\algnewcommand{\IIf}[1]{\State\algorithmicif\ #1\ \algorithmicthen}
\def\blx@maxline{77}
\title{The neighborhood lattice for encoding partial correlations in a Hilbert space\thanks{This work was supported by NSF grant IIS-1546098.}}
\author{Arash A. Amini, Bryon Aragam, Qing Zhou}
\providecommand{\Lc}{\mathcal{L}}
\providecommand{\Xc}{\mathcal{X}}
\providecommand{\Tc}{\mathcal{T}}
\providecommand{\Hil}{\mathcal{H}}
\DeclareMathOperator{\Span}{span}
\def\sep(#1,#2,#3){#1-#2-#3}
\providecommand{\Ss}{S^*}
\providecommand{\nhbdcoef}{\beta}
\providecommand{\nhbdset}{\mathcal{T}}
\providecommand{\dvec}{x}
\providecommand{\coef}{\beta}
\providecommand{\R}{\reals}
\providecommand{\tpose}{T}
\providecommand\independent{\protect\mathpalette{\protect\independenT}{\perp}}
\def\independenT#1#2{\mathrel{\rlap{$#1#2$}\mkern4mu{#1#2}}}
\def\given{\,\mid\,}
\providecommand{\norm}[1]{\Vert#1\Vert}
\definecolor{maroon}{rgb}{0.5, 0.0, 0.0}
\providecommand{\Cc}{\mathcal{C}}
\newcommand{\Pc}{\mathcal{P}}
\providecommand{\rv}{X}
\providecommand{\gv}{x}
\newcommand{\Fc}{\mathcal F}
\newcommand\Sigh{\widehat{\Sigma}}
\newcommand\Hc{\mathcal{H}}
\newcommand{\Kc}{\mathcal{K}}
\newcommand{\pa}{\Pi}
\newcommand{\Tcb}{\bm{\mathfrak{T}}} 
\begin{document}

	\maketitle
\begin{abstract}

	Neighborhood regression has been a successful approach in graphical and structural equation modeling, with applications to learning undirected and directed graphical models. We extend these ideas by defining and studying an algebraic structure called the neighborhood lattice based on a generalized notion of neighborhood regression. We show that this algebraic structure has the potential to provide an economic encoding of all conditional independence statements in a Gaussian distribution (or conditional uncorrelatedness in general), even in the cases where no graphical model exists that could ``perfectly'' encode all such statements. We study the computational complexity of computing these structures and show that under a sparsity assumption, they can be computed in polynomial time, even in the absence of the assumption of perfectness to a graph. On the other hand, assuming perfectness, we show how these neighborhood lattices may be ``graphically'' computed using the separation properties of the so-called partial correlation graph. 
    We also draw connections with directed acyclic graphical models and Bayesian networks.
    We derive these results using an abstract generalization of partial uncorrelatedness, called partial orthogonality, which allows us to use algebraic properties of projection operators on Hilbert spaces to significantly simplify and extend existing ideas and arguments. Consequently, our results apply to a wide range of random objects and data structures, such as random vectors, data matrices, and functions.
	
\end{abstract}

\section{Introduction}

Ascertaining the dependency structure of a system is a central task in machine learning and statistics. Accordingly, there are many tools for this including regression analysis, graphical models, factor analysis, and structural equation models (SEM). Unsurprisingly, these models are closely related \citep{wermuth1980,pourahmadi1999}, and the pros and cons of each approach are well-studied. At the same time, SEM-driven techniques have evolved as a useful tool for analyzing graphical models in high-dimensions \citep{geer2013,loh2014causal,aragam2016,drton2018algebraic}. As a result, understanding the relationship between the partial regression coefficients defined by a statistical model is an important problem in statistics. In this paper, we report some interesting properties of these partial regression coefficients and connect them to the properties of the underlying partial correlation graph (PCG). Our goals are twofold: (a) To introduce a new algebraic structure called the \emph{neighborhood lattice} for encoding dependency structures under minimal assumptions, and (b) In doing so, to highlight some new connections between SEM and graphical models. 

To motivate our results, consider the simple case of a Gaussian random vector $\rv=(\rv_{1},\ldots,\rv_{d})$. To understand the relationship between the variables in $\rv$, it is natural to inquire about the partial regression coefficients defined by regressing each $\rv_{j}$ onto the rest of the nodes $\rv_{-j}=\{\rv_{1},\ldots,\rv_{d}\} \setminus \{\rv_{j}\}$, the so-called \emph{neighborhood regression} problem \citep{meinshausen2006}. These coefficients encode a set of conditional independence (CI) relationships, namely, a zero coefficient implies that two nodes are conditionally independent given the rest of the variables. In this way the partial regression coefficients provide a convenient encoding of the dependency structure of the random vector $\rv$. Moreover, this structure is conveniently represented via the well-known Gaussian graphical model; see \cite{Lauritzen1996} for details. This procedure generalizes to various non-Gaussian models such as Ising models \citep{ravikumar2010}, exponential random families \citep{yang2015}, and semiparametric families \citep{Liu2009}. 

On the other hand, the (undirected) graphical model described above, in general, does not encode all the CI statements present in the underlying Gaussian distribution. The case where the distribution and the graph encode the same set of CI statements is known as the \emph{perfect} case. However, perfectness can be easily violated if the distribution is generated, say, from a true directed acyclic graph (DAG), sometimes referred to as a set of recursive SEMs. In these cases, there could be a DAG model perfect with respect to the distribution, in the sense of encoding exactly the same set of CI statements in the distribution, via the so-called $d$-separation in the DAG. Yet, there are still many distributions that are not covered by these two cases, i.e., not perfect with respect to any directed or undirected graphical model. 

In this work, we explore an alternative structure (i.e., other than a graph), which is defined by a generalization of classical neighborhood regression and can be used to economically encode \emph{all} of the CI statements in a Gaussian distribution, without assuming perfectness relative to a graph. 
More precisely, we wish to find all triplets $(j,k,T)$ such that $X_j\independent X_k \mid X_T$, where $T\subset \{1,\ldots,d\}\setminus\{j,k\}$. 
A complete list can be produced by regressing $\rv_{j}$ onto $\rv_{S}:= (\rv_k, k \in S)$ for general subsets $S\subset[d]_{j}:=\{1,\ldots,d\}\setminus\{j\}$, which we call \emph{generalized neighborhood regression}. 
Unfortunately, if we na\"ively consider all possible subsets, the problem explodes from a single neighborhood $[d]_{j}$ into $2^{d-1}$ neighborhoods. It is thus of interest to encode these {generalized neighborhood regression} problems more compactly.

One of the main contributions of this paper is to show that generalized neighborhood regression problems have a rich algebraic structure which we call the \emph{neighborhood lattice}, and that under certain natural sparsity assumptions, this structure in fact provides an economical encoding of \emph{all} of the CI relations in $\rv$. The neighborhood lattice is interesting in and of itself, both in how it provides an alternative to graphical models and in its intrinsic algebraic structure which has many useful properties. Among these useful properties, we will discuss potential computational savings when evaluating all possible conditional independence statements via the lattice (Section~\ref{sec:comp}) and also some connections with the complexity of learning DAG models (Section~\ref{sec:learning:directed}).

In the cases where the distribution is perfect to a conditional independence graph (CIG), we provide simple rules for graphical computation of the neighborhood lattices via the notion of graph separation in the CIG, thus showing how these algebraic representations connect with more familiar graphical models, when such models are possible. We develop all our results in the abstract setting of variables in a general Hilbert space $\Hil$, where the notion of independence is replaced with orthogonality, conditional independence is replaced with a corresponding notion of \emph{partial} (or \emph{conditional}) \emph{orthogonality}, and a CIG is replaced with a so-called partial correlation graph. Thus, our results extend beyond the Gaussian case as long as we replace conditional independence with the weaker notion of partial orthogonality. This connection between the axioms of conditional independence and more general operations such as Hilbert space projections is well-known \citep{pearl1988,Lauritzen1996,dawid2001separoids}, and our results are most naturally phrased in this language.

\subsection{Contributions of this work} 

At a high-level, we make the following specific contributions:
\begin{itemize}
\item We prove that a generalized notion of neighborhood regression has the structure of a complete, convex lattice (Theorem~\ref{thm:lattice}) that can be computed efficiently (Proposition~\ref{prop:lat:comp:complex}). 

\item The resulting lattice decomposition has a one-to-one correspondence with all the CI statements among $X_1\ldots,X_d$ (Theorem~\ref{thm:all:POs}). In the absence of perfectness,  we provide alternative conditions under which the lattice decomposition, hence all CI statements, can be computed in polynomial time (Theorem~\ref{thm:poly:N}).

\item Under the perfectness assumption, we provide two characterizations of these lattices in terms of the separation properties of the PCG (Theorem~\ref{thm:sep:char}) and its connected components (Theorem~\ref{thm:conn:comp}).
\end{itemize}
As far as we know, the lattice property of neighborhood regression has not been noted before in the literature. This useful property has interesting implications for the representation of CI relations (Section~\ref{sec:enumerate:POs}) as well as connections to the problem of learning DAG models from data (Section~\ref{sec:learning:directed}). As a bonus, we develop this theory in considerable generality, adopting the idea of replacing conditional independence with partial orthogonality from previous work in the graphical modeling literature. This also allows us to give largely self-contained proofs of all our results using operator-theoretic arguments. 

\subsection{Overview of the main results}

Let us give an overview of our results in the context of a motivating example.
Assume that we have a collection of random variables $X_1,\dots,X_d$ that are jointly Gaussian with covariance matrix $\Sigma \in \reals^{d \times d}$. Let $\beta_j(S)$ be the coefficient vector resulting from regressing $X_j$ onto $X_S$, which we refer to as SEM coefficients.  By varying $j$ and $S$, one can capture all CI relations in the \emph{sparsity pattern} (i.e. supports) of the resulting SEM coefficients $\beta_j(S)$. Moreover, one might be interested in these coefficients in their own right, not necessarily as a means to deriving CI relations.

Consider for example the inverse covariance matrix $\Gamma = \Sigma^{-1}$ shown in Figure~\ref{fig:perfect:pcg}(b). The sparsity pattern of $\Gamma$ can be thought of as the adjacency matrix of an undirected graph, the CIG associated with variables $X_1,\dots,X_d$ (Figure~\ref{fig:perfect:pcg}(c)). Consider for example, regressing $X_3$ on $\{X_{8},X_{9},X_{11},X_{12},X_{14}\}$:
\begin{align*}
	\beta_3(\{9,8,11,12,14\}) = \argmin_{\beta\, \in\, \reals^5} \ex[X_3 - (X_8,X_9,X_{11},X_{12},X_{14}) \beta ]^2.
\end{align*}
It turns out that in this regression, the coefficients of $X_8$ and $X_{11}$ are zero. That is, 
\begin{align*}
	\supp \big(\beta_3(\{9,8,11,12,14\})\big) = \{9,12,14\}.
\end{align*}
Which other variables besides those in $S = \{8,9,11,12,14\}$ can we include in this regression without the support changing? More generally, what are the other sets $S' \subset [d] \setminus \{j\}$ such that $\beta_j(S')$ has the same support as $\beta_j(S)$? Our main result (Theorem~\ref{thm:lattice}) shows that the set of such $S'$ has a rich algebraic structure, namely, it is a \emph{convex lattice} under the inclusion ordering. In particular, associated with every $j$ and $S$, there is a smallest and a largest set, $m_j(S)$ and $M_j(S)\subset [d] \setminus \{j\}$, respectively, such that regressing $X_j$ onto them  results in the same support for the regression coefficient vector. In this example, clearly $m_j(S) = \{9,12,14\}$, but the largest set is often nontrivial, and in this example it turns out to be $M_j(S) =  \{9,8,11,12,14,4,5,7,10,13,15\}$. A consequence of this result, for example, is that we can write
\begin{align}\label{eq:exa:reg}
	X_3 = \beta^*_9 X_{9} + \beta^*_{12} X_{12} + \beta^*_{14} X_{14} + \eps^*
\end{align}
where $\eps^*$ is independent of all the variables in $X_k, k \in M_j(S)$. Since $M_j(S)$ is the largest set by definition, there is no larger set with this property.

\begin{figure}
	\centering
	\begin{tabular}{ccc}
		\includegraphics[width=1.7in]{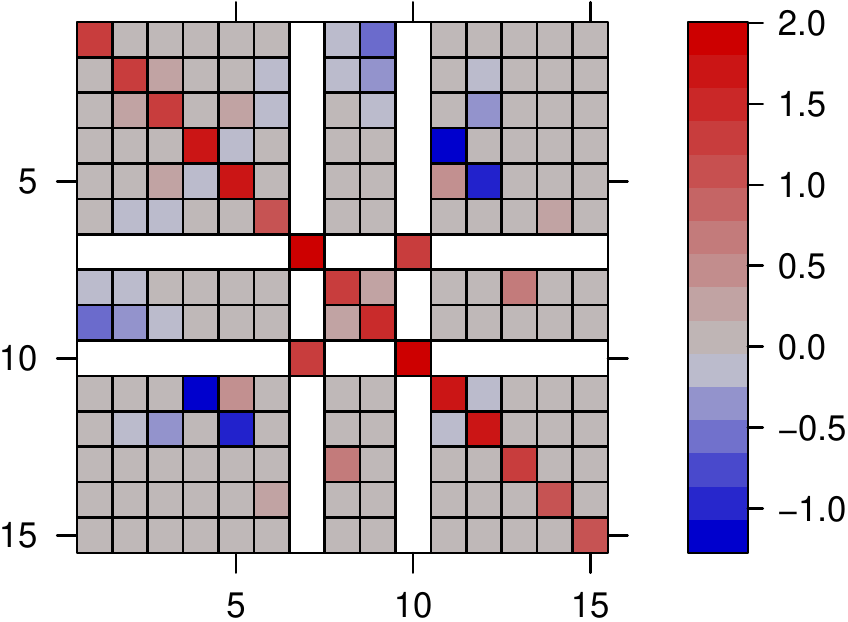} &
		\includegraphics[width=1.7in]{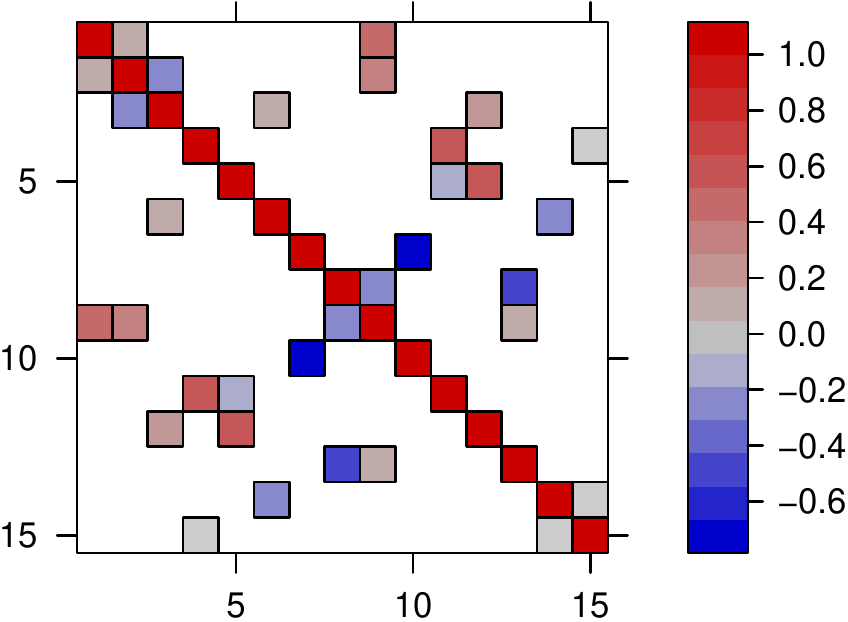} &
		\includegraphics[width=1.6in]{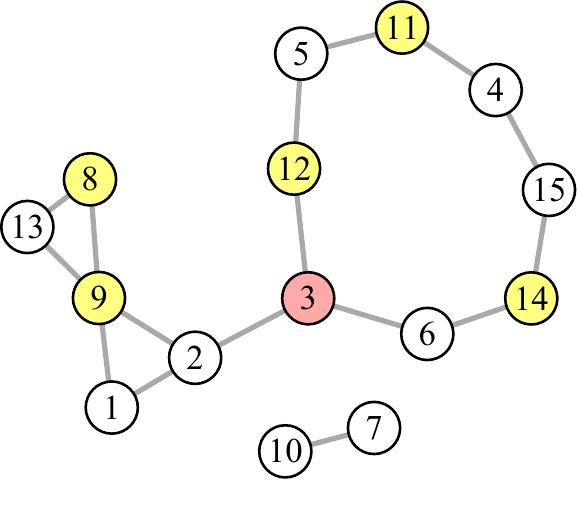} \\
		(a) & (b) & (c)
	\end{tabular}
	\caption{A Markov perfect example, with inverse covariance generated randomly: (a) covariance matrix,  (b) inverse covariance matrix, and (c) PCG. 
		In (c)  nodes are colored to help graphical computation of $\Tc_j(S)$ for $j=3$ and $S = \{8,9,11,12,14\}$.  See Examples~\ref{exa:perfect} and~\ref{exa:perfect:cont}.  
	}
	\label{fig:perfect:pcg}
\end{figure}

Any set $S'$ including $m_j(S)$ and contained in $M_j(S)$ has the property that $\supp(\beta_j(S)) = \supp(\beta_j(S'))$, and those are the only sets with this property. We denote this collection of sets as $\Tc_j(S)$ and write this in the interval notation
\begin{align}\label{eq:lat:examp}
	\begin{split}
	\Tc_j(S) &= [m_j(S),M_j(S)] \\
	&= \{S'\;:\; m_j(S) \subset S' \subset M_j(S)\} \\
	&= \big\{S'\;:\; \{9,12,14\} \subset S' \subset\{9,8,11,12,14,4,5,7,10,13,15\} \big\}.
	\end{split}
\end{align}
The regression coefficients in~\eqref{eq:exa:reg} will be the same for all subsets in $\Tc_j(S)$ and will be different for any subset outside $\Tc_j(S)$. The lattice $\Tc_j(S)$ compactly encodes many CI statements. In particular, we will show (Section~\ref{sec:enumerate:POs}) that lattice~\eqref{eq:lat:examp} encodes $X_i \independent X_j \mid X_{T}$ for any
\begin{align}
	i \in M_j(S) \setminus m_j(S) = \{8,11,4,5,7,10,13,15\}, \quad \text{and} \quad 
	T: \;  m_j(S) \subset T \subset M_j(S) \setminus \{i\}.  \end{align}
 Moreover, $\Tc_j(S)$ can be computed efficiently with $O(d)$ projections, a consequence of the convexity of the lattice (Section~\ref{sec:comp}).

We also observe that the neighborhood lattices $\Tc_j(S), S \subset [d] \setminus \{j\}$ provide a partition of the power set of $[d] \setminus \{j\}$, giving a complete picture of the relation of $X_j$ to other variables. We refer to this partition as the \emph{lattice decomposition} of node $j$ (Definition~\ref{defn:lat:decomp}) and show that calculating this decomposition provides us with all the CI statements involving node~$j$ (Section~\ref{sec:enumerate:POs}). We also provide conditions under which the decomposition can be computed in polynomial time (Section~\ref{sec:comp}).

The covariance matrix $\Sigma$ in this simple example was chosen to be perfect (see Section~\ref{sec:perfectness} for the definition), so that the reader can reason about the statements in the above discussion by inspecting the CIG. However, the lattice property of $\Tc_j(S)$ (i.e., Theorems~\ref{thm:lattice} and~\ref{thm:all:POs}) is true in general, without any perfectness assumption. It is known that without perfectness, one cannot infer all CI statements in the joint distribution from graph separation. Therefore, the neighborhood lattice is a more general representation for CI statements than a CIG. If $\Sigma$ is indeed perfect, we develop simple graphical rules to compute $\Tc_j(S)$ from the CIG associated with $\Gamma =\Sigma^{-1}$ (Theorems~\ref{thm:sep:char} and~\ref{thm:conn:comp}).

All these ideas carry out in the more general setting of abstract PCGs, with conditional independence replaced with the notion of \emph{partial orthogonality (PO)}. Although this general setting has appeared in the literature before, we briefly give an overview in Sections~\ref{sec:partial:orth}, \ref{sec:PCG:proj} and~\ref{sec:perfectness} using the language of projection operators which simplifies many arguments. For completeness, other aspects of the theory of abstract PO and PCGs will be developed in Appendix~\ref{app:detials}. 

\subsection{Related work}

Early work relating the properties of partial regression coefficients in a structural equation model to the independence properties of a Gaussian model include \citet{wright1921,wright1934,dempster1969,cramer1946}.
The general problem of encoding conditional independence (and more generally, partial uncorrelatedness) has a long history. For a historical summary and early work on this problem, we refer the reader to textbooks such as \cite{pearl1988} and \cite{Lauritzen1996}. Graphical models are the most common encoding, whereby conditional independence is encoded via separation properties in a graph. These properties can be axiomatized via the semi-graphoid and graphoid axioms, which has led to a rich literature on the axiomatic foundations of conditional independence; see \cite{pearl1988,studeny2006probabilistic} and the references therein. More recently, the literature has considered various extensions of traditional undirected and directed graphs to encode more general independence structures \citep[e.g.][]{evans2014markovian,heckerman2014variations,richardson2017nested,sadeghi2014markov,lauritzen2018unifying}. To the best of our knowledge, the lattice property of partial uncorrelatedness has not yet been explored, although the connection between orthogonal projections and the axioms of conditional independence is of course well-known \citep{Lauritzen1996,dawid2001separoids,whittaker2009graphical}.

\section{Preliminaries}

In this section, we review some preliminaries and background material that is useful for setting the stage of our general theory. The material on projections in Section~\ref{sec:proj:lattice} in particular is essential for the definitions and results in the sequel.

\subsection{Notation}
\paragraph{Graph notation.}
In an undirected graphical model for a random vector $\rv = (\rv_1,\dots,\rv_d)$, we identify each random variable $\rv_{j}$, $j=1,\ldots,d$, with a node in an undirected graph $G=([d],E)$, where $[d] = \{1,\dots,d\}$. 
Two nodes $i$ and $j$ are \emph{adjacent}, or \emph{neighbors}, if $(i,j)\in E$, in which case we write $i \sim j$, otherwise $i \nsim j$. A \emph{path} from $i$ to $j$ is a sequence $i = k_1,k_2,\ldots,k_{n-1},k_{n}=j \in[d]$ of distinct elements with $(k_{l},k_{l+1})\in E$ for each $l=1,\ldots,n-1$. 
Given two subsets $A,B\subset [d]$, a path connecting $A$ to $B$ is any path with $k_{1} \in A$ and $k_{n}\in B$.
A subset $C\subset [d]$ \emph{separates} $A$ from $B$, denoted by  $A-C-B$, if all paths connecting $A$ to $B$ intersect $C$ (i.e. $k_{l}\in C$ for some $1<l<n$), otherwise we write $ \neg (\sep(A,C,B))$. Implicit in this definition is that  $A,B$ and $C$ are disjoint.

\paragraph{Subset notation.} For any $A\subset [d]$, we denote $\rv_{A} = \{\rv_i:\; i \in A\}$. We also use the shorthand notations: $\{i\} = i$ and $\{i,j\} = ij$, $A \cup \{i\} = Ai$, $A \cup B = AB$ and so on.
In addition, we let $[d]_S = [d] \setminus S = \{1,\dots,d\} \setminus S$. Common uses of these notational conventions are: $[d]_j = [d] \setminus \{j\}$ and $[d]_{ij} = [d] \setminus ij = [d] \setminus \{i,j\}$.
For a matrix $\Sigma \in \reals^{d \times d}$, and subsets $A,B \subset [d]$, we use $\Sigma_{A,B}$ for the submatrix on rows and columns indexed by $A$ and $B$, respectively. Single index notation is used for principal submatrices, so that $\Sigma_{A} = \Sigma_{A,\,A}$. For example, $\Sigma_{i,j}$ is the $(i,j)$th element of $\Sigma$ (using the singleton notation), whereas $\Sigma_{ij} = \Sigma_{ij,\,ij}$ is the $2\times 2$ submatrix on $\{i,j\}$ and $\{i,j\}$. 
Similarly, $\Sigma_{Ai,Bj}$ is the submatrix indexed by rows $A \cup \{i\}$ and columns $B \cup \{j\}$.

\subsection{Graphical models}
\label{subsec:gm}
For completeness, we recall here some of the traditional definitions from the graphical modeling literature. This is mainly to provide a basis for comparison, and is not needed for most of the results we prove. A more thorough treatment can be found in \cite{koller2009,Lauritzen1996}. 

In the context of undirected graphs, there are three so-called \emph{Markov properties} of interest: $G$ satisfies the
\begin{itemize}
\item \emph{pairwise Markov property} if $(i,j)\notin E$ implies that $\rv_{i}\independent \rv_{j}\given \rv_{[d]_{ij}}$,
\item \emph{local Markov property} if each node is conditionally independent of all other variables given its neighbours,
\item \emph{global Markov property} if $\sep(A,C,B)$ implies that $\rv_{A}\independent \rv_{B}\given \rv_{C}$.
\end{itemize}
The notation $\rv_1 \independent \rv_2 \mid \rv_3$ means $\rv_1$ is (probabilistically) independent of $\rv_2$ conditioned on $\rv_3$. In general, these conditions are not equivalent and we have global$\implies$local$\implies$pairwise, however, for positive distributions, these conditions are in fact equivalent~\citep{Lauritzen1996}. 
A graph $G$ satisfying the pairwise Markov property is sometimes called a \emph{conditional independence graph} for the distribution of $\rv$.

The Markov properties go in one direction: The graphical criterion (i.e. separation) implies a probabilistic conclusion (i.e. conditional independence). The converse is not true in general; when it is, the graph $G$ is called \emph{perfect} with respect to the joint distribution of $\rv$. Specifically, $G$ is called \emph{perfect} if $\sep(A,C,B) \iff \rv_{A}\independent \rv_{B}\given  \rv_{C}.$ 

\subsection{The lattice of projections}
\label{sec:proj:lattice}

We will assume the reader is familiar with the basic theory of Hilbert spaces and their projections; a detailed introduction to these topics can be found in \cite{Farah2010,Blackadar2006}. For a (separable) Hilbert space $\Hil$, let $B(\Hil)$ be the space of bounded linear operators on $\Hil$. For an operator $P \in B(\Hil)$, let $\ran(P) := P \Hil := \{P x :\; x \in \Hil\}$ denote its range and $P^*$ its adjoint, defined via the relation $\ip{x,P^*y} = \ip{Px,y}$ for all $x,y \in \Hil$. Here and in the sequel, we use $\ip{\cdot,\cdot}_\Hil$ or the shorthand $\ip{\cdot,\cdot}$ to denote the inner product of $\Hil$. 

 We recall that $P \in B(\Hil)$ is an orthogonal projection if and only if it is self-adjoint and idempotent: $P^* = P = P^2$. The set of orthogonal projections in $B(\Hil)$, denoted as $\Pc(\Hil)$, is of particular importance in this paper. $\Pc(\Hil)$ can be partially ordered as follows:
\begin{lem}
    For $P,Q \in \Pc(\Hil)$, the following are equivalent:
    \begin{align*}
        (a) \; P = PQ, \quad (b)\; P = QP, \quad (c)\;\ran(P) \subset \ran(Q).
    \end{align*}
\end{lem}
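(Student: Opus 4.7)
The plan is to prove the triangle of implications $(a) \Rightarrow (b) \Rightarrow (c) \Rightarrow (a)$, since each step will use only the defining properties $P^* = P = P^2$ and $Q^* = Q = Q^2$. First, $(a) \Rightarrow (b)$ follows by taking adjoints on both sides of $P = PQ$: using self-adjointness of both projections, $P = P^* = (PQ)^* = Q^* P^* = QP$.

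For $(b) \Rightarrow (c)$, I would observe that if $P = QP$, then for every $x \in \Hil$ we have $Px = QPx \in \ran(Q)$, so $\ran(P) \subseteq \ran(Q)$. For $(c) \Rightarrow (a)$, I would first establish that $Q$ acts as the identity on $\ran(Q)$: this uses only idempotence, since any $y \in \ran(Q)$ can be written as $y = Qz$ and then $Qy = Q^2 z = Qz = y$. Combined with the hypothesis $\ran(P) \subseteq \ran(Q)$, which says $Px \in \ran(Q)$ for every $x$, this gives $QPx = Px$, i.e.\ $QP = P$. Taking adjoints once more transforms this into $PQ = P$, closing the loop.

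The argument is almost entirely formal and I do not anticipate any real obstacle. The one conceptual point worth keeping in mind is that the identity-on-range property of $Q$ rests purely on idempotence, not on self-adjointness; self-adjointness is used exclusively to translate between the ``left'' and ``right'' versions $(a)$ and $(b)$ of the same underlying containment $\ran(P) \subseteq \ran(Q)$. Consequently one could equivalently organize the proof as ``$(c)$ is equivalent to $P$ being fixed by $Q$ on the left, which by adjoining is equivalent to being fixed on the right,'' but the triangle format above is the cleanest route.
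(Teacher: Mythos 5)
Your proof is correct, and for the only part the paper actually comments on---the equivalence of $(a)$ and $(b)$ via taking adjoints and using self-adjointness---you use exactly the paper's argument; the remaining implications involving $(c)$ are left to the cited references, and your treatment of them (idempotence gives that $Q$ fixes its range, hence $\ran(P)\subset\ran(Q)$ is equivalent to $QP=P$) is the standard one. No gaps.
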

\noindent    When any of these conditions hold we write $P \le Q$.

The equivalence of (a) and (b) follows from self-adjointness of orthogonal projections. Note that when $P$ and $Q$ are both projections, $PQ$ is not necessarily a projection (unless $P$ and $Q$ commute).   The above ordering makes $\Pc(\Hil)$ into a complete lattice, that is, a partially ordered set $\mathcal{S}$ in which each subset has both a supremum (join) and an infimum (meet) in $\mathcal{S}$. For $P,Q \in \Pc(\Hil)$, we denote the supremum with $P \vee Q$ and the infimum with $P \wedge Q$.
We have
\begin{align*}
    P \wedge Q &=  \text{the projection onto $\ran(P) \cap \ran(Q)$} \\
    P \vee Q &= \text{the projection onto the closed linear span of $\ran(P) \cup \ran(Q)$.}
\end{align*}
We also let $P^\perp  := I - P$, the orthogonal complement of $P$. Note that $P \vee P^\perp = I$ (the identity operator) and $P \wedge P^\perp = \{0\}$. Also, $P \le Q$ iff $P^\perp \ge Q^\perp$. See for example~\citet[Section 5, p.~24]{Farah2010} and~\citet[Section~II.3.2, p.~78]{Blackadar2006} and the references therein.

We often consider the concrete example where $\Hil = L^2(\pr)$, the space of random variables (on a probability space with measure $\pr$) with finite second moments.  When applied to a random variable $X$, $P^{\perp}X=X-PX$ may be interpreted as the residual between $X$ and its projection $PX$. For this reason, when $x$ is a vector in an arbitrary Hilbert space $\Hil$ we will refer to $P^{\perp}x$ as the \emph{residual} of the projection $Px$.
We also need the notion of orthogonality of operators:
\begin{lem}\label{lem:op:orth}
    For $P,Q \in B(\Hil)$, the following are equivalent:
    \begin{center}
        (a) $\ip{Px,Qy} = 0,\; \forall x,y\in \Hil$, \quad (b) $P^*Q = Q^* P = 0$, 
        \quad (c) $\ran(P) \perp \ran(Q)$.
    \end{center}
\end{lem}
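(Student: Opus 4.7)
The plan is to prove the lemma by establishing the chain $(a) \Leftrightarrow (c)$ and $(a) \Leftrightarrow (b)$, each of which reduces to a one-line application of the defining property of the adjoint together with the fact that a vector is zero iff it is orthogonal to every element of $\Hil$.

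First I would dispatch $(a) \Leftrightarrow (c)$ essentially by definition: $\ran(P) \perp \ran(Q)$ means exactly that every element of $\ran(P) = \{Px : x \in \Hil\}$ is orthogonal to every element of $\ran(Q) = \{Qy : y \in \Hil\}$, which is the statement of (a). No computation is needed; this is just unwinding notation.

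Next I would prove $(a) \Leftrightarrow (b)$ using the adjoint relation $\ip{Px, Qy} = \ip{x, P^*Qy}$. If (a) holds, then $\ip{x, P^*Qy} = 0$ for all $x,y \in \Hil$. Fixing $y$ and letting $x$ range over $\Hil$ (in particular $x = P^*Qy$), we conclude $\|P^*Qy\|^2 = 0$, so $P^*Qy = 0$; since $y$ is arbitrary, $P^*Q = 0$. The identity $Q^*P = (P^*Q)^*$ then gives $Q^*P = 0$ as well. Conversely, if $P^*Q = 0$, then $\ip{Px,Qy} = \ip{x,P^*Qy} = \ip{x,0} = 0$ for all $x,y$, giving (a).

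There is no genuine obstacle here; the only subtle point worth flagging is that in a real Hilbert space the adjoint is a transpose, while in a complex Hilbert space it is a conjugate transpose, so the identity $Q^*P = (P^*Q)^*$ is what formally links the two equalities in (b). The proof does not rely on $P$ or $Q$ being projections, only on their boundedness (so that adjoints exist in $B(\Hil)$), which is why the lemma is stated for arbitrary $P,Q \in B(\Hil)$.
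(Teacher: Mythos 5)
Your proof is correct. The paper states this lemma as a standard fact without proof (citing the general references on Hilbert space operators), and your argument is exactly the canonical one that is being implicitly invoked: $(a)\Leftrightarrow(c)$ is a restatement of definitions since $\ran(P)=P\Hil$ and $\ran(Q)=Q\Hil$, and $(a)\Leftrightarrow(b)$ follows from the adjoint identity $\ip{Px,Qy}=\ip{x,P^*Qy}$ together with the choice $x=P^*Qy$ and the relation $Q^*P=(P^*Q)^*$. Nothing is missing, and you correctly observe that no projection structure is needed, only boundedness.
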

\noindent
When any of the conditions above hold, we call $P$ and $Q$ \emph{orthogonal} and denote this by $P \perp Q$.

\section{General results on neighborhood lattices}

In contrast to classical probability theory---which works with random variables and the notions of independence and correlation---we will develop our results in the abstract setting of a general Hilbert space $\Hil$. 
Since this extension might be unfamiliar to readers, in Section~\ref{sec:partial:orth} we give an outline of this theory. More details on this framework can be found in Appendix~\ref{app:detials}. 
In particular, the notion of partial correlation among variables is replaced with that of \emph{partial orthogonality} (cf. \eqref{eq:partial:orth})---which is well-defined in any Hilbert space--- and will play a prominent role in the rest of the paper. Later in section, we state our main results on neighborhood lattices and how they encode all partial orthogonality statements.

\subsection{Partial orthogonality}
\label{sec:partial:orth}

We start with a generalized definition of partial uncorrelatedness (or conditional independence in the case of Gaussian variables). Throughout, we write $\Hil^d = \{(\gv_1,\dots,\gv_d):\; \gv_i \in\Hil, \, i \in [d]\}$ for some positive integer\footnote{We assume $d < \infty$ for simplicity, though most of our results hold also for $d = \infty$.} $d$ and a separable Hilbert space $\Hil$. 
Fix a vector $\gv = (\gv_1,\dots,\gv_d) \in \Hil^d$, and for any $S \subset [d]$, let $P_S \in \Pc(\Hil)$ denote the projection onto the span of $\gv_S = \{\gv_i,\; i\in S\}$. For simplicity, we write $P_j = P_{\{j\}}$ to denote projection on the span of $\{x_j\}$, i.e., we drop the brackets for singleton sets.

\begin{defn}[Partial orthogonality]\label{defn:PO}
	For any disjoint triple of subsets $A,S,B \subset [d]$, we say that variables $x_A$ are partially orthogonal to $x_B$ given $x_S$ if 
 	\begin{align}\label{eq:partial:orth}
 		P_A P_S^\perp P_B = 0,
 	\end{align}
 	in which case we also write $P_A \perp P_B \mid P_S$ or $(A \perp B \mid S)$.
\end{defn}

We will also view~\eqref{eq:partial:orth} as a ternary operation among projection operators and say that ``$P_A$ is partially orthogonal to $P_B$ given $P_S$''. Since  $P_A P_S^\perp P_B = P_B P_S^\perp P_A$ due to self-adjointness of projections, Definition~\ref{defn:PO}  is symmetric in $A$ and $B$.  
Note the use of $\perp$ here to denote orthogonality in $\Hil$, as opposed to $\independent$ which is reserved for probabilistic independence. In particular, recall that $P_S^\perp := I - P_S$ is the orthogonal complement of $P_S$ (Section~\ref{sec:proj:lattice}). We refer to~\eqref{eq:partial:orth} as \emph{partial orthogonality}, or \emph{conditional orthogonality}, a generalization of the notion of partial uncorrelatedness of random variables. For $\Hil = L^2(\pr)$, we recover the usual notion for random variables:
\begin{exa}\label{exa:usual:rvs}
    For a probability measure $\pr$, let $\Hil = L^2(\pr)$, the space of square-integrable random variables: $X_i \in L^2(\pr)$ iff $\ex X_i^2 < \infty$. The inner product is $\ip{\rv, Y}_{L^2} = \ex XY$. Consider zero-mean random variables $\rv_1,\dots,\rv_d \in L^2(\pr)$. In this case, $P_S$ corresponds to the $L^2$ projection on the span of $\rv_{S}$, or \emph{regressing on} (covariates) $\rv_i, i \in S$ in statistical terms. Similarly, $P_S^\perp \rv_i$ is the \emph{residual} after regressing $\rv_i$ on $\rv_S$. 
    
    Now, fix some $S \subset [d]$. The usual partial correlation between $\rv_i$ and $\rv_j$ given $\rv_S$ is the correlation between the residuals $P_S^\perp \rv_i$ and $P_S^\perp \rv_j$, that is, $\ip{P_S^\perp \rv_i,P_S^\perp \rv_j}_{L^2}$. Thus, the partial correlation is zero iff $P_S^\perp \rv_i \perp P_S^\perp \rv_j$ (orthogonality in the $L^2$ sense). Since $\Span\{P_S^\perp \rv_j\} = \ran(P_S^\perp P_j)$, using Lemma~\ref{lem:op:orth}(c), the partial correlation is zero iff $P_S^\perp P_i \perp P_S^\perp P_j $. This in turn is equivalent to $P_i P_S^\perp P_j = 0$, by Lemma~\ref{lem:op:orth}(b). Thus, for random variables in $L^2$, the notion of partial orthogonality given in Definition~\ref{defn:PO} coincides with that of partial uncorrelatedness.
     \qed
\end{exa}

Let $\Sigma = ( \ip{\gv_i,\gv_j}_\Hil) \in \reals^{d \times d}$ be the \emph{Gram matrix} of the underlying variables $\gv = (\gv_j,\; j\in [d])$. Unless otherwise stated, we will assume the following from now on:
\begin{align}\label{asum:PSD:Sigma}
\Sigma := (\ip{\gv_i,\gv_j}_\Hil)  \succ 0. \tag{A1}
\end{align} 
The ternary relation \eqref{eq:partial:orth} among projections is also completely characterized by the Gram matrix $\Sigma$ as discussed in Appendix~\ref{sec:parorth:gram}. 

\subsection{Neighborhood regression}
\label{sec:nbhd:reg}

It is well-known that neighborhood regression provides an alternative characterization of the partial orthogonality relation~\eqref{eq:partial:orth}.  As will be shown in Section~\ref{sec:proof:enumerate:POs}, we can verify the relation $P_i \perp P_j \mid P_S$, by looking at the coefficients of the regression of $x_j$ onto $\{x_k,\; k \in Si\}$, as opposed to the residuals. 

In the literature on undirected graphical models, the neighborhood of a fixed node $\dvec_{j}$ is implicitly understood to be the set of all other variables, namely the collection $\dvec_{-j}$. Here, we extend the notion of neighborhood to any subset $S \subset [d]_j$, which is necessary for evaluating general conditional orthogonality statements, such as $x_i \perp x_j \mid x_S$. (See Section~\ref{sec:directed} for a discussion of how this general setup is needed for learning directed extensions of the PCG.) The \emph{neighborhood regression problem} for the node $\dvec_{j}$ and a subset $S\subset[d]_{j}$ is the problem of finding the linear projection of $\dvec_{j}$ onto $\dvec_{S}$:
\begin{defn}[SEM coefficients]
    \label{defn:nhbd}
    For any $S \subset [d]_j$, let 
    \begin{align}
    \label{eq:def:nhbd:regression}
    \coef_j(S) &:= \argmin_{\beta \,\in\, \R^d, \;\supp(\beta)\, \subset\, S}
    \norm{\dvec_{j} - \beta^T\dvec}_{\Hil}^{2}.
    \end{align}
    We call $\coef_j(S)$ the SEM coefficients for variable $j$ regressed on the variables $S$.  Here, $\beta^T x = \sum_{j=1}^d \beta_j x_j$ for any $\beta \in \reals^d$ and $x \in \Hil^d$.
\end{defn}

In Section~\ref{sec:proof:enumerate:POs}, we will show that to test $P_i \perp P_j \mid  P_S$ for all $S \subset [d]_{ij}$, it is enough to have $\{\beta_j(T):\, T \subset [d]_j\}$, motivating the study of such collections  (cf. Proposition~\ref{prop:proj:Sigma:gen}) .   It turns out that these neighborhood regression problems have rich algebraic properties which will be explored in the next section. 

\subsection{Neighborhood lattice}\label{sec:nbhd:lattice}
We now state our main results. We start by introducing the the main object of study in the paper, which we call the \emph{neighborhood lattice}.

\begin{defn}[Neighborhood lattice]
    \label{defn:nhbd:lattice}
    For any $S \subset [d]_j$, define a collection of subsets by
    \begin{align}\label{eq:lattice:proj:def}
    \Tc_j(S) = \{ T \subset [d]_j:\; P_T P_j = P_S P_j\}.
    \end{align}
    
\end{defn}
\noindent
In other words, for any $j$, $\nhbdset_j(S)$ is the collection of candidate sets $T\subset[d]_{j}$ such that the projection of $\dvec_{j}$ onto $\{\dvec_i, \, i\in T\}$ is invariant.  Definition~\ref{defn:nhbd:lattice} is somewhat abstract, though it highlights the algebraic nature of the collection $\Tc_j(S)$ and the sufficiency of the projection operators $\{P_S, S \subset [d]\}$ in defining it. Note that $\Tc_j(S)$ as given in~\eqref{eq:lattice:proj:def} is well-defined even when the Gram matrix $\Sigma$ is rank deficient.   In order to see the usefulness of this definition, assume in addition that $\Sigma$ is (strictly) positive definite. Then we have the following alternative representations of $\Tc_j(S)$ (Lemma~\ref{lem:equiv:lattice:defs} in Section~\ref{sec:details:nbhd:lat}):
\begin{align}
\begin{split}
\nhbdset_j(S) &= \{T \subset[d]_j: \; \nhbdcoef_j(T) = \nhbdcoef_j(S) \} \\
&= \big\{T \subset[d]_j: \; \supp(\nhbdcoef_j(T)) = \supp(\nhbdcoef_j(S)) \big\}.
\label{eq:lattice:alt}
\end{split}
\end{align}
By definition, $ \supp(\beta_j(S)) \subset S$. We are primarily interested in cases where $\supp(\beta_j(S))$ is much smaller than $S$. In addition, we would like to know all the other sets $T$ for which $\supp(\beta_j(T))$ is the same as this small set. 

Our goal in this paper is to study the algebraic properties of $\nhbdset_j(S)$, for which representation~\eqref{eq:lattice:proj:def} is very helpful. Throughout, we fix $j \in [d]$ and $S \subset [d]_j$. Our next result shows that $\nhbdset_j(S)$ is indeed a lattice, as suggested by the name. Recall that a complete lattice is a partially ordered set, or \emph{poset} for short, in which all subsets have both a supremum (join) and an infimum (meet)~\cite[Section 3.3]{stanley1997enumerative}.  We also need the following definition: Let $(\Pc, \le)$ be a poset and $\Cc$ a subposet of $\Pc$. We say that $\Cc$ is \emph{convex} (in $\Pc$) if $z \in \Cc$ whenever $x < z < y$ with $x,y \in \Cc$. A closed interval $[x,y] := \{z \in \Pc:\; x \le z \le y\}$ is an example of a convex subposet~\cite[Section 3.1]{stanley1997enumerative}.

\begin{thm}\label{thm:lattice}
	Under~\eqref{asum:PSD:Sigma},
    $\Tc_j(S)$, ordered by inclusion, is a complete lattice. In particular, $\nhbdset_j(S)$ has unique minimal and maximal elements, which we denote by $m_j(S)$ and $M_j(S)$, respectively. Moreover,  $\Tc_j(S)$ is convex as a subposet of $2^{[d]_j}$. \end{thm}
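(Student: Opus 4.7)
The plan is to show that $\Tc_j(S)$ coincides with a closed interval in the Boolean lattice $(2^{[d]_j}, \subset)$, from which every assertion of the theorem follows immediately. The first step is to simplify the defining identity $P_T P_j = P_S P_j$. Since $P_j$ has rank one, $P_j v$ is always a scalar multiple of $\gv_j$, so the operators $P_T P_j$ and $P_S P_j$ agree iff they agree on $\gv_j$; since $P_j \gv_j = \gv_j$, this reduces to $P_T \gv_j = P_S \gv_j$. Writing $y := P_S \gv_j$ and $r := \gv_j - y = P_S^\perp \gv_j$, this equation is in turn equivalent to the conjunction of
\begin{align*}
y \in \Span\{\gv_k :\; k \in T\} \quad\text{and}\quad \ip{r, \gv_k} = 0 \text{ for all } k \in T,
\end{align*}
obtained by splitting the projection $P_T \gv_j$ into its range and orthogonality conditions.

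Next, I would decouple the two requirements. Define
\begin{align*}
M_j(S) := \{k \in [d]_j :\; \ip{r, \gv_k} = 0\},
\end{align*}
so the second condition above becomes $T \subset M_j(S)$; observe that $S \subset M_j(S)$ since $r \perp \gv_k$ for every $k \in S$ by construction of $y$. For the first condition, assumption~\eqref{asum:PSD:Sigma} forces $\{\gv_k :\; k \in [d]_j\}$ to be linearly independent, so $y$ admits a unique expansion $y = \sum_{k \in [d]_j} c_k \gv_k$. Setting $m_j(S) := \supp(c)$, the containment $y \in \Span\{\gv_k :\; k \in S\}$ together with uniqueness gives $m_j(S) \subset S$, and likewise $y \in \Span\{\gv_k :\; k \in T\}$ iff $m_j(S) \subset T$.

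Combining these two equivalences yields $\Tc_j(S) = \{T :\; m_j(S) \subset T \subset M_j(S)\}$, i.e., the closed interval $[m_j(S), M_j(S)]$ in $(2^{[d]_j}, \subset)$. I would then verify that the endpoints actually lie in $\Tc_j(S)$: for $m_j(S)$ this is immediate from its construction together with $m_j(S) \subset S$, and for $M_j(S)$ it follows from the inclusion $S \subset M_j(S)$ noted above. Because an order interval in a complete lattice is automatically a convex, complete sublattice (with meets and joins given by intersections and unions, taken inside the interval), the proof is complete.

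The main obstacle is isolating precisely where the positive-definiteness hypothesis~\eqref{asum:PSD:Sigma} is needed. The characterization of $M_j(S)$ and the reduction $P_T P_j = P_S P_j \iff P_T \gv_j = P_S \gv_j$ go through regardless of (A1); linear independence is used solely to guarantee uniqueness of the expansion of $y$, and hence the existence of a well-defined smallest element $m_j(S)$. Without this, one could have several incomparable minimal sets $T$ satisfying $y \in \Span\{\gv_k :\; k \in T\}$, and $\Tc_j(S)$ would cease to be a lattice in general.
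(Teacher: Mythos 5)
Your proof is correct, and it takes a genuinely different route from the paper's. The paper argues operator-algebraically: it shows closure under arbitrary intersections by introducing the range projection $Q$ of $P_S P_j$ and exploiting the order structure of the projection lattice, shows closure under arbitrary unions via the characterization $T \in \Tc_j(S) \iff T \supset m_j(S)$ and $P_T P_{m_j(S)}^\perp P_j = 0$ together with Lemma~\ref{lem:sup:P:Q}, and then proves convexity by a separate multiplication argument; only afterwards does it deduce the interval representation~\eqref{eq:lattice:interval:rep}. You instead go straight for the interval: reducing $P_T P_j = P_S P_j$ to $P_T \gv_j = P_S \gv_j$, splitting the projection identity into the range condition $y \in \Span\{\gv_k: k\in T\}$ and the orthogonality condition $r \perp \gv_k$ for $k \in T$, and decoupling these into $\supp(c) \subset T$ and $T \subset \{k : \ip{r,\gv_k} = 0\}$. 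This is more constructive --- it delivers explicit descriptions of the endpoints ($m_j(S) = \supp(\beta_j(S))$, matching Remark~\ref{rem:activeset}, and $M_j(S)$ via the residual correlations, matching~\eqref{eq:explit:pairwise:SEM} and Proposition~\ref{prop:proj:Sigma:gen}(d)) and makes all three conclusions of the theorem fall out of a single structural identity. What the paper's approach buys in exchange is generality: its argument never maps projections back to index sets via linear independence, so it survives a rank-deficient Gram matrix, infinitely many variables, and the abstract von Neumann algebra setting of Appendix~\ref{sec:abs:lattice}, whereas your argument, as you correctly note, relies on~\eqref{asum:PSD:Sigma} to obtain the unique expansion of $y$ and hence a unique minimal element. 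Your diagnosis of exactly where~\eqref{asum:PSD:Sigma} enters is accurate and is the same point the paper makes when it notes that without positive definiteness one must remain at the level of projections.
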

Any finite nonempty lattice is in fact complete, however, the proof we give in Section~\ref{sec:proof:thm:lattice} also establishes the result for an infinite collection of variables $\{x_i\}$. 
 $\Tc_j(S)$ being convex in $2^{[d]_j}$ means the following: For every two subsets $S_1,S_2 \in \Tc_j(S)$, and any $S' \subset [d]_j$, if $S_1 \subset S' \subset S_2$, then $S' \in \Tc_j(S)$. Thus, using the interval notation, we can represent the neighborhood lattice~as 
 \begin{align}\label{eq:lattice:interval:rep}
    \Tc_j(S) = [m_j(S), M_j(S)]  =\{ A \in 2^{[d]_j} :\; m_j(S) \subset A \subset M_j(S)\}.
 \end{align}
  In other words, $\Tc_j(S)$ is specified by its minimum and maximum elements, and it is actually isomorphic to the lattice of the power set $2^{[r]}$ with $r=|M_j| - |m_j|$. 
  In Section~\ref{sec:graphical:comp}, we describe the abstract sets $m_j(S)$ and $M_j(S)$ in terms of the PCG $G$ (Theorems~\ref{thm:sep:char} and~\ref{thm:conn:comp}), which gives a useful semantic interpretation in terms of separation properties of a graph.  Theorem~\ref{thm:lattice} is in fact a special case of a much more abstract result discussed in Appendix~\ref{sec:abs:lattice}.

\begin{exa}\label{exa:perfect}    Consider the covariance matrix $\Sigma$ shown in Figure~\ref{fig:perfect:pcg}(a), with dimension $d = 15$.   The corresponding PCG is plotted in Figure~\ref{fig:perfect:pcg}(c). 
    For $j = 3$ and $S = \{9,8,11,12,14\}$, the minimal set is $m_j(S) = \{9,12,14\}$, and maximal set is 
    $M_j(S) = \{4,5,7,8,9,10,11,12,\allowbreak13,14,15\}$. These can be easily read from the graph as will be described in Section~\ref{sec:graphical:comp}. Since the lattice can be represented as $\Tc_j(S) = [m_j(S), M_j(S)]$, we know for example that $\{9,12,14,5\}$ and $\{9,12,14,10,13\}$ both belong to $\Tc_j(S)$. Note that these two sets are not comparable in $2^{[d]_j}$, i.e., $\Tc_j(S)$ is not a \emph{chain} (or totally ordered set) in $2^{[d]_j}$. In this case, $\Tc_j(S)$ contains $2^{|M_j| - |m_j|} = 2^{8} = 256$ subsets, out of a total possible $|2^{[15]_3}| = 2^{14}$. Regressing $j=3$ onto any of these 256 subsets results in the same SEM coefficients.
\end{exa}

An alternative way to view $\Tc_j(S)$ is as a collection of subsets of $\{x_i:\; i \in [d]_j\}$  that have the same explaining power as $S$ for $x_j$. This view can be further advanced by removing the reference to the set $S$. A node $j$ defines an equivalence relation among subsets of $[d]_j$, where $S,T \subset [d]_j$ are equivalent iff $P_S P_j = P_T P_j$. We then say that $S$ and $T$ have the same explaining power for $\{j\}$. This relation partitions $2^{[d]_j}$ into equivalence classes, each of which is a complete lattice according to Theorem~\ref{thm:lattice}. For future reference, let us give this partition a name:
\begin{defn}[Lattice decomposition]\label{defn:lat:decomp}
	For any $j \in [d]$, we call the partition of $2^{[d]_j}$ into neighborhood lattices, the (neighborhood) \emph{lattice decomposition} of node $j$, denoted as $\Tcb_j$.  More precisely, each element $\Tc_j \in \Tcb_j$ is an interval $\Tc_j = [m_j,M_j]$ as in Theorem~\ref{thm:lattice}, and we have $\bigcup \Tcb_j = 2^{[d]_j}$ and $\Tc_j \cap \Tc_j' = \emptyset$ for distinct $\Tc_j, \Tc_j' \in \Tcb_j$.
\end{defn}

\begin{rem}
	\label{rem:activeset}
    It is not hard to see that $m_j(S)=\supp(\nhbdcoef_j(S))$, that is, $m_j(S)$ represents the so-called \emph{active set} when regressing $\gv_{j}$ onto $\gv_{S}$, which is of course of interest in regression and graphical models. Thus, there is a one-to-one correspondence between the neighourbohood lattices $\Tc_{j}(S)$, the equivalence classes defined above, and the possible active sets for predicting $x_{j}$. Furthermore, a consequence of Theorem~\ref{thm:lattice} is that there will be a largest set $M_j(S)$ possibly larger than the $S$ we started with, regressing onto which has the same active set. Regressing onto any set strictly larger than $M_j(S)$ will change the active set.
\end{rem}

\subsection{Enumerating all partial orthogonality statements}\label{sec:enumerate:POs}

An immediate application of Theorem~\ref{thm:lattice} is that it provides an economical way of enumerating all partial orthogonality (PO) statements that hold among variables $\dvec_i, \, i\in [d]$ without any perfectness assumption. It is enough to focus on statements of the form $P_i \perp P_j \mid P_T$ for any $T \subset [d]_{ij}$ which we represent compactly as $(i \perp j \mid T)$, implicitly assuming that $T$ does not include $\{i,j\}$. We further fix $j$, i.e., restrict attention to all PO statements involving a particular node $j$.  As mentioned earlier, when the variables are prefect w.r.t. to a graphical model (either undirected or directed), inferring the underlying PCG is often a good economical way of representing all PO statements. However, without assuming perfectness, one has to verify each statement $(i \perp j \mid T)$ for all $i \neq j$ and all $T \subset [d]_{ij}$. Theorem~\ref{thm:lattice} shows that once we identify a neighborhood lattice $\Tc_j$, we can obtain many of these statements for free. More precisely, we have:
\begin{prop}\label{prop:all:POs:for:Tj}
	Let $\Tc_j = [m_j,M_j]$ be a neighborhood lattice for node $j$. Assume that $T$ and $\{i\}$ are disjoint subsets of $[d]_j$, and  $T \cup\{i\} \in \Tc_j$. Then $(j \perp i \mid  T)$ if and only if $i \in M_j \setminus m_j$.
\end{prop}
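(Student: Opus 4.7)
The plan is to combine three ingredients: (i) the regression-based characterization of partial orthogonality foreshadowed in Section~\ref{sec:nbhd:reg}, (ii) Remark~\ref{rem:activeset}, which identifies $m_j(S)$ with $\supp(\beta_j(S))$, and (iii) the alternative description~\eqref{eq:lattice:alt} of $\Tc_j(S)$ in terms of equal supports of SEM coefficients. Together these should reduce the statement to a routine unpacking.

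First, I would record the elementary geometric observation that, because $T \cup \{i\} \in \Tc_j = [m_j, M_j]$, we automatically have $m_j \subset T \cup \{i\} \subset M_j$, and in particular $i \in M_j$. Thus the claimed equivalence $(j \perp i \mid T) \Longleftrightarrow i \in M_j \setminus m_j$ reduces to showing $(j \perp i \mid T) \Longleftrightarrow i \notin m_j$.

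Second, I would invoke (or state and prove as a short lemma) the regression characterization of partial orthogonality: for disjoint $\{i\}$ and $T$ in $[d]_j$,
\[
P_j P_T^\perp P_i \;=\; 0 \quad\Longleftrightarrow\quad \bigl[\beta_j(T \cup \{i\})\bigr]_i \;=\; 0,
\]
i.e.\ the $i$-coordinate of the SEM coefficient vector obtained by regressing $x_j$ onto $x_{T \cup \{i\}}$ vanishes. This is the standard equivalence between partial uncorrelatedness and the vanishing of a regression coefficient, transported to the Hilbert-space setting via the projection operators of Section~\ref{sec:proj:lattice}; the quickest derivation writes out the normal equations for $\beta_j(T \cup \{i\})$ and uses that $P_{T \cup \{i\}}^\perp x_j$ is orthogonal to $x_k$ for $k \in T$, isolating the $i$-coefficient as a scalar multiple of $\ip{P_j P_T^\perp P_i \,\cdot,\cdot}_\Hil$ acting on appropriate unit vectors. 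This is precisely the fact promised in Section~\ref{sec:nbhd:reg} (to be proven in Section~\ref{sec:proof:enumerate:POs}), so in the final write-up I would simply cite it.

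Third, I would combine this with the lattice structure. Because $T \cup \{i\} \in \Tc_j$, the alternative characterization~\eqref{eq:lattice:alt} gives $\supp(\beta_j(T \cup \{i\})) = \supp(\beta_j(m_j)) = m_j$, where the second equality is Remark~\ref{rem:activeset}. Hence $[\beta_j(T \cup \{i\})]_i = 0$ if and only if $i \notin m_j$, which by step two is equivalent to $(j \perp i \mid T)$. Combined with $i \in M_j$ from the first step, this yields $(j \perp i \mid T) \Longleftrightarrow i \in M_j \setminus m_j$. The only nontrivial step is the regression characterization in step two; the rest is bookkeeping once one has the lattice interval representation~\eqref{eq:lattice:interval:rep} in hand.
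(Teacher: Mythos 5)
Your proposal is correct and follows essentially the same route as the paper's proof: reduce to $(j\perp i\mid T)\iff i\notin m_j$ using $i\in M_j$ (automatic from $T\cup\{i\}\subset M_j$), apply the regression characterization $(j\perp i\mid T)\iff[\beta_j(T\cup\{i\})]_i=0$ (Proposition~\ref{prop:proj:Sigma:gen}(d)), and use the lattice property to identify $\supp(\beta_j(T\cup\{i\}))$ with $m_j$. No gaps; the one ingredient you defer to a citation is exactly the one the paper also cites.
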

\noindent
See Section~\ref{sec:proof:enumerate:POs} for the proofs of the results in this section. 

 Proposition~\ref{prop:all:POs:for:Tj} allows us to enumerate all PO statements assuming that we have the full lattice decomposition $\Tcb_j$ (Definition~\ref{defn:lat:decomp}): 
 \begin{thm}\label{thm:all:POs}
 	Let $\Tcb_j=\big\{[m_j^\ell,M_j^\ell]:\ell=1,\ldots,K_j\big\}$. Then $j \perp i \mid  T$ if and only if
 	there exists a unique $k\in [K_j]$ such that $$i \in M^k_j \setminus m^k_j, \quad T \in [m_j^k,M_j^k \setminus \{i\}].$$
 \end{thm}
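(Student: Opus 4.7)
My plan is to reduce Theorem~\ref{thm:all:POs} to Proposition~\ref{prop:all:POs:for:Tj} by using the partition property of the lattice decomposition $\Tcb_j$. The key observation is that since $\Tcb_j$ partitions $2^{[d]_j}$ and the set $Ti := T \cup \{i\}$ lies in $2^{[d]_j}$ (using the implicit assumption that $T \cap \{i,j\} = \emptyset$), there is a unique index $k \in [K_j]$ such that $Ti \in \Tc_j^k = [m_j^k, M_j^k]$. This $k$ is the candidate in the theorem statement, and most of the work is translating the containment $Ti \in [m_j^k, M_j^k]$ into the asserted containment $T \in [m_j^k, M_j^k \setminus \{i\}]$.

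For the forward direction, assuming $j \perp i \mid T$, I would let $k$ be the unique index above and invoke Proposition~\ref{prop:all:POs:for:Tj} for $\Tc_j^k$ to conclude $i \in M_j^k \setminus m_j^k$. I then need to argue $T \in [m_j^k, M_j^k \setminus \{i\}]$: the upper bound $T \subset M_j^k \setminus \{i\}$ follows because $T \subset Ti \subset M_j^k$ together with $i \notin T$; the lower bound $m_j^k \subset T$ follows because $m_j^k \subset Ti$ and, since $i \notin m_j^k$ (as $i \in M_j^k \setminus m_j^k$), removing $i$ from $Ti$ to recover $T$ does not drop any element of $m_j^k$.

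For the backward direction, assuming the existence of such a $k$, I would verify that $Ti \in \Tc_j^k$ by noting $m_j^k \subset T \subset Ti$ (lower bound) and $Ti \subset (M_j^k \setminus \{i\}) \cup \{i\} = M_j^k$ since $i \in M_j^k$ (upper bound), and then apply the converse of Proposition~\ref{prop:all:POs:for:Tj} together with $i \in M_j^k \setminus m_j^k$ to conclude $j \perp i \mid T$. The uniqueness of $k$ is then inherited directly from the fact that $Ti$ belongs to exactly one block of the partition $\Tcb_j$: any two candidates $k, k'$ satisfying the conditions would both have $Ti \in \Tc_j^k \cap \Tc_j^{k'}$, forcing $k = k'$ by Definition~\ref{defn:lat:decomp}.

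I do not anticipate a genuine obstacle here, since the substantive content sits inside Proposition~\ref{prop:all:POs:for:Tj}; the theorem is essentially a bookkeeping consequence packaged through the partition structure of $\Tcb_j$. The only care needed is in handling the offset by $\{i\}$ (switching between $T$ and $Ti$), and in using the disjointness assumption $i \notin T$ implicit in the notation $(j \perp i \mid T)$ so that the interval endpoints line up correctly.
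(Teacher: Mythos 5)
Your proposal is correct and follows essentially the same route as the paper: locate the unique block $[m_j^k,M_j^k]$ of the partition $\Tcb_j$ containing $Ti$, translate that membership into $T\in[m_j^k,M_j^k\setminus\{i\}]$, and invoke Proposition~\ref{prop:all:POs:for:Tj}. Your treatment is in fact slightly more careful than the paper's, since you explicitly note that the lower bound $m_j^k\subset T$ requires $i\notin m_j^k$ (supplied by the conclusion $i\in M_j^k\setminus m_j^k$), a step the paper glosses over.
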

 
In short, once we have the lattice decomposition $\Tcb_j$ for node $j$, then all the PO statements involving node $j$ can be listed as follows:
\begin{align}\label{eq:all:POs:listed}
	j \perp i \mid  T \quad \text{for all} \quad 
	i \in M^k_j \setminus m^k_j, \quad T \in [m^k_j,M^k_j \setminus \{i\}], \quad k=1,\dots,K_j. \end{align}
The total number of PO statements in~\eqref{eq:all:POs:listed} is
\begin{align}\label{eq:POs:count}
	\sum_{k=1}^{K_j} \big(|M_j^k| - |m_j^k|\big)\, 2^{|M_j^k|- |m_j^k| - 1}, 
\end{align}
where, for example, $|M_j^k|$ is the cardinality of set $M_j^k$. (Note that the total number of all possible PO triples involving a fixed node $j$ is $(d-1) 2^{d-2}$.)
According to Theorem~\ref{thm:all:POs}, there is no double-counting in~\eqref{eq:all:POs:listed} and all the PO statements (involving $j$) are accounted for. Thus, the complexity of enumerating all PO statements, in the absence of perfectness assumption, boils down to the complexity of computing decomposition~$\Tcb_j$. In Section~\ref{sec:comp}, we show that computing an individual lattice $\Tc_j$ has  polynomial complexity and we also give sufficient conditions under which computing $\Tcb_j$ could be done in polynomial time. Thus, in the absence of a graphical representation, the lattice decomposition provides an alternative ``economical encoding'' of PO statements with potential for substantial savings.

\subsection{Computational complexity}\label{sec:comp}

\newcommand{\Scand}[0]{candidates\xspace}
\newcommand{\minimals}[0]{minimals\xspace}
\let\oldReturn\Return
\renewcommand{\Return}{\State\oldReturn}

\begin{algorithm}[t]
	\begin{algorithmic}[1]
		\Function{computeLattice}{$j$, $S$, $\Sigma$} 
		\State $\beta_j(S) \gets$ \textproc{computeSEMcoeff}($j$, $S$, $\Sigma$)
		\State $m \gets \supp(\beta_j(S))$. \Comment{This is $m_j(S)$.}
		\State $M \gets S$
		\State $A \gets [d] \setminus (S\cup\{j\})$
		\While{ A $\neq \emptyset$ } 
		\State $k \gets $ (pop first element off $A$)
		\State $P \gets M \cup \{k\}$ \Comment{Proposal set}
		\State $c \gets$ \textproc{computeSEMcoeff}($j$, $P$, $\Sigma$)
		\IIf{$\supp(c) = m$} $M \gets P$    		\EndWhile           
		\Return{$[m,M]$}
		\EndFunction
		
		\Function{computeSEMcoeff}{$j$,$S$,$\Sigma$}        
		\Return $\beta_j(S)$  which is $= \Sigma_S^{-1} \Sigma_{S,j}$ on $S$ and $0$ otherwise.
		\EndFunction
	\end{algorithmic}
	\caption{Compute the lattice $\Tc_j(S)$ for a given $j$ and $S \subset [d]_j$.\label{alg:lattice:comp}}
\end{algorithm}

Theorem~\ref{thm:lattice}, combined with the observations in Remark~\ref{rem:activeset}, suggests an intuitive algorithm for efficiently computing the entire neighborhood lattice $\Tc_j(S)$. Once we have the active set $m_j(S)$, it is enough to compute $M_j(S)$ by sequentially adding the rest of the nodes and either including or rejecting them based on whether the active set is changed. The validity of this approach follows from the convexity of the lattice. Once $m_j(S)$ and $M_j(S)$ are computed, the entire lattice is given by~\eqref{eq:lattice:interval:rep}. This procedure is outlined formally in Algorithm~\ref{alg:lattice:comp}.
Since this algorithm only requires $d -1 - |m_j(S)| = O(d)$ projections, we have the following perhaps surprising result:
\begin{prop}\label{prop:lat:comp:complex}
For any $j$ and $S\subset[d]_{j}$, it is possible to compute $\Tc_j(S)$ with $O(d)$ projections. 
\end{prop}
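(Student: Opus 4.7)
The plan is to establish correctness of Algorithm~\ref{alg:lattice:comp} and then read off the projection count. By Theorem~\ref{thm:lattice} and the interval representation~\eqref{eq:lattice:interval:rep}, it suffices to compute the pair $(m_j(S), M_j(S))$, since $\Tc_j(S) = [m_j(S),M_j(S)]$. The key structural fact that will be exploited is the \emph{convexity} of $\Tc_j(S)$ as a subposet of $2^{[d]_j}$; this is precisely what makes a single-pass greedy sweep succeed.

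First I would justify the line that sets $m \gets \supp(\beta_j(S))$ by invoking Remark~\ref{rem:activeset}, which identifies $m_j(S)$ with the active set of the regression of $x_j$ onto $x_S$. Computing $\beta_j(S)$ is one projection, giving $m_j(S)$ at no extra cost.

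Next, I would prove by induction on the iterations of the \textbf{while} loop the invariant $M \in \Tc_j(S)$, or equivalently, $m_j(S) \subset M \subset M_j(S)$. At initialization $M = S$ and $S \in \Tc_j(S)$ holds trivially from Definition~\ref{defn:nhbd:lattice}. For the inductive step, let $P = M \cup \{k\}$ denote the proposal set with $k \in [d] \setminus (S \cup \{j\})$. If $k \in M_j(S)$, then $m_j(S) \subset M \subset P \subset M_j(S)$, so by convexity $P \in \Tc_j(S)$, hence $\supp(\beta_j(P)) = m$ by~\eqref{eq:lattice:alt}, and the algorithm accepts, updating $M \leftarrow P \in \Tc_j(S)$. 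Conversely, if $k \notin M_j(S)$, then $P \not\subset M_j(S)$, so $P \notin [m_j(S),M_j(S)] = \Tc_j(S)$, which gives $\supp(\beta_j(P)) \neq m$, and the algorithm rejects. In either branch the invariant is preserved. Upon termination every $k \in [d] \setminus (S \cup \{j\})$ with $k \in M_j(S)$ has been added while every $k \notin M_j(S)$ has been excluded; since $S \subset M_j(S)$ from the outset and $j \notin M_j(S) \subset [d]_j$, this yields $M = M_j(S)$.

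The complexity claim is then immediate. Line~2 costs one projection; each pass through the \textbf{while} loop costs one additional projection in its call to \textsc{computeSEMcoeff}; and the loop runs at most $|[d] \setminus (S \cup \{j\})| = d - 1 - |S| \le d$ times, giving $O(d)$ projections in total. The pair $(m,M)$ returned, combined with~\eqref{eq:lattice:interval:rep}, specifies $\Tc_j(S)$ in full. The main obstacle in this argument is not the bookkeeping but the inductive step for the rejected case: it crucially uses that any superset of $m_j(S)$ inside $[m_j(S),M_j(S)]$ lies in $\Tc_j(S)$, i.e., the convexity of the lattice from Theorem~\ref{thm:lattice}. Without convexity, a greedy one-at-a-time test could fail to capture elements of $M_j(S)$ that only ``activate'' in combination with others.
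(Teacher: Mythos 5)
Your proposal is correct and follows essentially the same route as the paper: the paper's proof is simply ``immediate by inspection of Algorithm~\ref{alg:lattice:comp},'' with the surrounding text noting that the validity of the greedy sweep rests on the convexity of $\Tc_j(S)$ and that the projection count is $O(d)$. You have merely made the inspection explicit via a loop invariant, correctly identifying convexity (via the interval representation~\eqref{eq:lattice:interval:rep} and the support characterization~\eqref{eq:lattice:alt}) as the ingredient that makes the one-pass accept/reject test sound.
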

Since each projection can be computed in polynomial time (i.e. $O(d^3)$), a consequence of Proposition~\ref{prop:lat:comp:complex} is that  $\Tc_j(S)$ can be computed in polynomial time.

Now consider the question of computing the full lattice decomposition $\Tcb_j$ which is needed in general to enumerate all PO statements by Theorem~\ref{thm:all:POs}. This can be done recursively, as detailed in Algorithm~\ref{alg:lattice:decomp}. The key in this algorithm is step~\ref{step:finding:uncovered:S} where given a partial decomposition, say, $\Tcb_j' = \{\Tc_j^1,\dots,\Tc_j^K\}$, one needs to find a set $S$ that is not covered by $\Tcb_j'$, i.e., $S \notin \bigcup \Tcb_j' = \bigcup_{\ell=1}^K \Tc_j^\ell$. If no such set exists, we conclude that $\Tcb_j'$ covers the power set (i.e., $\bigcup \Tcb_j' = 2^{[d]_j}$), hence it is in fact the full lattice decomposition and the algorithm terminates. For a general set system $\Tcb_j'$, performing step~\ref{step:finding:uncovered:S} is NP-hard. However, since in our case the underlying lattices are mutually disjoint, the problem can be solved in polynomial time:
\begin{lem}\label{lem:decide:set:system}
	Given a set system $\Tcb_j' =\{\Tc_j^1,\dots,\Tc_j^K\}$ where $\Tc_j^\ell = [m_j^\ell,M_j^\ell] \subset 2^{[d]_j}$ for all $\ell=1,\dots,K$ and $\Tc_j^\ell \cap \Tc_j^{\ell'} = \emptyset$ for $\ell \neq \ell'$, there is a polynomial-time algorithm to decide whether $\Tcb_j'$ covers the power set $2^{[d]_j}$ and if not produce a certificate (i.e., a set not covered by $\Tcb_j'$). The algorithm requires at most $O( d^2 K)$ set operations.
\end{lem}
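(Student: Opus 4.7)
The approach is a divide-and-conquer algorithm that recurses on one coordinate at a time. At each stage, given a pairwise-disjoint system of intervals in a power set $2^{[d']}$ (initially $d' = d-1$), fix any element $v$ of the ambient set and classify each $\Tc_j^\ell = [m_j^\ell, M_j^\ell]$ as Type A (if $v \in m_j^\ell$), Type B (if $v \notin M_j^\ell$), or Type C (if $v \in M_j^\ell \setminus m_j^\ell$). Under the bijective projections of the ``with $v$'' and ``without $v$'' halves of $2^{[d']}$ onto $2^{[d'] \setminus \{v\}}$, each $\Tc_j^\ell$ descends to at most two sub-intervals: Type A contributes $[m_j^\ell \setminus \{v\}, M_j^\ell \setminus \{v\}]$ to the ``with $v$'' sub-system only; Type B contributes $[m_j^\ell, M_j^\ell]$ to the ``without $v$'' sub-system only; and Type C contributes $[m_j^\ell, M_j^\ell \setminus \{v\}]$ to both. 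Pairwise-disjointness is inherited by each projected sub-system since the projections are bijections on their respective halves.

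The key quantitative observation is that disjointness yields $|\bigcup_\ell \Tc_j^\ell| = \sum_\ell 2^{c(\ell)}$ with $c(\ell) := |M_j^\ell| - |m_j^\ell|$, so $\Tcb_j'$ covers $2^{[d]_j}$ if and only if $N := \sum_\ell 2^{c(\ell)}$ equals $2^{d-1}$; this yields the decision part immediately. To produce a certificate when $N < 2^{d-1}$, observe that the sub-problem counts
\begin{align*}
N_w := \sum_{\ell \in A} 2^{c(\ell)} + \sum_{\ell \in C} 2^{c(\ell)-1}, \qquad N_{\neg w} := \sum_{\ell \in B} 2^{c(\ell)} + \sum_{\ell \in C} 2^{c(\ell)-1}
\end{align*}
satisfy $N_w + N_{\neg w} = N$. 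Since $N < 2 \cdot 2^{d-2}$, at least one of $N_w, N_{\neg w}$ is strictly less than $2^{d-2}$, so the corresponding sub-system fails to cover its half of the smaller power set. We recurse on a deficient side; if the recursion returns an uncovered set $T'$ in $2^{[d'] \setminus \{v\}}$, we lift it to $T' \cup \{v\}$ (if the deficient side was the ``with $v$'' one) or to $T'$ (otherwise), which is uncovered in $2^{[d']}$ by the projection bijection.

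The recursion bottoms out when the ambient set is empty, where the sole candidate $\emptyset$ is uncovered iff $N = 0$; overall correctness follows by induction on $d'$. For complexity, the recursion depth is at most $d - 1$, and at each level we classify and project each of the $\le K$ intervals and sum $K$ powers of two of bit length $O(d)$, yielding $O(d^2 K)$ set operations. The main steps to verify, both relatively light, are (a) that pairwise-disjointness is preserved under the projections and (b) that the counting identity $N_w + N_{\neg w} = N$ holds; both follow cleanly from the bijectivity of the ``with $v$'' and ``without $v$'' projections onto $2^{[d'] \setminus \{v\}}$, so no circular dependence on Theorem~\ref{thm:lattice} is needed.
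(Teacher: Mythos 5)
Your proof is correct, and it is built on the same two pillars as the paper's argument (a greedy descent due to Brendan McKay): (i) pairwise disjointness makes the number of covered sets exactly computable as $\sum_\ell 2^{|M_j^\ell|-|m_j^\ell|}$, and (ii) this count can be localized to halves of the subset lattice so that a deficiency is chased down one coordinate at a time. The realizations differ in a way worth recording. The paper only ever commits elements to be \emph{inside} the candidate set: it grows $S$ greedily while tracking $Q(S)$, the number of covered supersets of $S$, and consequently needs a special terminal case ($Q(S)=1$ at depth $r-1$, where either $S$ or the ground set is the certificate). Your recursion branches symmetrically on ``$v\in T$'' versus ``$v\notin T$'' and always descends into a half whose count is strictly deficient, so the base case degenerates to $N=0$ and no special case arises; the price is that you must carry the projected interval system explicitly (the Type A/B/C bookkeeping), whose disjointness you correctly justify via the two bijections onto $2^{[d']\setminus\{v\}}$. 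Your two-sided branching is in fact slightly more robust: the paper's assertion that some $i_1$ satisfies $Q(\{i_1\})<2^{r-1}$ whenever $Q(\emptyset)<2^{r}$ fails in the corner case where the only uncovered set is contained in the current $S$ (e.g.\ when $\emptyset$ alone is uncovered), a gap your scheme avoids automatically. Your accounting also shows the certificate can be found with only $O(dK)$ set operations (one pivot element per level rather than trying all $r$), comfortably within the stated $O(d^2K)$ bound.
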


\begin{algorithm}[t]
	\begin{algorithmic}[1]
		\State Set $\Tc_j^1 \gets \text{computeLattice}(j, [d]_j,\Sigma)$
		\State Initialize $\Tcb_j \gets \{\Tc_j^1\}$, $K \gets 1$ and $\text{powerSetNotCovered} \gets \text{True}$.
		\While{ powerSetNotCovered }
		\State Find a set $S \subset [d]_j$ not covered by $\Tcb_j = \{\Tc_j^1,\dots,\Tc_j^K\}$, that is, $S \notin \bigcup_{\ell=1}^K \Tc_j^\ell$. \label{step:finding:uncovered:S}
		\If {no such set $S$ exists}
		\State $\text{powerSetNotCovered} \gets \text{False}$.
		\Else
		\State $\Tc_j^{K+1} \gets \text{computeLattice}(j, S,\Sigma)$.
		\State $\Tcb_j \gets \{\Tc_j^1,\dots,\Tc_j^K, \Tc_j^{K+1}\}$.
		\State $K \gets K+1$.
		\EndIf
		\EndWhile

	\end{algorithmic}
	\caption{Compute the lattice decomposition $\Tcb_j$ for a given $j$~\label{alg:lattice:decomp}.}
\end{algorithm}

A set operation in the statement of Lemma~\ref{lem:decide:set:system} (and Theorem~\ref{thm:poly:N} below) involves the computation of the size of at most two set differences. We refer to the proof in Section~\ref{sec:proof:comp} for more details.
Since the maximum $K$ achieved in Algorithm~\ref{alg:lattice:decomp} could  potentially be much smaller than $2^{d-1}$, the overall procedure could result in substantial savings relative to the naive approach of checking all the $2^{d-1}$ possible subsets. In general, we have the following result for the complexity of Algorithm~\ref{alg:lattice:decomp}, and by proxy that of enumerating all PO statements:

\begin{thm}\label{thm:poly:N}
	Assume that lattice decomposition $\Tcb_j$ contains $K_j$ lattices. Then, $\Tcb_j$ can be computed using at most $O(K_j(d^2K_j \;\text{set}  + d \;\text{projection}))$ operations. In particular, if $|m_{j}(S)|\le k$ for all $S\subset[d]_{j}$, then all partial orthogonality statements for the $j$th node can be computed in polynomial time, with at most $O(d^{2k+2} \,\text{set}  + d^{k+1} \,\text{projection})$ operations. 
\end{thm}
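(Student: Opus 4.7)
The plan is to obtain both bounds by a careful accounting of the cost of Algorithm~\ref{alg:lattice:decomp}, using Proposition~\ref{prop:lat:comp:complex} for the lattice-computation step and Lemma~\ref{lem:decide:set:system} for the uncovered-set step, and then to bound $K_j$ in terms of $k$ for the second part.

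First, I would fix some iteration of the outer \textbf{while} loop in Algorithm~\ref{alg:lattice:decomp} and suppose that the current partial decomposition has $\ell$ lattices ($1 \le \ell \le K_j$). In this iteration we perform one invocation of the covering test of Lemma~\ref{lem:decide:set:system}, costing $O(d^2 \ell) = O(d^2 K_j)$ set operations, followed (if an uncovered $S$ is produced) by one call to \textproc{computeLattice}($j,S,\Sigma$). By Proposition~\ref{prop:lat:comp:complex}, this call uses $O(d)$ projections (each of which internally solves a linear system in dimension at most $d$, so $O(d^3)$ arithmetic per projection, though that cost is absorbed into the ``projection'' unit). Since Algorithm~\ref{alg:lattice:decomp} halts exactly when all $K_j$ lattices have been discovered, and each iteration adds one new lattice (by the way $S$ was chosen and by Theorem~\ref{thm:lattice}), the loop runs at most $K_j$ times, yielding the first bound $O\bigl(K_j(d^2 K_j\ \text{set} + d\ \text{projection})\bigr)$.

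Next, for the polynomial-time conclusion under the assumption $|m_j(S)| \le k$ for every $S \subset [d]_j$, I would bound $K_j$. By Remark~\ref{rem:activeset}, each lattice $\Tc_j^\ell \in \Tcb_j$ has minimum element $m_j^\ell = \supp(\beta_j(S))$ for any representative $S \in \Tc_j^\ell$, and by the alternative characterization in~\eqref{eq:lattice:alt} together with the disjointness of distinct lattices, the minima $m_j^1,\dots,m_j^{K_j}$ are all distinct subsets of $[d]_j$. Under our assumption each $m_j^\ell$ has cardinality at most $k$, so
\[
K_j \ \le\ \sum_{i=0}^{k} \binom{d-1}{i} \ =\ O(d^k).
\]
Substituting this into the first bound gives $O(d^k \cdot (d^2 \cdot d^k\ \text{set} + d\ \text{projection})) = O(d^{2k+2}\ \text{set} + d^{k+1}\ \text{projection})$, which is polynomial in $d$ for fixed $k$. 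Combining this with Theorem~\ref{thm:all:POs}, which shows that the full list of PO statements involving node $j$ is read off from $\Tcb_j$ via~\eqref{eq:all:POs:listed}, completes the proof.

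The main obstacle I anticipate is not any of the individual cost estimates but rather the bookkeeping that joins them together cleanly: in particular, justifying that the uncovered-set subroutine has the claimed cost per call (which is exactly the content of Lemma~\ref{lem:decide:set:system}, so it can be invoked as a black box), and verifying that each outer iteration genuinely adds a fresh lattice so that the loop terminates after at most $K_j$ iterations. The latter follows because \textproc{computeLattice}($j,S,\Sigma$) returns the unique lattice of $\Tcb_j$ containing $S$ (by the partition property of Definition~\ref{defn:lat:decomp}), and by construction $S$ lies outside the previously collected lattices. Everything else is arithmetic.
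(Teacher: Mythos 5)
Your proof is correct and follows essentially the same route as the paper: combine Proposition~\ref{prop:lat:comp:complex} and Lemma~\ref{lem:decide:set:system}, observe that each iteration of Algorithm~\ref{alg:lattice:decomp} adds a fresh lattice so the loop runs at most $K_j$ times, and bound $K_j$ by the number of distinct minimal elements under the bounded-active-set assumption. Your count $\sum_{i=0}^{k}\binom{d-1}{i}$ is in fact slightly more careful than the paper's $\binom{d-1}{k}$, but both give $O(d^k)$ and the rest is identical.
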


\noindent
In other words, as long as no active set for $x_{j}$ has more than $k$ elements, the computation of the lattice decomposition is polynomial in $d$. The proof of Theorem~\ref{thm:poly:N} follows by combining Proposition~\ref{prop:lat:comp:complex} and Lemma~\ref{lem:decide:set:system}, and noting that under the \emph{bounded active set} assumption,  the total number of lattices is bounded as $K_j \le\binom{d-1}{k}\le d^{k}$.
Whether there are other assumptions, besides bounded active set, that could guarantee a polynomial number $K_j$ of lattices is an interesting open question.

For the interested reader, \texttt{R} code implementing Algorithms~\ref{alg:lattice:comp} and~\ref{alg:lattice:decomp}, as well as the graphical computation of Section~\ref{sec:graphical:comp}, is available at~\cite{reg_lattice_comp}. 

\begin{exa}
	Continuing with the example covariance matrix $\Sigma$ shown in Figure~\ref{fig:perfect:pcg}(a), we apply Algorithm~\ref{alg:lattice:decomp} to compute the lattice decomposition $\Tcb_j$ for node $j=3$. The decomposition turns out to have $K_j=319$ lattices. Tables~\ref{tab:covered:dist} and~\ref{tab:mcount:dist} contain some statistics about this decomposition, namely, how many sets are covered by each lattice and the sizes of their minimum element (i.e., set $m$). For example, Table~\ref{tab:covered:dist}, shows that there are 5 lattices in the decomposition that cover $512$ sets each. Similarly, there are $21$ lattices whose minimal element is a set of size $5$. In this example, the maximum active set size is $k=5$. We note that the bound $\binom{d-1}{k} = \binom{14}{5} = 2002$ on the number $K_j$ of lattices  is quite conservative ($K_j = 319 \ll 2002$) which is attributed to the fact that many lattices in the decomposition have minimal elements of size smaller than $k$. The total number of (valid) PO statements for $j=3$, as given by~\eqref{eq:POs:count}, is 62,592. This is the number of all PO statements involving node $j=3$ that hold in this model, out of the $14 \cdot 2^{13} = 114,688$ possible such PO statements.
\end{exa}

\begin{table}[t]
	\centering
	\begin{tabular}{lrrrrrrrrrr|r}
		& & & & & & & & & & & Total \\
		\hline
		Number of sets covered & 4 & 8 & 16 & 32 & 64 & 128 & 256 & 512 & 1024 & 2048 & $16384$\\ 
		\hline
		Number of lattices & 112 &  40 &  60 &  34 &  38 &  19 &   8 &   5 &   2 &   1 & $319$\\ 
		\hline
	\end{tabular}
	\caption{The distribution of the number of sets covered by each lattice $\Tc = [m,M]$, i.e.  $2^{|M|-|m|}$, for the setup of Example~\ref{exa:perfect}.}\label{tab:covered:dist}
\end{table}
\begin{table}[t]
	\centering
	\begin{tabular}{rrrrrrr}
		\hline
		size of $m$ & 0 & 1 & 2 & 3 & 4 & 5 \\ 
		\hline
		number of lattices &   1 &  12 &  60 & 134 &  91 &  21 \\ 
		\hline
	\end{tabular}
	\caption{The distribution of the size of the minimal element of the lattice $\Tc = [m,M]$, i.e. set $m$, for the setup of Example~\ref{exa:perfect}.}\label{tab:mcount:dist}
\end{table}

\section{Graphical computation with PCG}\label{sec:graphical:comp}

So far we have developed properties of the neighborhood lattice and its computation,
without assuming the existence of a perfect graph $G$ for the Gram matrix $\Sigma$. 
In this general situation, the lattice decomposition encodes all partial orthogonality statements.
When a perfect PCG $G$ exists, all PO statements can be read off from graph separation in $G$.
Then it is expected that one can characterize the lattices from the graph $G$. In this section, we will show how to
compute a neighborhood lattice from a perfect PCG. 
We begin by generalizing the usual notion of a PCG to the Hilbert space setting, and then extend the Markov perfectness defined in Section~\ref{subsec:gm} based on this generalization.

\subsection{Abstract partial correlation graphs}\label{sec:PCG:proj}

For any $S \subset [d]$ let $P_S \in \Pc(\Hil)$ denote the projection onto the span of $\gv_S = \{\gv_i,\; i\in S\}$ and recall that $[d]_{ij} = \{1,\dots,d\} \setminus \{i,j\}$. 

\begin{defn}[Pairwise $\Hil$-Markov, PCG]\label{def:pairwise:L2:abs:PCG}
	\label{defn:pcg}
	We say that $\gv$ satisfies the \emph{pairwise $\Hil$-Markov property} w.r.t. $G$ if
	\begin{align}\label{eq:pairwise:L2}
	\text{$i \nsim j$ in $G$} \iff P_i P_S^\perp P_j = 0, \;\text{for}\; S = [d]_{ij},
	\end{align}
	in which case $G$ is called a \emph{partial correlation graph} of $\gv$.
\end{defn}

Recalling Example~\ref{exa:usual:rvs} (Section~\ref{sec:partial:orth}), where $\Hil = L^2(\pr)$ and we are dealing with random variables $X_1,\dots,X_d \in L^2(\pr)$,  Definition~\ref{defn:pcg} defines the PCG as the graph where a missing edge between a pair of nodes $(i,j)$ corresponds to zero partial correlation between $\rv_i$ and $\rv_j$ given the rest of the variables $\rv_k, k \in [d]_{ij}$. This is the usual notion of the PCG for a collection of random variables. Definition~\ref{defn:pcg} thus generalizes this notion to represent partial orthogonality (Definition~\ref{defn:PO}) in a Hilbert space.

In Example~\ref{exa:usual:rvs}, it is well-known that if $\rv =(\rv_1,\dots,\rv_d)$ has a Gaussian distribution, then the PCG defined above is in fact a CIG. However, the PCG is defined for any (joint) distribution on $\rv$ whose marginals have finite second moments, and in general it may not correspond to a CIG. (It is usually sparser than a CIG.) In Appendix~\ref{sec:pcg:examples}, we give more concrete examples of the abstract PCG of Definition~\ref{def:pairwise:L2:abs:PCG} which go beyond the familiar case of Example~\ref{exa:usual:rvs}.

\subsection{$\Hil$-Markov Perfectness}\label{sec:perfectness}
Let us now extend Definition~\ref{defn:pcg}. In analogy with the global Markov property in the context of CIGs, we can define a global notion of the $\Hil$-Markov property with respect to a graph: 
\begin{defn}[Global $\Hil$-Markov]\label{defn:global:Markov:L2}	Given the setup of Definition~\ref{defn:pcg}, we say that $\gv\in\Hil^d$ satisfies the global $\Hil$-Markov property w.r.t. $G$ if 
	\begin{align}\label{eq:defn:global:Markov:L2}
	\text{ $S$ separates $A$ and $B$ in $G$} \implies P_A  P_S^\perp P_B = 0.
	\end{align} 
\end{defn}

\noindent
In the special case $\Hil = L^2(\pr)$ (cf.~Example~\ref{exa:usual:rvs}), this reduces to the global Markov property for an undirected graphical model, and hence Definition~\ref{defn:global:Markov:L2} naturally generalizes this property to general Hilbert spaces.

We now introduce the notion of $\Hil$-Markov perfectness, which is the PCG counterpart of the notion of Markov perfectness in undirected graphical models. The definition of the global $\Hil$-Markov property in Definition~\ref{defn:global:Markov:L2} requires that graph separation implies partial orthogonality. Perfectness upgrades this implication to hold both ways.

\begin{defn}[Perfectness]\label{defn:perfectness}	We say that $\gv$ is globally $\Hil$-Markov perfect w.r.t. $G$ if 
	\begin{align*}
	\text{ $S$ separates $A$ and $B$ in $G$} \iff P_A  P_S^\perp P_B = 0.
	\end{align*}    
\end{defn}
\noindent
We often abbreviate ``globally $\Hil$-Markov perfect w.r.t. $G$'' and just refer to $\gv$ as perfect or imperfect. 

Markov perfectness is useful when making statements regarding the separation of nodes in the PCG, in which case there is a one-to-one correspondence between separation on the graph and the relation $P_A \perp P_B \mid P_S$ among projections. 

\begin{rem}[Perfectness of $\Sigma$]
	There is no need to specify $G$  in Definition~\ref{defn:perfectness}, since it is implied by $x$, due to the uniqueness of PCGs according to Definition~\ref{def:pairwise:L2:abs:PCG}. 	 Since PCGs and  $\Hil$-Markov properties can be equivalently characterized by the Gram matrix $\Sigma = (\ip{x_i,x_j}_\Hil)$ as discussed in Appendix~\ref{sec:parorth:gram}, we can equivalently talk about perfectness of a Gram matrix $\Sigma$. The corresponding graph is also (uniquely) implied in this case, given by the sparsity pattern of $\Sigma^{-1}$; see~\eqref{eq:inv:Gram} and Lemma~\ref{lem:Gram:char:pair}. 
\end{rem}

\subsection{Graphical characterization}
Under the perfectness assumption (Definition~\ref{defn:perfectness}), we can characterize the elements of $\Tc_j(S)$ in terms of the separation in the underlying PCG. For any set $S\subset[d]_{j}$, define a new set by
\begin{align}
\label{eq:Sstar}
\Ss:=\bigcap \{T \subset S:\; T \; \text{separates $j$ and $S \setminus T$}\}.
\end{align}
Thus, $\Ss$ is the smallest subset of $S$ that separates $j$ and $S \setminus \Ss$.  See Lemma~\ref{lem:sep:1} (Section~\ref{sec:seperation}) for the validity of this interpretation. Our next result gives the following characterization of the minimal set $m_j(S)$, the maximal set $M_j(S)$, and the entire lattice, via graph separation:
\begin{thm}\label{thm:sep:char} Assume $\Hil$-Markov perfectness.
    Fix $j \in [d]$, $S \subset [d]_j$, and define $S^{*}$ by \eqref{eq:Sstar}.
    Let $E_j(\Ss) = \{k:\; \text{$\Ss$ separates $k$ and $j$}\}$. Then, 
    \begin{itemize}
        \item[(a)] $m_j(S) = \Ss$.
        \item[(b)] $M_j(S) = \Ss \cup E_j(\Ss)$.
        \item[(c)] $\Tc_j(S) = \{\Ss \cup T:\; T \subset E_j(\Ss)\}$.
    \end{itemize}
\end{thm}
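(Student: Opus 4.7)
The theorem reduces to verifying (a) and (b), since (c) then follows immediately: Theorem~\ref{thm:lattice} gives $\Tc_j(S) = [m_j(S), M_j(S)]$, and once (a) and (b) identify these endpoints with $\Ss$ and $\Ss \cup E_j(\Ss)$ (which are disjoint), every element of the interval admits a unique expression as $\Ss \cup T$ with $T \subset E_j(\Ss)$. Both (a) and (b) will rest on one geometric fact: whenever $\Ss \subset U \subset [d]_j$, the operator $P_U - P_{\Ss}$ is an orthogonal projection with
\[
\ran(P_U - P_{\Ss}) \;=\; \Span\{P_{\Ss}^\perp x_k : k \in U \setminus \Ss\},
\]
obtained by decomposing $y \in \ran(P_U)$ as $\sum_{i \in \Ss} a_i x_i + \sum_{k \in U \setminus \Ss} b_k x_k$ and applying $P_{\Ss}^\perp$. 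In combination with Lemma~\ref{lem:op:orth}, this lets me convert an invariance of the form $P_U P_j = P_{\Ss} P_j$ into orthogonality between $\ran(P_j)$ and the residual vectors $\{P_{\Ss}^\perp x_k\}_{k \in U \setminus \Ss}$, which perfectness then trades for graph separation.

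\textbf{Part (a).} I will show $\Ss \subset m_j(S)$ and $m_j(S) \subset \Ss$ separately. For the first inclusion, $m_j(S) \in \Tc_j(S)$ yields $P_{m_j(S)}^\perp P_j = P_S^\perp P_j$; and for any $i \in S \setminus m_j(S)$, the relation $P_i \le P_S$ gives $P_i P_S^\perp = 0$, hence $P_i P_{m_j(S)}^\perp P_j = 0$, i.e.\ $i \perp j \mid m_j(S)$. By perfectness, $m_j(S)$ separates $i$ from $j$ in the PCG, so $m_j(S)$ is one of the sets in the intersection~\eqref{eq:Sstar} defining $\Ss$, proving $\Ss \subset m_j(S)$. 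For the reverse inclusion it suffices to prove $\Ss \in \Tc_j(S)$, since $m_j(S)$ is the minimum of $\Tc_j(S)$. By Lemma~\ref{lem:sep:1}, $\Ss$ itself separates $j$ from $S \setminus \Ss$; perfectness then gives $P_j P_{\Ss}^\perp P_{S \setminus \Ss} = 0$, which, by Lemma~\ref{lem:op:orth}, says $\ran(P_j) \perp \ran(P_{\Ss}^\perp P_{S \setminus \Ss})$. Applying the geometric fact with $U = S$ identifies this space with $\ran(P_S - P_{\Ss})$, so $(P_S - P_{\Ss}) P_j = 0$, i.e.\ $P_{\Ss} P_j = P_S P_j$, proving $\Ss \in \Tc_j(S)$.

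\textbf{Parts (b), (c), and main obstacle.} By part (a) together with convexity of $\Tc_j(S)$, $M_j(S)$ is the largest $T \supset \Ss$ satisfying $P_T P_j = P_{\Ss} P_j$. For any such $T$, the same geometric analysis gives
\[
P_T P_j = P_{\Ss} P_j \;\iff\; \ran(P_j) \perp \Span\{P_{\Ss}^\perp x_k : k \in T \setminus \Ss\} \;\iff\; P_j P_{\Ss}^\perp P_k = 0 \text{ for all } k \in T \setminus \Ss,
\]
and perfectness converts the last condition to ``$\Ss$ separates $k$ from $j$'' for every $k \in T \setminus \Ss$, i.e.\ $T \setminus \Ss \subset E_j(\Ss)$. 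Maximizing over such $T$ gives $M_j(S) = \Ss \cup E_j(\Ss)$, and (c) follows from (a), (b), Theorem~\ref{thm:lattice}, and disjointness as noted at the start. The whole argument turns on the range identity for $P_U - P_{\Ss}$; after that step, perfectness and Lemma~\ref{lem:op:orth} translate mechanically between projections and separation. A minor bookkeeping point is that the intersection in~\eqref{eq:Sstar} is itself a separator (Lemma~\ref{lem:sep:1}), which is what legitimizes the phrase ``$\Ss$ separates $j$ from $S \setminus \Ss$'' used repeatedly above.
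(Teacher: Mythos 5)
Your proof is correct and takes essentially the same route as the paper's: your ``geometric fact'' about $\ran(P_U - P_{\Ss})$ is exactly the paper's Lemma~\ref{lem:inclusion} specialized to $L = P_j$, and both arguments then translate the operator identity $P_T P_j = P_{\Ss} P_j$ into $P_{T\setminus \Ss}\, P_{\Ss}^\perp P_j = 0$ and invoke perfectness to convert that into graph separation. The one point worth making explicit is that ``$\Ss$ separates $j$ from $S\setminus\Ss$'' is not a graph-theoretic triviality: it requires Lemma~\ref{lem:sep:1} \emph{together with} the closure of $\Tc_j(S)$ under intersections from Theorem~\ref{thm:lattice}, and once you have that, Lemma~\ref{lem:sep:1} already yields $\Ss \in \Tc_j(S)$ directly, so your second pass through perfectness in part (a) is harmless but redundant.
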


We next provide a characterization of the minimal and maximal sets of the neighborhood lattice of $j$ via connected components of the graph resulting from the removal of $j$:

\begin{thm}\label{thm:conn:comp} 
\sloppypar{Under $\Hil$-Markov perfectness, suppose that removing node $j$ and the edges connected to it breaks the PCG into $K$ connected components given by the vertex subsets $G_1,G_2,\dots,G_K \subset [d]_j$. Let $S_k = S \cap G_k$. Then,}
    \begin{itemize}
        \item[(a)] $m_j(S) = \biguplus_k m_j(S_k)$,
        \item[(b)] $M_j(S) = \bigcup_k M_j(S_k) = \biguplus_k M_j(S_k;G_k)$, where $M_j(S_k;G_k)$ is the largest element of
        \begin{align*}
            \Tc_j(S_k;G_k) :=  \{ T \subset G_k :\; P_T P_j = P_{S_k} P_j\}.
        \end{align*}
    \end{itemize}
\end{thm}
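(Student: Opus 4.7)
The plan is to reduce both claims to statements about graph separation via Theorem~\ref{thm:sep:char}, and then exploit the following structural observation. Since $G_k$ is a connected component of $G \setminus \{j\}$ and paths are simple, any path in $G$ from $j$ to a node of $G_k$ must, after leaving $j$, remain entirely inside $G_k$. Consequently, for any $T \subset [d]_j$ and any $A \subset G_k$, the set $T$ separates $j$ from $A$ in $G$ if and only if $T \cap G_k$ does. This ``locality'' is what drives the decomposition across components.

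For part (a), Theorem~\ref{thm:sep:char}(a) rephrases the claim as $S^* = \biguplus_k S_k^*$, where $S_k^*$ is defined from $S_k$ by \eqref{eq:Sstar}. First I verify that $\biguplus_k S_k^*$ separates $j$ from its complement in $S$: this is immediate, because $S \setminus \biguplus_k S_k^* = \biguplus_k (S_k \setminus S_k^*)$, and by definition each $S_k^*$ separates $j$ from $S_k \setminus S_k^*$ in $G$, so the larger set $\biguplus_k S_k^*$ does as well. For minimality, suppose $T \subset S$ separates $j$ from $S \setminus T$. The locality observation implies $T \cap S_k = T \cap G_k$ separates $j$ from $S_k \setminus (T \cap S_k)$ in $G$, so minimality of $S_k^*$ forces $T \cap S_k \supseteq S_k^*$ for every $k$, giving $T \supseteq \biguplus_k S_k^*$. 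Intersecting over all such $T$ yields $S^* = \biguplus_k S_k^*$.

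For part (b), Theorem~\ref{thm:sep:char}(b) gives $M_j(S) = S^* \cup E_j(S^*)$. Using part (a) and the locality observation, $S^*$ separates $i \in G_k$ from $j$ in $G$ iff $S_k^*$ does, so $E_j(S^*) \cap G_k$ coincides with the analogous ``separated by $S_k^*$'' set computed inside $G_k$. Applying Theorem~\ref{thm:sep:char}(b) to the induced subgraph $G[G_k \cup \{j\}]$ (which inherits perfectness, since projections and separations both restrict compatibly to subsets of coordinates) identifies $S_k^* \cup (E_j(S_k^*) \cap G_k)$ with $M_j(S_k; G_k)$. Summing over $k$ yields $M_j(S) = \biguplus_k M_j(S_k; G_k)$. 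The remaining equality $\bigcup_k M_j(S_k) = \biguplus_k M_j(S_k; G_k)$ follows by showing $M_j(S_k; G_k) = M_j(S_k) \cap G_k$: the restricted lattice is obviously contained in the unrestricted one; conversely, any node in $M_j(S_k) \setminus G_k$ must lie in a component $G_{k'}$ that is disconnected from $j$ in $G$ (since $S_k^* \subset G_k$ cannot separate $i$ from $j$ through a path lying in $G_{k'} \cup \{j\}$ unless no such path exists), and such a node is accounted for inside $M_j(S_{k'}; G_{k'})$ in the disjoint union.

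The main obstacle is the bookkeeping for components $G_{k'}$ that are disconnected from $j$ in $G$. In that case $M_j(S_k)$ can pick up vacuously-separated nodes from outside $G_k$, and the two formulations $\bigcup_k M_j(S_k)$ versus $\biguplus_k M_j(S_k; G_k)$ diverge unless one argues that every such extra node is captured exactly once through its own component's restricted lattice. The introduction of $M_j(S_k; G_k)$ in the statement is precisely the device that normalizes this redundancy, and most of the technical work lies in justifying the identity $M_j(S_k; G_k) = M_j(S_k) \cap G_k$ via the locality observation and the inheritance of perfectness by induced subgraphs.
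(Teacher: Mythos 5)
Your proof is correct and follows the same overall architecture as the paper's: both reduce the claim to graph separation via Theorem~\ref{thm:sep:char} and then decompose the relevant separation statements across the components $G_k$. The difference is in the key lemma that drives the decomposition. The paper first reduces by induction to $K=2$ components and then deduces the equivalence $\sep(j,A,S\setminus A)\iff\sep(j,A_1,S_1\setminus A_1)\text{ and }\sep(j,A_2,S_2\setminus A_2)$ from two abstract properties of separation (an additive property and a reduction property), proving minimality of $\biguplus_k S^*_k$ by contradiction. You instead prove a single ``locality'' observation --- any path from $j$ to a node of $G_k$ has all its interior nodes in $G_k$, so for $A\subset G_k$ a set $T$ separates $j$ from $A$ iff $T\cap G_k$ does --- which handles all $K$ components at once, gives the minimality of each $S_k^*$ directly rather than by contradiction, and also yields $E_j(S^*)\cap G_k=E_j(S^*_k)\cap G_k$ in one line. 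Your treatment of the bookkeeping for components disconnected from $j$ (why $\bigcup_k M_j(S_k)$ is not disjoint while $\biguplus_k M_j(S_k;G_k)$ is, and why the two agree) is the same as the paper's and is handled correctly. One minor caveat: your parenthetical claim that perfectness is inherited ``since projections and separations both restrict compatibly to subsets of coordinates'' is false for arbitrary coordinate subsets (marginalization can create edges); it is true here precisely because $G_k\cup\{j\}$ is closed under paths of $G$, which is again your locality observation. The paper is comparably terse on this point (it only verifies that the minimal elements of $\Tc_j(S_k)$ and $\Tc_j(S_k;G_k)$ agree before invoking Theorem~\ref{thm:sep:char} on the restricted ground set), so this is a shared imprecision rather than a gap, but it is worth making the restriction-of-perfectness step explicit.
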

    \noindent
    In these statements $\biguplus$ denotes the \emph{disjoint} union.
    Note that $\Tc_j(S_k;G_k)$ is the lattice restricted to the ground set $G_k \cup \{j\}$ (i.e. instead of $[d]$). The original lattice given in Definition~\ref{defn:nhbd:lattice} can be thought of as $\Tc_j(S_k;[d]_j)$.
	For illustrative purposes, some simple consequences of Theorems~\ref{thm:sep:char} and~\ref{thm:conn:comp} are as follows:
    \begin{itemize}
        \item[(i)] $m_j(S) = \emptyset$ iff there is no path between $j$ and $S$. (Separation by the empty set.)
        \item[(ii)] If the PCG decomposes into two disjoint components, say $G$ and $H$, then $M_j(S)$ contains $H$ for any $j\in G$ and any $S$.    \end{itemize}
	Both (i) and (ii) hold without the perfectness assumption, and can be shown directly. However, the graphical view of Theorems~\ref{thm:sep:char} and~\ref{thm:conn:comp} makes them immediately clear.
 
\begin{figure}
    \centering
    \begin{tabular}{ccc}
        \includegraphics[width=1.8in]{figs/pcg_colored.pdf} \quad\quad &
        \includegraphics[width=2in]{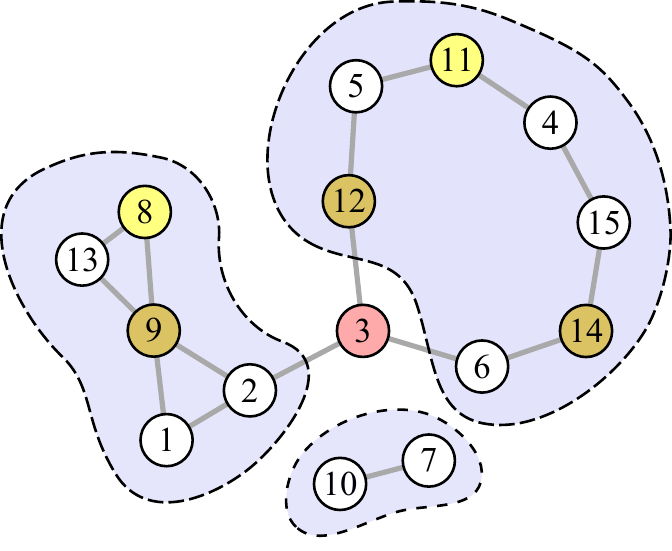}  \\
        (a) & (b)     \end{tabular}
    \caption{The graphical computation in Example~\ref{exa:perfect:cont} of the neighborhood lattice $\Tc_j(S)$ for $j=3$ and $S = \{9,8,11,12,14\}$. (a) Original PCG with $j$ and $S$ specified with different colors. 
        From this figure, it is easy to see that $\Ss = \{9,12,14\}$ is the smallest subset of $S$ that separates $j$ from $S \setminus \Ss$, hence $m_j(S) = \Ss$. 
                (b)~Illustration of the three connected components which result after removing node $j=3$. As a result of Theorem~\ref{thm:conn:comp}, we can work in each component separately. For example, restricted to the left component $G_1 = \{1,2,8,9,13\}$, the minimal subset of $S \cap G_1 = \{8,9\}$ that separates $j=3$ from the rest of $S \cap G_1$ is $\Ss_1 = \{9\}$. Within $G_1$, $\Ss_1$ separates $\{8,13\}$ from $j=3$. Hence, the restricted lattice $\Tc_j(S\cap G_1;G_1) = [\{9\},\{9,8,13\}]$.}
    \label{fig:pcg:graph:comp}
\end{figure}

\begin{exa}\label{exa:perfect:cont}
    Continuing with Example~\ref{exa:perfect}, let $j=3$ and  $S =\{9,8,11,12,14\}$. It is clear from Figure~\ref{fig:pcg:graph:comp}(a) that the smallest subset $\Ss$ of $S$ separating $j$ from $S \setminus \Ss$ is $\{9,12,14\}$. Hence, $m_j(S) = \Ss$ according to Theorem~\ref{thm:sep:char}. Similarly, $E_j(\Ss) = \{4,5,7,8,10,11,13,15\}$ and $M_j(S) = \Ss \cup E_j(\Ss)$.
    To verify Theorem~\ref{thm:conn:comp}, note that removing $j$ breaks the PCG into three connected components $G_1 = \{1,2,8,9,13\}$, $G_2 = \{7,10\}$ and $G_3 = \{4,5,6,11,12,14,15\}$, as illustrated in Figure~\ref{fig:pcg:graph:comp}(b). Applying Theorem~\ref{thm:sep:char}, we can compute the restricted lattices $\Tc_j(S \cap G_k; G_k), k =1,2,3$. Using the notation $[m,M]$ to represent a lattice with minimum and maximum elements $m$ and $M$, respectively, the three lattices are: $[\{9\},\{8,9,13\}]$, $[\{12,14\},\{4,5,11,12,14,15\}]$ and $[\emptyset, \{7,10\}]$. It is clear that the minimal and maximal elements of the original lattice are the disjoint union of the corresponding elements of these three lattices.
\end{exa}

As  Example~\ref{exa:perfect:cont} shows, graphical computation (assuming perfectness holds) could be more  efficient than direct computation. The PCG can be constructed from the sparsity pattern of the inverse Gram matrix, $\Sigma^{-1}$, after which all the lattice computations reduce to establishing certain connectivity criteria on the graph. 

\section{Directed models}\label{sec:directed}

Just as the undirected PCG defined in Definition~\ref{defn:pcg} is the $L^{2}$ analogue of undirected conditional independence graphs, it is possible to extend the general $L^{2}$ concepts to \emph{directed graphs} in a way that mirrors the theory of Bayesian networks (BNs) and structural equation models. In this section, we discuss such extensions and illustrate how these ideas are closely related to neighborhood regression and the neighborhood lattice introduced in Section~\ref{sec:nbhd:lattice}. 

\subsection{Directed PCGs}\label{sec:directed:pcg}

There are several equivalent ways to define the notion of a \emph{directed PCG}, which we describe here. 
Let $G$ be a DAG on $[d]$, and let $\pa_j$ denote the parent set of node $j$ in $G$. Similarly, let $N_j$ be the set of non-descendants of $j$ in $G$ (i.e., $i \in N_j$ if no directed path exists from $j$ to $i$), which is a well-defined notion due to the acyclicity assumption. 

\begin{defn}
    We say that $\gv = (\gv_1,\dots,\gv_d) \in \Hil^d$ satisfies a \emph{directed PCG} w.r.t. $G$~if 
    \begin{align}\label{eq:dPCG:def1}
    P_i P_{\pa_j}^\perp P_j = 0, \quad \forall j \in [d],\; i \in N_j.
    \end{align}
\end{defn}
This condition is modeled after the local Markov properties of BNs. Intuitively,~\eqref{eq:dPCG:def1} says that the residual after projecting $x_j$ onto its parents is orthogonal to any of $x_j$'s non-descendants. 
Alternatively, we could require that the residuals after projecting $x_i$ and $x_j$ onto their respective parent sets are orthogonal, which gives a condition that is symmetric in $i$ and~$j$:
\begin{align}\label{eq:dPCG:def2}
    P_i P_{\pa_i}^\perp P_{\pa_j}^\perp P_j = 0, \quad \text{for all distinct}\; i,j \in [d].
\end{align}
A common way to rewrite these conditions is to require that there exists a vector $e \in \Hil^d$, and a matrix $B \in \reals^{d \times d}$ with zero diagonal entries, whose associated graph is $G$ (that is, $\{(k,j) : B_{kj}\ne 0\}$ is the edge set of $G$) such that
\begin{align}
\label{eq:sem} (a)\; x = B^T x  + e,\quad 
(b) \; (B^T x)_i \perp e_i, \quad 
(c) \;e_i \perp e_j, \quad \forall i,j \in [d].
\end{align}
Here, $B^T x$ is interpreted as an element of $\Hil^d$ with entries $(B^T x)_j = \sum_{k} B_{kj} x_k$. The model \eqref{eq:sem} is often called a  (recursive) \emph{structural equation model} for $x$. 

We have opted to define a directed PCG via \eqref{eq:dPCG:def1} owing to its familiarity from the literature on directed graphical models. The following lemma, however, establishes the equivalence of conditions (\ref{eq:dPCG:def1}-\ref{eq:sem}):
\begin{lem}
Let $\gv = (\gv_1,\dots,\gv_d) \in \Hil^d$ be a vector satisfying~\eqref{asum:PSD:Sigma} and $G$ a directed acyclic graph. Then conditions \eqref{eq:dPCG:def1}, \eqref{eq:dPCG:def2}, and \eqref{eq:sem} are equivalent. 
\end{lem}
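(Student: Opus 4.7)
The plan is to establish equivalence via the cycle $\eqref{eq:dPCG:def1} \Leftarrow \eqref{eq:sem} \Rightarrow \eqref{eq:dPCG:def1}$... actually a straight cycle $\eqref{eq:sem} \Rightarrow \eqref{eq:dPCG:def1} \Rightarrow \eqref{eq:dPCG:def2} \Rightarrow \eqref{eq:sem}$. Throughout, I would fix a topological ordering $\tau$ of $G$ (so that $\pa_j$ precedes $j$) and work with the parental residuals $\eta_j := P_{\pa_j}^\perp x_j \in \Hil$, which serve as the ``canonical errors.'' The key passage back and forth is the identification $\ran(P_{\pa_j}^\perp P_j) = \Span\{\eta_j\}$, which via Lemma~\ref{lem:op:orth}(c) reduces each operator-level partial orthogonality statement to a vector-level statement about inner products of the $\eta_j$'s and $x_i$'s.

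For $\eqref{eq:sem} \Rightarrow \eqref{eq:dPCG:def1}$, I would invert $(I - B^T) x = e$: acyclicity of $G$ makes $B$ permutable to a strictly triangular matrix, so $(I - B^T)^{-1}$ has a finite Neumann series and $x_i \in \Span\{e_k : k \in \mathrm{anc}(i) \cup \{i\}\}$. Combined with the mutual orthogonality of the $e_k$'s in \eqref{eq:sem}(c), this first lets me verify that $e_j \perp x_k$ for every $k \in \pa_j$ (since $\mathrm{anc}(k) \cup \{k\} \subseteq \mathrm{anc}(j)$ excludes $j$), upgrading \eqref{eq:sem}(b) to the stronger identification $e_j = \eta_j$. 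Then for any $i \in N_j$ the set $\mathrm{anc}(i) \cup \{i\}$ omits $j$, so the same expansion yields $\ip{x_i, \eta_j} = 0$, which is \eqref{eq:dPCG:def1}.

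For $\eqref{eq:dPCG:def1} \Rightarrow \eqref{eq:dPCG:def2}$, I would show the vector-level claim $\ip{\eta_i, \eta_j} = 0$ for all $i \neq j$, which then upgrades to \eqref{eq:dPCG:def2} by Lemma~\ref{lem:op:orth}. WLOG by acyclicity assume $i \in N_j$; writing $\eta_i = x_i - P_{\pa_i} x_i$ and observing that $\pa_i \cup \{i\} \subseteq N_j$ (a parent of a non-descendant of $j$ cannot itself be a descendant of $j$), hypothesis \eqref{eq:dPCG:def1} applied to each element of this set kills both terms in $\ip{\eta_i, \eta_j}$. For $\eqref{eq:dPCG:def2} \Rightarrow \eqref{eq:sem}$, I would define $B_{kj}$ as the regression coefficient of $x_j$ onto $x_k$ for $k \in \pa_j$ (zero elsewhere); then $(B^T x)_j = P_{\pa_j} x_j$ and $e_j := x_j - (B^T x)_j = \eta_j$, so (a) is tautological, (b) is the defining orthogonality of regression, and (c) is \eqref{eq:dPCG:def2} tested on the rank-one pieces $P_i$ and $P_j$.

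The one delicate point is matching the support of the constructed $B$ to the edge set $E(G)$ exactly, as the literal reading of \eqref{eq:sem} requires $\{(k,j) : B_{kj} \neq 0\} = E(G)$. If some parental regression coefficient happens to vanish, the $B$ I construct is supported on a strict subgraph of $G$. I would handle this either by noting that such a redundant edge can be removed from $G$ without disturbing \eqref{eq:dPCG:def1} or \eqref{eq:dPCG:def2} (so one may pass to the minimal DAG with these properties), or by adopting the common convention that ``associated graph'' means $\{(k,j) : B_{kj} \neq 0\} \subseteq E(G)$, which is standard in the recursive SEM literature.
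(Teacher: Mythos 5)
Your proof is correct, and it takes a genuinely different route from the paper's. The paper proves \eqref{eq:dPCG:def1}$\iff$\eqref{eq:dPCG:def2} directly at the operator level (Appendix~\ref{sec:dPCG:equiv}): the forward direction uses the identity $P_i P_{\pa_j}^\perp P_j = P_i P_{\pa_i}^\perp P_{\pa_j}^\perp P_j$, valid once one knows $P_{\pa_i}P_{\pa_j}^\perp P_j = 0$ from $\pa_i \subset N_j$ (the same graph-theoretic observation you use), while the backward direction runs an induction on the depth of node $i$; the link to \eqref{eq:sem} is then dispatched in one line via the correspondence $e_j = P_{\pa_j}^\perp x_j$. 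You instead close the cycle \eqref{eq:sem}$\Rightarrow$\eqref{eq:dPCG:def1}$\Rightarrow$\eqref{eq:dPCG:def2}$\Rightarrow$\eqref{eq:sem}, working at the vector level with the residuals $\eta_j$, so the direction the paper handles by depth induction is obtained for free by composing \eqref{eq:dPCG:def2}$\Rightarrow$\eqref{eq:sem}$\Rightarrow$\eqref{eq:dPCG:def1}, with the finite Neumann series $x = \sum_k (B^T)^k e$ playing the role of the induction (it is the same unrolling along a topological order, packaged algebraically). Two things your version buys that the paper leaves implicit: you make explicit that \eqref{eq:sem}(b) alone does not identify $e_j$ with $P_{\pa_j}^\perp x_j$ and must be upgraded to $e_j \perp x_k$ for all $k \in \pa_j$ using (c) and the ancestral span containment; and you flag the support-matching issue (a vanishing parental regression coefficient makes the constructed $B$ supported on a strict subgraph of $G$), which the paper's literal reading of ``whose associated graph is $G$'' does not address. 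The paper's argument, in exchange, stays entirely within the projection calculus and never needs the SEM representation to relate \eqref{eq:dPCG:def1} and \eqref{eq:dPCG:def2}.
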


\noindent
The equivalence of~\eqref{eq:dPCG:def1} and~\eqref{eq:dPCG:def2} is proved in Appendix~\ref{sec:dPCG:equiv}.
The equivalence of~\eqref{eq:dPCG:def2} and~\eqref{eq:sem} is established via the following correspondences: $(B^T x)_j = P_{\pa_j} x_j$ and $e_j = P_{\pa_j}^\perp x_j$. We note that $B$ will not be unique unless the Gram matrix $\Sigma = (\ip{x_i,x_j})$ is nonsingular. Assuming that $\Sigma$ is nonsingular,  the $j$th column of $B$ is  $\beta_j(\pa_j)$ as given by Definition~\ref{defn:nhbd}. 

\newcommand{\xt}{\widetilde{x}}
\newcommand{\Bt}{\widetilde{B}}
It is always possible to obtain a directed PCG for a given $x \in \Hil^d$ by fixing an ordering of the elements of $x$ and performing \emph{recursive projection}, i.e., projecting each element onto those that come before it in the ordering. More precisely, fix a $d \times d$ permutation matrix $P$, and let $\xt= P x$. Then, we set $e_1 = \xt_1$ and proceed by projecting $\xt_j$ onto $\xt_1,\dots,\xt_{j-1}$ for $j \ge 2$, and calling the residual $e_j$. It is easy to see that this procedure leads to an SEM of the form $\xt = \Bt^T \xt + e$ where $\Bt^T$ is lower triangular. In terms of the original vector $x$, we obtain an SEM of the form~\eqref{eq:sem} with $B = B(P) = P^T \Bt P$, hence a directed PCG as defined above.

 In the other direction any directed PCG is obtained in this way, i.e. if~\eqref{eq:sem} holds, one can obtain a permutation matrix $P$ such that $P B P^T$ is upper triangular (due to acyclicity assumption). Furthermore, letting $\pi_P: [d] \to [d]$ be the permutation associated with $P$ and defining $S_{j}=S_{j}(P)=\pi_P^{-1}(\{1,\dots,j\})$, the $j$th column of the matrix $B$ obtained in this way corresponds to the coefficients $\beta_{j}(S_{j})$ as  given by Definition~\ref{defn:nhbd}. More interestingly, the support of $\beta_j(S_j)$ could be smaller than $S_j$ (the original candidate parent set used in the recursive projection). This support will be the minimal element of the corresponding neighborhood lattice, $\Tc_j(S_j)$, and will act as the (actual) parent set of $j$ in the constructed PCG: that is, $\Pi_j = m_j(S_j)$ and $\beta_j(S_j) = \beta_j(\Pi_j)$. The recursive projection procedure thus 
establishes an interesting connection between undirected PCGs, directed PCGs, SEM coefficients, and the neighborhood lattice which can be exploited when learning graphs from data (Section~\ref{sec:learning:directed}).

From the above discussion, it is clear that for any $x \in \Hil^d$, there are in fact many potential directed PCGs: One for each possible ordering of its elements (note that some orderings might lead to the same directed PCG). The corresponding SEM coefficients $B$ can be related to the Cholesky factors of the inverse Gram matrix, after proper permutation: Assume that~\eqref{eq:sem} holds, $P$ is a permutation matrix that makes $P B P^T$ upper triangular (always possible for a DAG), and $D$ is the diagonal Gram matrix of $e$. Letting $\Sigma$ be the Gram matrix of $x$, it is not hard to see that 
\[
\Sigma^{-1} = (I-B) D^{-1} (I-B)^T.
\]
Performing an ``upper'' Cholesky decomposition on $P \Sigma^{-1} P^T = U U^T$ (i.e., $U$ is an upper triangular matrix) we obtain $ U = P(I-B) D^{-1/2} P^T$ due to the uniqueness of the Cholesky factorization for positive definite matrices. From this interpretation, it is clear that the SEM in \eqref{eq:sem}---alternatively a directed PCG---is not unique, i.e., for every permutation $P$ of the elements of $x$, there is a corresponding SEM obtained from the upper Cholesky decomposition of $P \Sigma^{-1} P^T$. This distinguishes directed PCGs from undirected PCGs, which are always unique.

\subsection{Learning directed PCGs}\label{sec:learning:directed}

The SEM model \eqref{eq:sem} provides a useful, interpretable way to describe how the variables in $x=(x_{1},\ldots,x_{d})$ relate to one another that is commonly used in applications such as biology, social science, and machine learning. Given its apparent utility in practice, it is of significant interest to learn the coefficient matrix $B$ from data. Furthermore, since the matrix $B$ can be interpreted as a weighted adjacency matrix for the underlying DAG $G$, learning $B$ also implies learning the structural relations encoded by the graph $G$.

Unfortunately, since directed PCGs are not unique these models are unidentifiable. In the previous subsection we illustrated how the notion of a directed PCG is naturally related to an implicit ordering on the variables, represented by a permutation $P$. To circumvent identifiability issues, one often seeks permutations that result in the sparsest DAGs, as these represent parsimonious explanations of the variables that are simple to interpret in practice. If we know the order of the variables that leads to the sparsest possible directed graph $G$, then we can use the recursive projection procedure outlined in Section~\ref{sec:directed:pcg} to estimate $G$. More specifically, we can use \eqref{eq:def:nhbd:regression} with the neighborhoods $S_{j}$ defined previously to estimate the support of each $\beta_{j}(S_{j})$ and hence the graph $G$.

In practice, of course, we do not know this ordering. Thus, we must consider all possible $d!$ orderings of the variables. A naive algorithm would compute $\beta_{j}(S)$ for all possible neighborhoods $S\subset[d]_{j}$ and all $j\in[d]$, and check all possible permutations to find the sparsest graph $G$. This is clearly computationally infeasible. In considering all possible neighborhoods, however, one implicitly encounters the neighborhood lattices $\Tc_j(S)$ (Definition~\ref{defn:nhbd:lattice}) for every possible $S \subset [d]_j$ and $j \in [d]$. These lattices encode how much ``redundancy'' exists in the different neighborhoods, and suggests that we can reduce the total number of neighborhoods one must consider by exploiting the algebraic structure of these lattices. This leads one to wonder: Is it possible to exploit this redundancy in order to learn directed PCGs more efficiently?

Without sparsity or other simplifying assumptions, there are $2^{d-1} d$ neighborhood problems to consider, which is intractable when $d$ is large. By imposing a natural sparsity assumption on the neighborhood lattices, however, the total number of neighborhood problems reduces substantially owing to the redundancies encoded by the neighborhood lattices. For example, if we assume that $|m_j(S)| \le k \ll d$, for all $S \subset [d]_j$ and $j \in [d]$, the total number of neighborhood problems reduces to at most $\binom{d}{k} d\le d^{k+1} \ll 2^{d-1} d$. This is an immediate consequence of the lattice construction in Section~\ref{sec:nbhd:lattice}. By Remark~\ref{rem:activeset}, we have $m_{j}(S)=\supp(\beta_{j}(S))$, so that this assumption amounts to assuming that the parent sets are sparse, which is a natural assumption to make in practice. This suggests that one can learn all possible SEM representations of the variables using a much smaller sample size, relative to the naive approach of solving all possible regressions. This has been successfully exploited for Gaussian models \citep{aragam2016} to achieve optimal sample complexity $n=\Omega(k\log d)$ in learning recursive structural equation models \citep{ghoshal2017limits}. The results presented in the current work suggest that these ideas can be extended much further to \emph{non-Gaussian} models, a direction we intend to pursue in the future.

\section{Discussion}\label{sec:discuss}
{In this paper, we investigated the algebraic structure of generalized neighborhood regression, motivated by the problem of enumerating all partial orthogonality relations efficiently under minimal assumptions. Our results on the neighborhood lattice are presented with a generalized notion of partial orthogonality in a Hilbert space as the counterpart of conditional independence. We further explored PCGs, as a generalization to conditional independence graphs, and their connections to neighborhood lattices.} Finally, we discussed some subtleties and complications in extending these ideas to directed models. Studying these extensions in more depth is left for future work. Below, we discuss an additional avenue for future work regarding the estimation of partial orthogonality relations from a finite sample.

From a statistical perspective, all the discussions in this paper have been at the population level. In practical statistical applications, one has to estimate  partial orthogonality relations from a sample of size $n$. To be precise, consider the cases where the underlying Hilbert space $\Hil$ has a stochastic component, that is, its inner product is defined as $\ip{x_1,y_1}_\Hil = \ex \ip{x_1,y_1}_{\Hil_0}$ for $x_1,y_1$ random elements from a \emph{base} Hilbert space $\Hc_0$, e.g, $\Hil_0 = \reals$ or $\Hil_0 = H^1([0,1])$; see the examples in Appendix~\ref{sec:pcg:examples}. Now, assume that we observe independent copies  $x^{(1)},\dots,x^{(n)}$ of  $x \in \Hil^d$, a stochastic vector in $\Hil_0^d$. Based on this sample, we wish to estimate some or all of the partial orthogonality relations \eqref{eq:partial:orth}. 

Under the perfectness assumption, according to the theorems of Section~\ref{sec:graphical:comp}, the problem reduces to estimating the support of $\Sigma^{-1}$. If $d$ is fixed and $n\to\infty$, thresholding the inverse of the \emph{sample Gram matrix}, given by
    \begin{align*}
    \Sigh = (\Sigh_{i,j}) \in \reals^{d \times d}, \quad \Sigh_{i,j}=\frac1n \sum_{k=1}^n \ip{x^{(k)}_i, x^{(k)}_j}_{\Hil_0},
    \end{align*}
may be sufficient without many assumptions on the distribution. For high-dimensional data ($d \gg n$), however, it would be interesting to see if some of the well-known penalized estimators such as the  \emph{graphical lasso}~\citep{friedman2008} can be used to estimate the PCG without the Gaussian assumptions. Much of the previous work in this area has been focused on the Gaussian case~\citep{meinshausen2006,yuan2007,banerjee2008,uhler2012}. Note that to recover the PCG, we need to have consistent support recovery, perhaps the strongest form of consistency in high-dimensional problems. 

Without the perfectness assumption, we have to consistently estimate the support of $\beta_j(S)$ for all $j$ and $S \subset [d]_j$, simultaneously.  This can be done, at least in theory, by solving all the neighborhood regression problems:
    \begin{align*}
    \widehat\coef_j(S) := \argmin_{\beta \,\in\, \R^d, \;\supp(\beta)\, \subset\, S}
    \frac1n \sum_{k=1}^n \norm{\dvec_{j}^{(k)} - \beta^T\dvec^{(k)}}_{\Hil_0}^{2},
    \end{align*}
and asking whether the support of $\widehat\coef_j(S)$, after thresholding or adding regularization, is consistent for that of $\coef_j(S)$ for all $j$ and $S$. This is a very challenging problem, as there are \emph{a priori} a super-exponential number of these neighborhood regressions. Here is where the lattice property from Section~\ref{sec:nbhd:lattice} comes into play: By exploiting the lattice property, we can reduce the total number of neighborhood regression problems that we need to look at, as detailed in Section~\ref{sec:learning:directed}.

Finally, we note that the results presented here offer an interesting connection between undirected and directed graphs via the well-understood and intuitive notion of neighborhood regression. While such connections have appeared in passing in previous work, our results confirm and extend these ideas in a very general setting. While there is much interest in using undirected graphs to simplify learning directed graphs, we emphasize that these results also offer a way to go in the \emph{opposite} direction. Namely, given a DAG, how can we interpret the relationships encoded in the model in terms of the perhaps more familiar undirected PCG? Theorems~\ref{thm:sep:char} and~\ref{thm:conn:comp} provide an explicit description of how the connections in $B$---given by $m_{j}(S_j)$---can be characterized in terms of the underlying PCG. Thus, our results provide an intuitive bridge between SEM and PCGs in a very general setting.

\section{Proofs of the main results}

Let us start with some preliminary lemmas. Recall the lattice of projections, $\Pc(\Hil)$, introduced in Section~\ref{sec:proj:lattice} and its meet and join operations. We have:
\begin{lem}\label{lem:sup:P:Q}
    For  $P,Q \in \Pc(\Hil)$,  $P y = 0$ and $Q y = 0 \iff (P \vee Q) y =0$.     
\end{lem}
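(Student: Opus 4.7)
The plan is to translate the kernel conditions into orthogonality statements about the ranges of $P$ and $Q$, and then invoke the explicit description of $P \vee Q$ recalled in Section~\ref{sec:proj:lattice}: it is the projection onto the closed linear span $\overline{\Span}(\ran(P) \cup \ran(Q))$. Throughout, I will use the standard fact that for an orthogonal projection $R \in \Pc(\Hil)$, $\ker(R) = \ran(R)^\perp$, because $R$ is self-adjoint and idempotent.

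For the forward direction, I would argue as follows. Suppose $Py=0$ and $Qy=0$. Then $y \in \ran(P)^\perp \cap \ran(Q)^\perp = \bigl(\ran(P) \cup \ran(Q)\bigr)^\perp$. Since taking the orthogonal complement of a set equals the orthogonal complement of its closed linear span, this set coincides with $\bigl(\overline{\Span}(\ran(P)\cup\ran(Q))\bigr)^\perp = \ran(P\vee Q)^\perp = \ker(P\vee Q)$. Hence $(P \vee Q)y = 0$.

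For the reverse direction, assume $(P \vee Q)y = 0$. Since $P \le P \vee Q$ and $Q \le P \vee Q$, the range inclusions $\ran(P) \subset \ran(P\vee Q)$ and $\ran(Q) \subset \ran(P\vee Q)$ give, by taking orthogonal complements, $\ker(P\vee Q) \subset \ker(P)$ and $\ker(P\vee Q) \subset \ker(Q)$. Applying these inclusions to $y$ yields $Py=0$ and $Qy=0$, completing the equivalence.

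I expect no serious obstacle here; the argument is essentially bookkeeping with the defining properties of the join in $\Pc(\Hil)$ and the self-adjointness of orthogonal projections. The only point requiring mild care is the identity $(A \cup B)^\perp = (\overline{\Span}(A \cup B))^\perp$, which is a standard consequence of the continuity and linearity of the inner product; it is what makes the span of the ranges appear naturally when intersecting the two orthogonal complements.
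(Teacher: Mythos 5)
Your argument is correct and rests on the same fact the paper uses, namely that $\ran(P \vee Q)$ is the closed linear span of $\ran(P) \cup \ran(Q)$; the paper merely sketches this via a spanning set built from bases of the two ranges, whereas you carry out the equivalent bookkeeping with orthogonal complements. If anything, your version is the more careful one, since it avoids the paper's loose claim that a \emph{basis} of $\ran(P\vee Q)$ can be taken to be the union of bases of $\ran(P)$ and $\ran(Q)$.
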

In addition, for the collection of projection operators $\{P_S, S \subset [d]\}$ introduced in Definition~\ref{defn:pcg}, we have:  For any $T_1,T_2 \subset [d]$, $P_{T_1} \vee P_{T_2} = P_{T_1 \cup T_2}$ and $P_{T_1} \wedge P_{T2} = P_{T_1 \cap T_2}$. In particular,  $S \subset T$ implies $P_S \le P_T$. 
All these statements extend to any number of operators and can be argued by considering bases for the underlying subspaces. For example, for the statement in Lemma~\ref{lem:sup:P:Q}, a basis for the range of $P \vee Q$ can be built in such a way that it consists of the union of bases for the range of $P$ and  the range of $Q$, from which the statement follows.
The following lemma is also key in the proofs: 

\begin{lem}\label{lem:inclusion}
    For $S \subset T$,  $R = T \setminus S$ and any $L \in B(\Hil)$,
    \begin{align*}
    P_T L = P_S L \iff P_R P_S^\perp L = 0.
    \end{align*}
\end{lem}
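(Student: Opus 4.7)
The plan is to reduce the statement to the identification of the range of $P_T - P_S$. First I would note that, since $S \subset T$, we have $P_S \le P_T$ (by the monotonicity of $P_{(\cdot)}$ with respect to inclusion mentioned after Lemma~\ref{lem:sup:P:Q}). A standard fact about orthogonal projections then says that $P_T - P_S$ is itself an orthogonal projection, and coincides with $P_T P_S^\perp = P_S^\perp P_T$. Consequently,
\[
 P_T L = P_S L \iff (P_T - P_S) L = 0 \iff P_T P_S^\perp L = 0.
\]
Thus it suffices to prove the equivalence $P_T P_S^\perp L = 0 \iff P_R P_S^\perp L = 0$.

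The key step is to identify $\ran(P_T - P_S)$. Because $T = S \uplus R$ is a disjoint union, we have $P_T = P_S \vee P_R$, and the closed span of $\{x_i : i \in T\}$ decomposes as the orthogonal sum of $\ran(P_S)$ and the closed span of the residuals $\{P_S^\perp x_i : i \in R\}$. The latter closed span is exactly $\ran(P_S^\perp P_R)$. Therefore
\[
 \ran(P_T - P_S) = \ran(P_S^\perp P_R).
\]

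Now $(P_T - P_S) L = 0$ says $\ran(L) \subset \ker(P_T - P_S) = \ran(P_T - P_S)^\perp$, which by the previous display is the same as $\ran(L) \perp \ran(P_S^\perp P_R)$. Applying Lemma~\ref{lem:op:orth}(c)$\Rightarrow$(b) to the operators $L$ and $P_S^\perp P_R$, this is equivalent to $(P_S^\perp P_R)^* L = 0$, i.e., $P_R P_S^\perp L = 0$, since $P_R$ and $P_S^\perp$ are self-adjoint. This closes the loop and yields the claimed equivalence.

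The only step requiring genuine care is the range identification $\ran(P_T - P_S) = \ran(P_S^\perp P_R)$, which hinges on the disjointness $T = S \uplus R$ and on the fact that $\ran(P_T)$ decomposes orthogonally as $\ran(P_S) \oplus P_S^\perp \ran(P_R)$; everything else is a direct application of the projection calculus collected in Section~\ref{sec:proj:lattice} and Lemma~\ref{lem:op:orth}. Once this identification is in hand, the equivalence reduces to standard range/kernel duality for orthogonal projections.
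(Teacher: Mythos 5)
Your proof is correct, and while the first reduction coincides with the paper's (both of you use $P_S \le P_T$ to rewrite $P_T L = P_S L$ as $P_T P_S^\perp L = 0$, via $P_T - P_S = P_T P_S^\perp$), the way you pass from there to $P_R P_S^\perp L = 0$ is genuinely different. The paper argues operator-algebraically in two separate directions: for the forward implication it simply left-multiplies $P_T P_S^\perp L = 0$ by $P_R$ and uses $P_R \le P_T$; for the converse it combines $P_R P_S^\perp L = 0$ with the trivial identity $P_S P_S^\perp L = 0$ and invokes Lemma~\ref{lem:sup:P:Q} to conclude $(P_R \vee P_S)P_S^\perp L = P_T P_S^\perp L = 0$. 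You instead prove a single biconditional by identifying the range $\ran(P_T - P_S) = \ran(P_S^\perp P_R)$ (the orthogonal complement of $\ran(P_S)$ inside $\ran(P_T)$ is spanned by the residuals $P_S^\perp x_i$, $i \in R$) and then applying range/kernel duality together with Lemma~\ref{lem:op:orth}. Your route is more geometric and makes visible \emph{why} the equivalence holds --- the operator $P_T - P_S$ and the operator $P_S^\perp P_R$ have the same range, so annihilating one against $L$ is the same as annihilating the other --- at the cost of the extra (correct, but not entirely free) range identification, which implicitly uses that the relevant spans are closed; this is automatic here since $R$ is finite, and harmless in general since orthogonality to a set and to its closed span coincide. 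The paper's version avoids any range computation and stays entirely within the $\le$, $\vee$ calculus of Section~\ref{sec:proj:lattice}, which is why it transfers verbatim to the abstract von Neumann algebra setting of Appendix~\ref{sec:abs:lattice}.
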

\begin{proof}
    Since $P_S \le P_T$, we have $P_T P_S = P_S$, hence 
    $(P_T - P_S) L = P_{T} (I - P_S) L= P_{T} P_S^\perp L$. That is, the LHS is equivalent to $P_{T} P_S^\perp L = 0$. Multiplying by $P_R$ and using $P_R \le P_{T}$ gives the RHS, i.e., $P_R P_{S}^\perp L = 0$. Now assume that the RHS is true; since in addition we have $P_S P_S^\perp L = 0$, it follows that $(P_R \vee P_S) P_S^\perp L = 0$ by  Lemma~\ref{lem:sup:P:Q}, from which we get the LHS by noting that $P_R \vee P_S = P_{S \cup R}$. 
\end{proof}

\subsection{Proof of Theorem~\ref{thm:lattice}}\label{sec:proof:thm:lattice}
\emph{Closure under intersections:} Let $Q$ be the projection onto the range of $P_S P_j$, so that we have $Q \le P_S$. Assume that $T_1,T_2,\dots \in \Tc_j(S)$ and let $R = \bigcap_{k} T_k$. Note that $Q$ is also the projection onto the range of $P_{T_k} P_j = P_S P_j$ for all $k$. Hence, $Q \le P_{T_k}$ (and also $Q P_{T_k} P_j = P_{T_k} P_j$). Since $P_R = \bigwedge_{k} P_{T_k}$, it follows that $Q \le P_R \le P_{T_k}$ for all $k$. This last statement is equivalent to $ P_R Q = Q$ and $P_R P_{T_k} = P_R$ for all $k$.
Now, we have 
\begin{align}
    P_R P_j = P_R P_{T_k} P_j = P_R (Q P_{T_k} P_j) = Q P_{T_k} P_j = P_{T_k} P_j = P_S P_j,
\end{align}
hence $R \in \Tc_j(S)$. The first equality follows from $P_R \le P_{T_k}$, the second and fourth since $Q$ projects onto range $P_{T_k} P_j$, and the third since $Q \le P_R$. Thus, we have shown that $\Tc_j(S)$ is closed under intersections.

\smallskip
\emph{Closure under unions:} 
Let $\Ss$ be the minimal element of $\Tc_j(S)$ which exists by the previous argument. Note that
\begin{align}\label{eq:temp:451}
     T \in \Tc_j(S) \iff T \supset \Ss \;\text{and}\;P_T P_{\Ss}^\perp P_j = 0.
\end{align}
To see this, it is enough to note that for $T \supset \Ss$, we have $(P_T - P_{\Ss})P_j = P_T (I - P_{\Ss})P_j = P_T P_{\Ss}^\perp P_j$, where the first equality is by $P_{\Ss} \le P_T$. Now, assume that $T_1,T_2,\dots \in \Tc_j(S)$ and let $M = \bigcup_{k} T_k$. Since $T_k \in \Tc_j(S)$, we have  $T_k \supset \Ss$ and $P_{T_k} P_{\Ss}^{\perp} P_j = 0$ for all $k$. It follows that $(\bigvee_k P_{T_k } )P_{\Ss}^{\perp} P_j = 0$ (Lemma~\ref{lem:sup:P:Q}). But $P_M = \bigvee_k P_{T_k }$, that is, $ P_M P_{\Ss}^{\perp} P_j = 0$, hence $M \in \Tc_j(S)$ by~\eqref{eq:temp:451}. 

\smallskip
\emph{Convexity:} Let $S_1,S_2 \in \Tc_j(S)$ and $S' \subset [d]_j$. Assume that $S_1 \subset S' \subset S_2$, so that $P_{S_1} \le P_{S'} \le P_{S_2}$. We have $P_{S_1} P_j = P_{S_2} P_j$. Multiply on the left by $P_{S'}$ and note that $P_{S'} P_{S_1} = P_{S_1}$, and $P_{S'} P_{S_2} = P_{S'}$. Thus, $P_{S_1} P_j = P_{S'} P_j$ showing that $S' \in \Tc_j(S)$. The proof is complete.

\medskip
\begin{rem}
	The argument above in terms of the projections works for an infinite-dimensional Hilbert space $\Hil$ and infinitely many variables $\{x_1,x_2,\dots\}$. It also holds even when the variables are dependent (i.e., $\Sigma$ is rank-deficient) if we remain at the level of projections (i.e., not map projections onto sets of variables); see for example the general result in Appendix~\ref{sec:abs:lattice}.
	
	In the finite-dimensional case $\Hil \cong \reals^n$, we can use the following simple argument instead to show that $\Tc_j(S)$ is a lattice:
	Suppose that $T_1,T_2 \in \Tc_j(S)$, i.e., $P_{T_1}x_j=P_{T_2}x_j =: \hat{x}_j$. Then 
	\begin{align*}
		\hat{x}_j\in \text{span}(x_{T_1\cap T_2}), \quad \text{and} \quad  x_j-\hat{x}_j \perp \text{span}(x_{T_1\cup T_2})
	\end{align*}
	where the first statement uses $\Sigma\succ 0$.
	 These two imply that $\hat x_j$ is the projection of $x_j$ onto $\text{span}(x_{T_1\cap T_2})$ as well as $\text{span}(x_{T_1\cup T_2})$, that is,  $T_1 \cap T_2 \in \Tc_j(S)$ and so is $T_1\cup T_2$.
\end{rem}

\subsection{Proofs of Section~\ref{sec:enumerate:POs}}\label{sec:proof:enumerate:POs}

We start with a result that provides alternative characterization of partial orthogonality (Definition~\ref{defn:PO}) in terms of the SEM coefficients (Definition~\ref{defn:nhbd}). 
It is not hard to see that $[\beta_j(S)]_S = \Sigma_S^{-1} \Sigma_{S,j}$ (Appendix~\ref{sec:proof:proj:Sigma:gen}). In addition, it is possible to write the condition $P_A P_S^\perp P_B =0$ directly in terms of the SEM coefficients, and also in terms of the Gram matrix $\Sigma$. We have the following result proved in Appendix~\ref{sec:proof:proj:Sigma:gen}:

\begin{prop}\label{prop:proj:Sigma:gen}
	Under~\eqref{asum:PSD:Sigma}, the following are equivalent:
	\begin{enumerate}[label=(\alph*)]
		\item $P_A P_S^\perp P_B = 0$, \label{eq:PAPSperpPB}
		\item $ 
		[\beta_j(A)]_i = \sum_{k \in S}  [\beta_j(S)]_k [\beta_k(A)]_i, \quad \forall i \in A,\; j \in B$, \label{eq:system:of:beta}
		\item $\Sigma_{B,A} - \Sigma_{B,S} \Sigma_{S}^{-1} \Sigma_{S,A} = 0$ (replaced with $\Sigma_{B,A} = 0$ if $S = \emptyset$),
		\item $[\beta_{j}(Si)]_i  = 0, \; \forall i \in A, \;j\in B$. 
		\label{eq:pairwise:beta}
	\end{enumerate}
	Moreover, for any $i,j \in [d]$ and $S \subset [d]_{ij}$, we have
	\begin{align}\label{eq:explit:pairwise:SEM}
	[\beta_j(Si)]_i = \frac{\ip{x_i, P_S^\perp x_j}_\Hil}{ \ip{x_i,P_S^\perp x_i}_\Hil}.
	\end{align}
\end{prop}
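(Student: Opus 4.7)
The plan is to prove the four equivalences together with formula \eqref{eq:explit:pairwise:SEM} by a short cycle that reduces everything to the entrywise Gram matrix identity $\ip{x_i, P_S^\perp x_j}_\Hil = \Sigma_{i,j} - \Sigma_{i,S}\Sigma_S^{-1}\Sigma_{S,j}$.

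First I would establish (a)$\iff$(c). By Lemma~\ref{lem:op:orth}, condition (a) is equivalent to $\ran(P_S^\perp P_A)\perp\ran(P_S^\perp P_B)$, which (since these ranges are the closed spans of $\{P_S^\perp x_i:i\in A\}$ and $\{P_S^\perp x_j:j\in B\}$) is equivalent to $\ip{P_S^\perp x_i, P_S^\perp x_j}_\Hil = \ip{x_i, P_S^\perp x_j}_\Hil = 0$ for every $i\in A$, $j\in B$. Writing $P_S x_j = \sum_{k\in S}[\beta_j(S)]_k x_k$ and using the preceding identity $[\beta_j(S)]_S = \Sigma_S^{-1}\Sigma_{S,j}$, this inner product equals the $(i,j)$ entry of the Schur complement $\Sigma_{B,A}-\Sigma_{B,S}\Sigma_S^{-1}\Sigma_{S,A}$, which gives (c) exactly.

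Next I would prove the explicit formula~\eqref{eq:explit:pairwise:SEM} by a Hilbert space version of Frisch--Waugh--Lovell. Decompose $P_{Si}x_j = P_S x_j + Q x_j$ where $Q$ is the projection onto $\ran(P_{Si})\cap\ran(P_S)^\perp$, which is the one-dimensional subspace $\Span\{P_S^\perp x_i\}$ (this subspace is one-dimensional because $\Sigma\succ 0$ implies $P_S^\perp x_i\ne 0$). Hence $Qx_j = \ip{P_S^\perp x_i,x_j}_\Hil/\|P_S^\perp x_i\|^2\cdot P_S^\perp x_i$, and expanding $P_S^\perp x_i = x_i-\sum_{k\in S}[\beta_i(S)]_k x_k$ shows that the coefficient of $x_i$ in $P_{Si}x_j = \sum_{k\in Si}[\beta_j(Si)]_k x_k$ is exactly $\ip{x_i,P_S^\perp x_j}_\Hil/\ip{x_i,P_S^\perp x_i}_\Hil$. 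Uniqueness of the expansion on the basis $x_{Si}$ (again by $\Sigma\succ 0$) justifies reading off this coefficient. The equivalence (c)$\iff$(d) is then immediate from~\eqref{eq:explit:pairwise:SEM}: the numerator for $(i,j)$ is precisely the $(i,j)$ entry of the Schur complement in (c), and the denominator is strictly positive.

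Finally I would handle (a)$\iff$(b) via the orthogonal decomposition $P_A x_j = P_A P_S x_j + P_A P_S^\perp x_j$ for each $j\in B$. Expanding $P_A P_S x_j = \sum_{k\in S}[\beta_j(S)]_k P_A x_k$ and reading off the coefficient of $x_i$ for $i\in A$ using $P_A x_k = \sum_{\ell\in A}[\beta_k(A)]_\ell x_\ell$ gives the $x_i$-coefficient of $P_A P_S x_j$ equal to $\sum_{k\in S}[\beta_j(S)]_k[\beta_k(A)]_i$, while the $x_i$-coefficient of $P_A x_j$ is $[\beta_j(A)]_i$. Thus the identity in (b) asserts exactly that the $x_i$-coefficient of $P_A P_S^\perp x_j$ vanishes for all $i\in A$, $j\in B$; since $P_A P_S^\perp x_j\in\ran(P_A)$ and the $x_i$, $i\in A$, form a basis of $\ran(P_A)$ under $\Sigma\succ 0$, this coefficient-wise vanishing is equivalent to $P_A P_S^\perp x_j = 0$ for all $j\in B$, i.e., $P_A P_S^\perp P_B = 0$. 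The main obstacle is really step two, namely being careful that $\Sigma\succ 0$ is used correctly to guarantee both the one-dimensionality needed in the FWL step and the uniqueness of coefficient expansions used throughout; once this is in place, all four conditions collapse to the same scalar vanishing statement.
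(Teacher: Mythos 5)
Your proof is correct, and it uses the same basic toolbox as the paper's (the normal equations $[\beta_j(S)]_S=\Sigma_S^{-1}\Sigma_{S,j}$, self-adjointness and idempotence of $P_S^\perp$, and a rank-one update when passing from $S$ to $Si$), but the logical organization is genuinely different in one place worth noting. The paper proves (a)$\iff$(b) by coefficient comparison (as you do), then derives (c) from (b) by a fairly tedious matrix computation with the selector vectors $I_{S,i}$, and separately proves (a)$\iff$(d) together with \eqref{eq:explit:pairwise:SEM} by applying $P_S^\perp$ to $P_{Si}x_j=\sum_{k\in Si}\alpha_k x_k$ and pairing with $x_i$. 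You instead prove (a)$\iff$(c) directly from the entrywise identity $\ip{x_i,P_S^\perp x_j}_\Hil=\Sigma_{i,j}-\Sigma_{i,S}\Sigma_S^{-1}\Sigma_{S,j}$ (which is exactly the computation the paper carries out in the proof of Lemma~\ref{lem:Gram:char:pair}), and then get (c)$\iff$(d) for free from \eqref{eq:explit:pairwise:SEM}. This shortcuts the paper's (b)$\iff$(c) algebra and makes the Schur-complement form of (c) transparent. Your Frisch--Waugh--Lovell derivation of \eqref{eq:explit:pairwise:SEM} via $P_{Si}=P_S+Q$ with $Q$ the projection onto the one-dimensional space $\Span\{P_S^\perp x_i\}$ is an equivalent repackaging of the paper's computation ($P_S^\perp P_{Si}x_j=\alpha_i P_S^\perp x_i$ is the same rank-one statement); both rely on $\Sigma\succ 0$ in the same two places you identify, namely $P_S^\perp x_i\neq 0$ and uniqueness of expansions in the linearly independent family $\{x_k\}_{k\in Si}$. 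The only point stated without justification is that $\ran(P_{Si})\cap\ran(P_S)^\perp$ equals $\Span\{P_S^\perp x_i\}$; this follows from the dimension count $\dim\ran(P_{Si})=|S|+1$ under \eqref{asum:PSD:Sigma} together with $P_S^\perp x_i\in\ran(P_{Si})$, and is a one-line addition, not a gap.
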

We note that $(d)$ can be equivalently written as $[\beta_{j}(Si)]_i =[\beta_i(Sj)]_j = 0, \; \forall i \in A, j\in B$, due to symmetry. Most assertions in Proposition~\ref{prop:proj:Sigma:gen} are colloquially known (perhaps except part~(b)). For example, expressions similar to~\eqref{eq:explit:pairwise:SEM} have appeared before~\citep{Tian2007}, going back to the work of~\cite{cramer1946} and~\cite{dempster1969}.

\paragraph{Proof of Proposition~\ref{prop:all:POs:for:Tj}}
	By assumption $Ti := T \cup \{i\} \in \Tc_j = [m_j, M_j]$ and $T \cap \{i,j\} = \emptyset$.
	By Proposition~\ref{prop:proj:Sigma:gen}(d), $(j \perp i \mid T)$ holds if and only if
	\begin{align*}
	[\beta_j(T i)]_i = 0 \stackrel{(a)}{\iff} i \notin m_j \stackrel{(b)}{\iff} i \in M_j \setminus m_j
	\end{align*}
	where (a) is due to $\supp(\beta_j(Ti)) = m_j$ by the lattice property, and (b) is due to assumption $i \in M_j$. The proof is complete.
\endproof

\paragraph{Proof of Theorem~\ref{thm:all:POs}}
	Consider  statement $j \perp i \mid T$ which by convention implies $T \cap \{i,j\} = \emptyset$. Since $\big\{[m_j^\ell,M_j^\ell], \ell=1,\dots,K_j \big\}$ is a partition of $2^{[d]_j}$, there is a unique $k \in [K_j]$ such that $T \cup \{i\}$ belongs to $[m_j^k, M_j^k]$. This implies that $T \in [m_j^k, M_j^k \setminus \{i\}]$ and $ i \in M_j^k$. By Proposition~\ref{prop:all:POs:for:Tj}, $j \perp i \mid T$  holds iff $i \in M_j^k \setminus m_j^k$. The proof is complete.
\endproof

\subsection{Proofs of Section~\ref{sec:comp}}\label{sec:proof:comp}

The proof of Proposition~\ref{prop:lat:comp:complex} is immediate by an inspection of Algorithm~\ref{alg:lattice:comp} and Theorem~\ref{thm:poly:N} follows by combining Proposition~\ref{prop:lat:comp:complex} and Lemma~\ref{lem:decide:set:system}.

\paragraph{Proof of Lemma~\ref{lem:decide:set:system}}
	There are multiple algorithms to solve this problem. Here we describe one due to Brendan McKay. 
	To simplify notation let $r := d-1 = |[d]_j|$ and identify $[d]_j$ with $[r]$ by relabeling if need be. We also drop the subscript $j$ for from $\Tc_j^\ell$ and so on.
	Given disjoint lattices $\Tc^\ell = [m^\ell,M^\ell] \in 2^{[r]}, \; \ell=1,\dots,K$, and any set $S \subset [r]$, let 
	\begin{align*}
	Q(S) = \sum_{\ell=1}^K Q_\ell(S), \quad Q_\ell(S) = | \{ T \in \Tc^\ell :\; S \subset T\} |.
	\end{align*}
	Computing $Q_\ell(S)$ is easy and we consider it an atomic set operation. In particular, if $S \not \subset M^\ell$ then $Q_\ell(S) = 0$; otherwise, $Q_\ell(S) = 2^{|(M^\ell \setminus m^\ell) \setminus S|}$. Then computing $Q(S)$ can be done by at most $O(K)$ set operations.
	Note that for any set $S \subset [r]$ we have $Q(S) \le 2^{r-|S|}$ with equality if and only if all possible subsets of $[r]$ containing $S$ are present in $\Tcb := \cup_{\ell=1}^K \Tc^\ell$.
	
	Now, if $Q(\emptyset) = 2^{r}$, then power set $2^{[r]}$ is covered. Otherwise, there is $i_1 \in [r]$ such that $Q(\{i_1\}) < 2^{r-1}$ which can be found by trying all the $r$ elements of $[r]$. Then, one can find $\{i_1,i_2\}$ such that $Q(\{i_1,i_2\}) < 2^{r-2}$ by trying all $r$ possible cases for $i_2$. Continuing in this manner, we have either of the following two: 
	\begin{enumerate}
		\item After at most $r-1$ rounds, we have $S \subset [d]_j$ such that $Q(S) = 0$. This $S$ has the desired property and the algorithm terminates.
		\item We have $S = \{i_1,\dots,i_{r-1}\}$, with $Q(S) < 2$. Since we are not in Case~1, we should have $Q(S) = 1$. This means that either (a) $S \in \Tcb$ in which case the ground set $[r]$ cannot belong to $\Tcb$, 		hence we output $[r]$, or  (b) the ground set $[r] \in \Tcb$ in which case the original $S$ cannot belong to $\Tcb$ and we output $S$.
	\end{enumerate}
	In either case, the overall complexity is at most $O(r^2 K)$ set operations.
\endproof

\subsection{Proof of Theorem~\ref{thm:sep:char}}
\label{sec:seperation}
Throughout the proof of this theorem, we work under the assumption of $\Hil$-Markov perfectness, i.e., all the lemmas in this subsection are stated under that assumption.
We note that
\begin{align}\label{eq:exchange}
    S' \in \Tc_j(S) \iff \Tc_j(S) = \Tc_j(S') \iff S \in \Tc_j(S').
\end{align}
\begin{lem}\label{lem:sep:1}
    Let $A \subset S \subset  [d]_j$. Then, $A \in \Tc_j(S)$ iff $A$ separates $j$ from $S\setminus A$.
\end{lem}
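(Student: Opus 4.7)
The plan is to apply Lemma~\ref{lem:inclusion} to reduce membership in $\Tc_j(S)$ to a partial orthogonality statement, and then invoke $\Hil$-Markov perfectness to translate that statement into graph separation.

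First, since $A \subset S$, I would apply Lemma~\ref{lem:inclusion} with the inclusion $A \subset S$ (playing the roles of the ``$S \subset T$'' in that lemma), with $L = P_j$. This yields the equivalence
\begin{align*}
    P_S P_j = P_A P_j \iff P_{S\setminus A}\, P_A^\perp\, P_j = 0.
\end{align*}
By the definition of $\Tc_j(S)$ in~\eqref{eq:lattice:proj:def}, the left-hand side is exactly the statement that $A \in \Tc_j(S)$. So the problem reduces to characterizing when $P_{S\setminus A}\, P_A^\perp\, P_j = 0$.

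Next I would check the disjointness needed to invoke separation. Since $A \subset S \subset [d]_j$, the three sets $A$, $S \setminus A$, and $\{j\}$ are pairwise disjoint: the first two by construction, and both are disjoint from $\{j\}$ because $j \notin S$. Therefore the hypotheses of Definition~\ref{defn:perfectness} apply with the triple $(A, \{j\}, S\setminus A)$ where $A$ plays the role of the separator. Under $\Hil$-Markov perfectness,
\begin{align*}
    P_{S\setminus A}\, P_A^\perp\, P_j = 0 \iff A \text{ separates } \{j\} \text{ and } S\setminus A \text{ in the PCG } G.
\end{align*}
Combining the two equivalences gives exactly the claim of the lemma.

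There is no real obstacle here: the argument is essentially a direct chaining of Lemma~\ref{lem:inclusion} with the definition of perfectness, once one notes that the relevant sets are disjoint. The only thing worth highlighting in the write-up is being careful with the roles in Lemma~\ref{lem:inclusion}: the ``smaller'' set is $A$ and the ``larger'' set is $S$, so the residual set is $R = S \setminus A$ and the operator $L$ is chosen as the projection $P_j$ onto the singleton variable we are regressing.
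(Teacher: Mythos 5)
Your proof is correct and is essentially identical to the paper's: both reduce $A \in \Tc_j(S)$ to $P_{S\setminus A} P_A^\perp P_j = 0$ via Lemma~\ref{lem:inclusion} (with $L = P_j$) and then invoke $\Hil$-Markov perfectness to translate that into graph separation. The only difference is the direction in which the chain of equivalences is presented, plus your explicit (and harmless) check of disjointness.
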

\begin{proof}
    Let $R = S \setminus A$. Then, by $\Hil$-Markov perfectness, $A$ separates $j$ from $S\setminus A$ iff $P_R \perp P_j \mid P_A$. This means $P_R P_A^\perp P_j = 0$ which is equivalent to $P_A P_j = P_S P_j$, by Lemma~\ref{lem:inclusion}.
\end{proof}

We can now write $m_j(S)$, the minimum element of $\Tc_j(S)$, as
\begin{align*}
    m_j(S) =
     \bigcap \{A \subset S: \; A \in \Tc_j(S) \} = \bigcap \big\{A \subset S:\; \text{$A$ separates $j$ and $S \setminus A$.} \big\}
\end{align*}
the first equality is because we can restrict to subsets of $S$ when finding the minimum, due to $\Tc_j(S)$ being closed under intersections (lattice property), and the second is by Lemma~\ref{lem:sep:1}. This proves part~(a) of Theorem~\ref{thm:sep:char}.

\begin{lem}\label{lem:sep:2}
    Let $B \subset [d]_j$ and $A \in \Tc_j(S)$ be disjoint. Then $A$ separates $B$ and $j$ if and only if $A \cup B \in \Tc_j(S)$.
\end{lem}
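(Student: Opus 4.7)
The plan is to chain together three equivalences: the definition of $\Hil$-Markov perfectness, Lemma~\ref{lem:inclusion}, and the assumption $A\in\Tc_j(S)$. No new machinery seems necessary; the statement should follow by carefully unwinding the definitions in terms of projection operators.

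First, by the perfectness assumption (Definition~\ref{defn:perfectness}), since $A$, $B$, and $\{j\}$ are pairwise disjoint (by hypothesis $A\cap B=\emptyset$ and $A,B\subset[d]_j$), the graphical statement ``$A$ separates $B$ and $j$'' is equivalent to the projection identity $P_B P_A^\perp P_j = 0$.

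Next, I would translate the target membership $A\cup B\in\Tc_j(S)$ into a projection identity involving $A$ rather than $S$. Since $A\subset A\cup B$, Lemma~\ref{lem:inclusion} (applied with $S\leftarrow A$, $T\leftarrow A\cup B$, $R\leftarrow B$, and $L\leftarrow P_j$) gives
\begin{align*}
P_{A\cup B} P_j = P_A P_j \iff P_B P_A^\perp P_j = 0.
\end{align*}
Finally, since $A\in\Tc_j(S)$ means $P_A P_j = P_S P_j$ by Definition~\ref{defn:nhbd:lattice}, the identity $P_{A\cup B}P_j = P_A P_j$ is in turn equivalent to $P_{A\cup B}P_j = P_S P_j$, i.e., $A\cup B\in\Tc_j(S)$. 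Stringing the three equivalences together yields the lemma.

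There is no real obstacle here: the argument is a routine bookkeeping exercise that exploits Lemma~\ref{lem:inclusion} as the bridge between ``extending a set in $\Tc_j(S)$'' and ``partial orthogonality with respect to the smaller set,'' and then invokes perfectness to convert the latter into graph separation. The only point requiring a bit of care is verifying the disjointness hypotheses needed to invoke perfectness for the triple $(B,A,\{j\})$, which are given directly by the assumptions of the lemma.
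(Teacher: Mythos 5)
Your proof is correct. Every step checks out: perfectness converts ``$A$ separates $B$ and $j$'' into $P_B P_A^\perp P_j = 0$ (the disjointness of $B$, $A$, $\{j\}$ is indeed guaranteed by the hypotheses); Lemma~\ref{lem:inclusion} with $S\leftarrow A$, $T\leftarrow A\cup B$, $R\leftarrow B$, $L\leftarrow P_j$ converts that into $P_{A\cup B}P_j = P_A P_j$; and the assumption $A\in\Tc_j(S)$, i.e.\ $P_AP_j=P_SP_j$, lets you swap $P_AP_j$ for $P_SP_j$ to land on $A\cup B\in\Tc_j(S)$.

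The route differs from the paper's in organization, though the underlying machinery is the same. The paper argues entirely at the level of sets and lattices: from $A\in\Tc_j(S)$ it gets $\Tc_j(A)=\Tc_j(S)$, uses the exchange relation~\eqref{eq:exchange} to rewrite $A\cup B\in\Tc_j(A)$ as $A\in\Tc_j(A\cup B)$, and then invokes Lemma~\ref{lem:sep:1} (which is where perfectness and Lemma~\ref{lem:inclusion} are actually used). You instead bypass Lemma~\ref{lem:sep:1} and the exchange trick entirely and unwind everything to a single chain of projection identities. Your version is more self-contained and arguably more transparent about where perfectness enters; the paper's version buys brevity and reuses Lemma~\ref{lem:sep:1} as a black box, which keeps the operator-level reasoning confined to one place. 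Both are equally rigorous.
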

\begin{proof}
     Since $A \in  \Tc_j(S)$, we have $\Tc_j(A) = \Tc_j(S)$.  Hence, $A \cup B \in \Tc_j(S)$ is equivalent to $A \cup B \in \Tc_j(A)$ which is equivalent to $A \in \Tc_j(A \cup B)$ by~\eqref{eq:exchange}. By Lemma~\ref{lem:sep:1}, this latter statement is equivalent to $A$ separates $j$ and $B$.
\end{proof}
Since $m_j(S)$ is the smallest element of $\Tc_j(S)$, we get the following characterization:
\begin{align}\label{eq:sep:char}
    \Tc_j(S) = \{ T \subset [d]_j:\; \text{$m_j(S)$ separates $j$ and $T \setminus m_j(S)$}\}.
\end{align}
To see this, fix $S$ and let $S^* = m_j(S)$. Then $T \in \Tc_j(S)$ iff $S^{*} \subset T$ by minimality 
and $S^*$ separates $j$ and $T \setminus S^*$ by Lemma~\ref{lem:sep:2}.
From~\eqref{eq:sep:char}, it follows that $M_j(S) = m_j(S) \cup B$ where $B$ is the largest set that can be separated from $j$ by $m_j(S)$. This proves part~(b). Part~(c) also follows from~\eqref{eq:sep:char}.

\subsection{Proof of Theorem~\ref{thm:conn:comp}}

Let us introduce some new notations. Recall that we write $\sep(A,C,B)$ for disjoint sets $A,B$ and $C$ when $C$ separates $A$ and $B$. This means that any path from a node $i \in A$ to node $j \in B$ should pass through a node $k \in C$. This includes the case where there is no path from $A$ to $B$.

\begin{figure}[t]
    \centering
    \subfloat[]{
    \begin{tikzpicture}
                \begin{scope}[every node/.style={circle,thick,draw, inner sep =.5mm}]
                    \node (A) at (0,0) {$A$};
                    \node (C_1) at (1,.6) {$C_1$};
                    \node (C_2) at (1,-.6) {$C_2$};
                    \node (B) at (2,0) {$B$};   
                \end{scope}

                \draw[dashed] (A) -- (C_1);
                \draw[dashed] (A) -- (C_2);
                \draw[thick] (C_1) -- (C_2);
            
            \end{tikzpicture}   
    }       
    \qquad
    \subfloat[]{
        \begin{tikzpicture}
            \begin{scope}[every node/.style={circle,thick,draw, inner sep =.5mm}]
                \node (A) at (0,0) {$A$};
                \node (C_1) at (1,.6) {$C_1$};
                \node (C_2) at (1,-.6) {$C_2$};
                \node (B) at (2,0) {$B$};
            \end{scope}

            \draw[dashed] (A) -- (C_1);
            \draw[dashed] (A) -- (C_2);
            \draw[dashed] (C_1) -- (B);
        \end{tikzpicture}   
        \label{fig:1:a}
    }

\caption{Illustration of the reduction property. 
    We break the cases respecting $\sep(A,C_1 \uplus C_2,B)$ into two groups, depending on whether there is a path between $C_1$ and $C_2$ (a), or  not (b). The dashed lines are possible paths.  In none of the cases there could be a path between $C_2$ and $B$ because of the assumption $\sep(C_2,A,B)$. The same assumption precludes a path between $C_1$ and $B$ in cases in (a), because of the existence of a path from $C_2$ to $C_1$. As seen from this figure, in all possible cases, $A$ and $B$ are separated by $C_1$.
 \label{fig:reduct:property}}\end{figure}
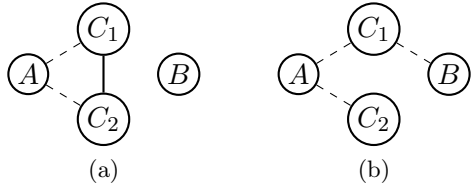

 \begin{lem}
    Separation has these properties:
    \begin{itemize}
        \item \emph{Additive property}:  $\sep(A,C_1,B_1)$ and $\sep(A,C_2,B_2)$ implies $\sep(A,(C_1\cup C_2),(B_1 \cup B_2))$.

        \item \emph{Reduction property}: $\sep(A,C_1 \uplus C_2,B)$ and $\sep(C_2,A,B)$ implies $\sep(A,C_1,B)$.
    \end{itemize}
        
 \end{lem}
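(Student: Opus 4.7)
The plan is to argue both properties directly from the combinatorial definition of separation: $\sep(A,C,B)$ asserts that $A$, $C$, $B$ are pairwise disjoint and every path joining a node of $A$ to a node of $B$ contains an intermediate vertex in $C$. Both claims then reduce to elementary path-tracing arguments on the underlying PCG.

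For the \emph{additive property}, I would take an arbitrary path $p$ from some $a \in A$ to some $b \in B_1 \cup B_2$. The endpoint lies in exactly one of the $B_i$'s; if $b \in B_1$, then $\sep(A,C_1,B_1)$ forces $p$ to hit $C_1 \subset C_1 \cup C_2$, and symmetrically if $b \in B_2$, we obtain a node of $C_2$ on $p$. Either way $p$ meets $C_1 \cup C_2$. The remaining work is to verify pairwise disjointness of $A$, $C_1 \cup C_2$, and $B_1 \cup B_2$; the disjointness of $A$ from the union sets is immediate from the two hypotheses, while the cross conditions $C_i \cap B_j = \emptyset$ for $i \neq j$ hold in the usage of this lemma inside the proof of Theorem~\ref{thm:conn:comp}, where the $B_j$'s and $C_i$'s are identified with subsets of disjoint connected components.

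For the \emph{reduction property}, I would argue by contradiction. Assume $\sep(A, C_1, B)$ fails, so some path $p$ runs from $a \in A$ to $b \in B$ while avoiding $C_1$. Since $\sep(A, C_1 \cup C_2, B)$ forces $p$ to intersect $C_1 \cup C_2$, and $C_1$ is avoided, $p$ must intersect $C_2$; let $c^*$ be the \emph{last} node of $C_2$ on $p$, traversing from $a$ to $b$, and let $p_1$ be the tail of $p$ from $c^*$ to $b$. By construction $p_1$ avoids $C_1$ (inherited from $p$) and $C_2$ (by maximality of $c^*$). Applying $\sep(C_2, A, B)$ to $p_1$ yields a vertex $a^* \in A$ lying strictly between $c^*$ and $b$ on $p$. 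The sub-path $p_2$ of $p_1$ from $a^*$ to $b$ is then a path from $A$ to $B$ meeting neither $C_1$ nor $C_2$, contradicting $\sep(A, C_1 \cup C_2, B)$.

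The only genuinely non-trivial ingredient—and the step I expect to be the main obstacle—is the choice of $c^*$ as the \emph{last} occurrence of $C_2$ on $p$, since this is precisely what guarantees that the exhibited terminal sub-path $p_2$ is disjoint from $C_2$. Everything else is bookkeeping with disjointness and the definition of separation; the same idea is visually captured in the case analysis of the figure accompanying the lemma.
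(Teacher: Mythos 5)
Your proof is correct and follows essentially the same route as the paper, which proves only the reduction property by extracting from a hypothetical $C_1$-avoiding path a sub-path from $C_2$ to $B$ and contradicting $\sep(C_2,A,B)$. Your version is in fact slightly more careful: the paper dismisses the possibility that this sub-path meets $A$ with ``the paths do not self-cross,'' which only accounts for the initial vertex $a$, whereas your choice of the \emph{last} $C_2$-vertex together with the second contradiction via $p_2$ also handles a tail that re-enters $A$; and you supply the (routine) additive property and the disjointness bookkeeping that the paper omits.
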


\begin{proof}
    We only show the reduction property, for which it is enough to show that there is no path from $A$ to $B$ that passes through $C_2$ and not $C_1$. If so, a portion of it gives a path from $C_2$ to $B$ that does not pass through $C_1$, and clearly not $A$ since the paths do not self-cross. But then $A$ does not separate $C_2$ and $B$, a contradiction. Figure~\ref{fig:reduct:property} illustrates the argument.
\end{proof} 

\paragraph{Proof of Theorem~\ref{thm:conn:comp}}
    By induction we can reduce to the case where the nodes are partitioned into two disjoint 
    components  $G_1$ and $G_2$, after the removal of $j$. This implies $\sep(G_1,j,G_2)$. For any set $A$, let $A_k := A \cap G_k, k =1,2$. We have
    \begin{align}\label{eq:sep:decomp}
        \sep(j,A,(S\setminus A)) \quad \iff\quad  \sep(j,A_1,(S_1\setminus A_1)) \quad \text{and}\quad 
        \sep(j,A_2,(S_2\setminus A_2))
    \end{align}
    To see this, note that the RHS implies $\sep(j,A, (S_1\setminus A_1)) \cup (S_2\setminus A_2)$ by the additive property. But $(S_1\setminus A_1) \cup (S_2\setminus A_2) = S \setminus A$ since $G_1$ and $G_2$ are disjoint. 

    Now assume the LHS. Then,  clearly $\sep(j,A,(S_1\setminus A_1))$. But we also have $\sep(A_2,j,(S_1\setminus A_1))$. Applying the reduction property, we get $\sep(j,A_1,(S_1\setminus A_1))$. The other implication is similar, hence we get the RHS.

    Equipped with~\eqref{eq:sep:decomp}, we can prove part~(a). Let $\Ss$ be the smallest subset of $S$ separating $j$ and $S \setminus \Ss$ (see \eqref{eq:Sstar}), and let $\Ss_1$ and $\Ss_2$ be its components (i.e, $\Ss_k = \Ss \cap G_k,\;\; k = 1,2$). Then, by~\eqref{eq:sep:decomp} $\Ss_k$ separates $j$ and $S_k \setminus\Ss_k$, for $k=1,2$. It remains to show that $\Ss_k$ is the smallest such set (for each $k$). Suppose that there is a proper subset $S'_1$ of $\Ss_k$ that separates $j$ and $S \setminus S'_1$. Then, applying~\eqref{eq:sep:decomp} with $A_1 = S'_1$ and $A_2 = \Ss_2$ and $A = S':= S'_1 \cup \Ss_2$, we conclude that $S'$ separates $j$ and $S\setminus S'$. But $S'$ is a proper subset of $\Ss$, violating the assumption that $\Ss$ is the minimal set with such property. It follows that we should have $m_j(S_1) = \Ss_1$ and $m_j(S_2) = \Ss_2$ which proves part~(a).

    For part~(b), let $\Ss_k = m_j(S_k), k =1,2$ and $\Ss = m_j(S)$. From part~(a), we have $\Ss = \Ss_1 \cup \Ss_2$. Fix $r \in [d]_j$. We claim that  
    \begin{align}\label{eq:sep:decomp:2}
        \sep(r,\Ss,j) \quad \iff \quad \sep(r,\Ss_1,j) \;\;\text{or}\;\; \sep(r,\Ss_2,j).
    \end{align}
    The $\Leftarrow$ implication is clear. For the other direction, assume that $r$ is separated from $j$ by $\Ss$, i.e., all the paths from $r$ to $j$ pass through $\Ss$. WLOG, assume that $r \in G_1$ (the case $r \in G_2$ is similar). Suppose that there is a path from $r$ to $j$ that passes through $\Ss_2$. A portion of this path gives a path from $r$ to $\Ss_2$, hence to $G_2$, that does not pass through $j$, contradicting $\sep(G_1,j,G_2)$. Hence all the paths from $r$ to $j$ should pass through $\Ss_1$, i.e., $\sep(r,\Ss_1,j)$, proving~\eqref{eq:sep:decomp:2}.
    From~\eqref{eq:sep:decomp:2}, we conclude that $E_j(\Ss) = E_j(\Ss_1) \cup E_j(\Ss_2)$. Combined with characterization of $M_j(S)$ in Theorem~\ref{thm:sep:char}(b), this proves the first equality in part(b).
    
    To see the second equality in~(b), recall that $\Tc_j(S_k;G_k)$ is the lattice with ground set restricted to $G_k \cup \{j\}$. First, the minimal element of $\Tc_j(S_k;G_k)$ is the same as that of $\Tc_j(S_k)$. This is true since a subset $A$ of $S_k$ separates $j$ and $S_k \setminus A$ in $G_k \cup \{j\}$ iff it does so in $G$. (If the separation happens in $G_k \cup \{j\}$ but not in $G$, there will be a path from $S_k \setminus A$ to $j$ passing through some $G_r$, $r\neq k$ contradicting disjointness of $G_k$ and $G_r$.) Even more directly, a minimal subset of $\Tc_j(S_k)$ is a subset of $S_k$, hence $G_k$, and the restriction in  $\Tc_j(S_k;G_k)$ is automatically satisfied.  Knowing that the minimal element of $\Tc_j(S_k;G_k)$ is $S^*_k$, we invoke Theorem~\ref{thm:sep:char}(b) restricted to $G_k \cup \{j\}$ to conclude that $M_j(S_k;G_k) = S_k^* \cup E_j(S_k^*;G_k)$ where $E_j(S_k^*;G_k) = \{i \in G_k:\;  \text{$S^*_k$ separates $i$ and $j$}  \}$. Going through the argument leading to~\eqref{eq:sep:decomp:2}, it is clear that we can replace~\eqref{eq:sep:decomp:2} with
    \begin{align}
         \sep(r,\Ss,j) \quad \iff \quad [\sep(r,\Ss_1,j) \;\text{and}\;r \in G_1 ] \;\;\text{or}\;\; [\sep(r,\Ss_2,j)  \;\text{and}\;r \in G_2 ]
    \end{align}
    showing that $E_j(\Ss) = E_j(\Ss_1;G_1) \, \cup\, E_j(\Ss_2;G_2)$. Combined with the expression for $M_j(S_k;G_k)$, the desired result follows.
\endproof

\section*{Acknowledgment}
This work was supported by NSF grant IIS-1546098.

\printbibliography

\appendix

\section{Details}\label{app:detials}

\subsection{Details of the neighborhood lattice}\label{sec:details:nbhd:lat}
\begin{lem}\label{lem:equiv:lattice:defs}
	Representations~\eqref{eq:lattice:proj:def} and~\eqref{eq:lattice:alt} are equivalent.
\end{lem}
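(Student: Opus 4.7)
}
Write $\Tc^{(1)}_j(S)$, $\Tc^{(2)}_j(S)$, $\Tc^{(3)}_j(S)$ for the three collections defined by $P_TP_j = P_SP_j$, by $\beta_j(T) = \beta_j(S)$, and by $\supp(\beta_j(T)) = \supp(\beta_j(S))$, respectively. The plan is to show the chain $\Tc^{(1)}_j(S) = \Tc^{(2)}_j(S) \subset \Tc^{(3)}_j(S) \subset \Tc^{(2)}_j(S)$, where the last inclusion is where positive definiteness of $\Sigma$ really gets used.

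For $\Tc^{(1)}_j(S) = \Tc^{(2)}_j(S)$, I would first note that since $\ran(P_j) = \Span\{x_j\}$ is one-dimensional, two operators of the form $P_T P_j$ and $P_S P_j$ agree as operators on $\Hil$ if and only if they agree on $x_j$, i.e.\ $P_T x_j = P_S x_j$. Next, by Definition~\ref{defn:nhbd}, $P_T x_j = \beta_j(T)^T x$ (a linear combination of the $x_i$'s) and similarly for $S$. Under~\eqref{asum:PSD:Sigma}, the Gram matrix $\Sigma$ is positive definite, so the vectors $x_1,\dots,x_d$ are linearly independent in $\Hil$, and any representation of an element of $\Span\{x_1,\dots,x_d\}$ as a linear combination of the $x_i$ has unique coefficients. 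Consequently $\beta_j(T)^T x = \beta_j(S)^T x$ forces $\beta_j(T) = \beta_j(S)$ as elements of $\R^d$, proving the equivalence.

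The inclusion $\Tc^{(2)}_j(S) \subset \Tc^{(3)}_j(S)$ is immediate since equal vectors have equal supports. For the reverse inclusion, suppose $U := \supp(\beta_j(T)) = \supp(\beta_j(S))$ and note that $U \subset T \cap S$. Then $P_T x_j = \beta_j(T)^T x \in \Span(x_U)$, while $P_T x_j$ is the projection of $x_j$ onto $\Span(x_T) \supset \Span(x_U)$, so the residual $x_j - P_T x_j$ is orthogonal to $\Span(x_T)$ and hence to $\Span(x_U)$. By uniqueness of the projection onto $\Span(x_U)$, this gives $P_T x_j = P_U x_j$, and then linear independence of $\{x_i\}_{i\in U}$ (again from $\Sigma \succ 0$) forces $\beta_j(T) = \beta_j(U)$. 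The identical argument applied to $S$ yields $\beta_j(S) = \beta_j(U)$, so $\beta_j(T) = \beta_j(S)$ and hence $T \in \Tc^{(2)}_j(S)$.

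The only subtle step is the last one: one must recognize that having the same support is enough to collapse both $\beta_j(T)$ and $\beta_j(S)$ to the common ``reduced'' regression $\beta_j(U)$, which relies crucially on positive definiteness of $\Sigma$ (otherwise one could have distinct supports giving the same projection, or the same support giving different coefficient vectors). No other nontrivial ingredients are required.
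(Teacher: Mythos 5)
Your proof is correct and follows essentially the same route as the paper: identify $P_T x_j = \beta_j(T)^T x$, use positive definiteness of $\Sigma$ (linear independence of the $x_i$) to pass between equality of projections and equality of coefficient vectors, and reduce operator identities $P_TP_j=P_SP_j$ to their action on $x_j$. The only difference is that the paper dispatches the support-equality direction with a one-line remark, whereas you supply the actual argument (collapsing both regressions onto the common support $U$ via uniqueness of the projection onto $\Span(x_U)$); that is a faithful and complete filling-in of the step the paper leaves to the reader.
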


\begin{proof}[Proof of Lemma~\ref{lem:equiv:lattice:defs}]
	Let $\Tc_j(S)$ be as defined in~\eqref{eq:lattice:proj:def}.
	To see the first equality in~\eqref{eq:lattice:alt}, note that $\beta_j(S)^\tpose \dvec = P_S \dvec_j$. Since the Gram matrix of $\dvec$ is positive definite, $\beta_j(S) = \beta_j(T)$ is equivalent to $\beta_j(S)^\tpose \dvec = \beta_j(T)^\tpose \dvec$, and hence to $P_S \dvec_j = P_T \dvec_j$. The result follows since for any operator $L$, $L \dvec_j = 0$ is equivalent to $L P_j = 0$. (We have $L P_j y = \ip{x_j,y} L x_j$ for all $y$, assuming $\norm{x_j}_{\Hil}=1$.) The second equality in~\eqref{eq:lattice:alt} follows from the first equality and positive definiteness of the Gram matrix. 
\end{proof}

\subsection{PCG details}

\subsubsection{Partial orthogonality via Gram matrix}\label{sec:parorth:gram}
The following result is proved in Appendix~\ref{sec:proof:proj:Sigma}:
\begin{lem}\label{lem:Gram:char:pair}
	$|\Sigma_{Si,Sj}| = 0$ is equivalent to $P_i P_S^\perp P_j = 0$, for all $i,j$ and $S \subset [d]_{ij}$. \end{lem}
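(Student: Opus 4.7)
}

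The plan is to show that both sides are equivalent to the vanishing of the Schur complement $\Sigma_{i,j} - \Sigma_{i,S}\Sigma_S^{-1}\Sigma_{S,j}$, which sits naturally between them. First I would reduce the operator identity to a scalar inner product. Since $P_j$ is the orthogonal projection onto the one-dimensional span of $x_j$, for any $y \in \Hil$ we have $P_j y = \ip{x_j,y}_\Hil \, x_j / \norm{x_j}_\Hil^2$ (recall $\norm{x_j}_\Hil > 0$ by \eqref{asum:PSD:Sigma}). Hence
\[
    P_i P_S^\perp P_j\, y \;=\; \frac{\ip{x_j,y}_\Hil}{\norm{x_j}_\Hil^2}\, P_i P_S^\perp x_j,
\]
so $P_i P_S^\perp P_j = 0$ iff $P_i P_S^\perp x_j = 0$, and applying the same reasoning to $P_i$ this is iff $\ip{x_i, P_S^\perp x_j}_\Hil = 0$.

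Next I would compute this inner product in terms of $\Sigma$. Since $P_S x_j$ lies in $\Span(x_S)$, we may write $P_S x_j = \sum_{k \in S} \alpha_k x_k$, and the defining orthogonality conditions $\ip{x_\ell, x_j - P_S x_j}_\Hil = 0$ for $\ell \in S$ yield $\Sigma_{S,S}\, \alpha = \Sigma_{S,j}$, hence $\alpha = \Sigma_S^{-1}\Sigma_{S,j}$ (using invertibility of $\Sigma_S$ from \eqref{asum:PSD:Sigma}). Therefore
\[
    \ip{x_i, P_S^\perp x_j}_\Hil \;=\; \Sigma_{i,j} \,-\, \Sigma_{i,S}\,\Sigma_S^{-1}\,\Sigma_{S,j}.
\]

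Finally I would connect the right-hand side to the determinant $|\Sigma_{Si,Sj}|$. Ordering rows as $(S,i)$ and columns as $(S,j)$, the matrix $\Sigma_{Si,Sj}$ has the block form
\[
    \Sigma_{Si,Sj} \;=\; \begin{pmatrix} \Sigma_{S,S} & \Sigma_{S,j} \\ \Sigma_{i,S} & \Sigma_{i,j} \end{pmatrix},
\]
and the Schur complement identity gives
\[
    |\Sigma_{Si,Sj}| \;=\; |\Sigma_S|\,\bigl(\Sigma_{i,j} - \Sigma_{i,S}\Sigma_S^{-1}\Sigma_{S,j}\bigr).
\]
Because $|\Sigma_S| > 0$ under \eqref{asum:PSD:Sigma}, vanishing of $|\Sigma_{Si,Sj}|$ is equivalent to vanishing of the Schur complement, which by the two previous steps is equivalent to $P_i P_S^\perp P_j = 0$. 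The case $S = \emptyset$ is handled by the same chain, with $P_S^\perp = I$ and $\Sigma_{Si,Sj} = \Sigma_{i,j}$ being a $1\times 1$ scalar.

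The argument is essentially routine; the only mild subtlety is the block-determinant step, which requires that the submatrix $\Sigma_{Si,Sj}$ be treated as a $(|S|+1)\times(|S|+1)$ matrix with rows and columns indexed in a matched order so that $\Sigma_S$ appears as a principal block. Given the notational conventions already fixed in the Preliminaries, no real obstacle remains.
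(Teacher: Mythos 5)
Your proposal is correct and follows essentially the same route as the paper's proof: reduce the operator identity to the scalar $\ip{x_i,P_S^\perp x_j}_\Hil=0$ using the rank-one structure of $P_i$ and $P_j$, compute $P_S x_j$ via the normal equations to identify this scalar with the Schur complement $\Sigma_{i,j}-\Sigma_{i,S}\Sigma_S^{-1}\Sigma_{S,j}$, and conclude by the block-determinant identity together with $|\Sigma_S|>0$. The only cosmetic difference is that the paper packages the same computation through the operators $\Lc_S a=\sum_{i\in S}a_i x_i$ and the formula $P_S=\Lc_S\Sigma_S^{-1}\Lc_S^*$.
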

\noindent
It is worth noting that the lemma is not restricted to random variables and holds in the general Hilbert space setup of Definition~\ref{def:pairwise:L2:abs:PCG}. Since PCGs are preserved under Hilbert space isometries (Appendix~\ref{sec:proof:pairwise:global}), by passing to the case where $\gv$ is a zero-mean Gaussian random vector  with covariance $\Sigma$ (i.e., a special case of Example~\ref{exa:usual:rvs}), and using the known results on Gaussian pairwise conditional independence \cite[Prop.~5.2]{Lauritzen1996}, we conclude that~\eqref{eq:pairwise:L2} in Definition~\ref{defn:pcg} is in fact equivalent to
\begin{align}\label{eq:inv:Gram}
\text{$i \nsim j$ in $G$} \iff [\Sigma^{-1}]_{i,j} = 0
\end{align}
or equivalently $| \Sigma_{Si,Sj}| = 0$ where $S = [d]_{ij}$. That is, Lemma~\ref{lem:Gram:char:pair}, applied with with $S = [d]_{ij}$, recovers this well-known result. Note, however, that Lemma~\ref{lem:Gram:char:pair} is stronger than~\eqref{eq:inv:Gram}, since it establishes the equivalence for any $S \subset [d]_{ij}$.

\medskip
The ternary relation~\eqref{eq:partial:orth} among projections is also completely characterized by the Gram matrix $\Sigma$:
\begin{lem}\label{lem:Gram:char:gen}
	$\Sigma_{B,A} - \Sigma_{B,S} \Sigma_{S}^{-1} \Sigma_{S,A} = 0$ is equivalent to $P_A P_S^\perp P_B = 0$, for disjoint $A,S,B \subset [d]$.
\end{lem}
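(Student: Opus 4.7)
The plan is to reduce the operator equation $P_A P_S^\perp P_B = 0$ to a collection of inner product equations indexed by $A \times B$, then compute those inner products explicitly in terms of $\Sigma$ via the normal equations for $P_S$. Since assumption~\eqref{asum:PSD:Sigma} guarantees $\Sigma_S \succ 0$, the matrix $\Sigma_S^{-1}$ appearing in the statement is well-defined.

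First I would observe, using Lemma~\ref{lem:op:orth}, that $P_A P_S^\perp P_B = 0$ is equivalent to $\ran(P_S^\perp P_B) \perp \ran(P_A)$, and since $\ran(P_A) = \Span\{x_i : i \in A\}$ and $\ran(P_B) = \Span\{x_j : j \in B\}$, this collapses to the pointwise statement
\begin{align*}
\langle x_i,\; P_S^\perp x_j\rangle_\Hil = 0 \quad \text{for all } i \in A,\; j \in B.
\end{align*}
So the lemma reduces to showing that $\langle x_i, P_S^\perp x_j\rangle$ equals the $(i,j)$ entry of the Schur-complement-like matrix $\Sigma_{A,B} - \Sigma_{A,S}\Sigma_S^{-1}\Sigma_{S,B}$.

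Next I would compute $P_S x_j$ explicitly. Since $P_S x_j$ lies in $\Span\{x_k : k \in S\}$, write $P_S x_j = \sum_{k\in S} c_k x_k$. The defining property $\langle x_k, x_j - P_S x_j\rangle = 0$ for every $k \in S$ gives the normal equations $\Sigma_S c = \Sigma_{S,j}$, and invertibility of $\Sigma_S$ (from~\eqref{asum:PSD:Sigma}) yields $c = \Sigma_S^{-1}\Sigma_{S,j}$. Substituting,
\begin{align*}
\langle x_i, P_S^\perp x_j\rangle = \langle x_i, x_j\rangle - \langle x_i, P_S x_j\rangle = \Sigma_{i,j} - \Sigma_{i,S}\Sigma_S^{-1}\Sigma_{S,j}.
\end{align*}
Assembling these scalar identities over $i \in A$ and $j \in B$ gives the matrix identity $(\langle x_i, P_S^\perp x_j\rangle)_{i\in A, j\in B} = \Sigma_{A,B} - \Sigma_{A,S}\Sigma_S^{-1}\Sigma_{S,B}$, and by the reduction in the first step this matrix vanishes if and only if $P_A P_S^\perp P_B = 0$. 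Taking the transpose (or equivalently, swapping the roles of $A$ and $B$, which is legitimate by symmetry of Definition~\ref{defn:PO}) yields the form stated in the lemma.

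I do not expect a serious obstacle here; the only care required is in handling the case $S = \emptyset$ separately (where $P_S^\perp = I$ and the condition becomes $\Sigma_{B,A} = 0$, matching the parenthetical remark in the analogous Proposition~\ref{prop:proj:Sigma:gen}(c)), and in recording clearly that $\Sigma_S$ is invertible so that $\Sigma_S^{-1}$ makes sense. This same computation specializes, upon taking $A = \{i\}$ and $B = \{j\}$ and $S \subset [d]_{ij}$, to a proof of Lemma~\ref{lem:Gram:char:pair} via the standard Schur-complement identity $|\Sigma_{Si,Sj}| = |\Sigma_S|\,(\Sigma_{i,j} - \Sigma_{i,S}\Sigma_S^{-1}\Sigma_{S,j})$, so the two lemmas can be proved in one stroke.
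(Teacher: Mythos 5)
Your proof is correct. The reduction in your first step is sound: since $\ran(P_S^\perp P_B)$ is spanned by $\{P_S^\perp x_j : j \in B\}$, Lemma~\ref{lem:op:orth} does collapse the operator identity to the scalar conditions $\ip{x_i, P_S^\perp x_j} = 0$ for $i\in A$, $j\in B$; the normal-equations computation then gives exactly the Schur-complement entries, and assumption~\eqref{asum:PSD:Sigma} guarantees $\Sigma_S\succ 0$ so everything is well-defined. The route differs slightly from the paper's. The paper proves this lemma as a byproduct of Proposition~\ref{prop:proj:Sigma:gen}, passing through the intermediate characterization (b) in terms of SEM coefficients: it first shows $P_A P_S^\perp P_B = 0$ is equivalent to the coefficient identity $[\beta_j(A)]_i = \sum_{k\in S}[\beta_j(S)]_k[\beta_k(A)]_i$ by expanding $P_A x_j$ and $P_A P_S x_j$ in the basis $\{x_k : k\in A\}$, and only then converts that system into the matrix identity (c) by a second round of algebra involving $\Sigma_A^{-1}$. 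Your argument reaches (a) $\iff$ (c) directly and is in fact the natural generalization of the paper's own proof of the pairwise Lemma~\ref{lem:Gram:char:pair}, which computes $\Lc_i^* P_S^\perp \Lc_j = \Sigma_{i,j} - \Sigma_{S,i}^T\Sigma_S^{-1}\Sigma_{S,j}$ via the explicit formula $P_S = \Lc_S\Sigma_S^{-1}\Lc_S^*$. What your approach buys is brevity and a one-stroke treatment of both Gram-matrix lemmas; what the paper's detour buys is the additional equivalence (b), which it needs anyway for the rest of Proposition~\ref{prop:proj:Sigma:gen}. Your handling of the $S=\emptyset$ case and the final transpose to match the stated $(B,A)$ orientation are both appropriate.
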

This lemma is proved as part of Proposition~\ref{prop:proj:Sigma:gen} (Section~\ref{sec:proof:enumerate:POs}). We refer to~\eqref{eq:partial:orth} as \emph{partial orthogonality}, or \emph{conditional orthogonality}, a generalization of the notion of partial uncorrelatedness of random variables.
Clearly the global $\Hil$-Markov property implies the pairwise version as a special case.
The following result, proved in Appendix~\ref{sec:proof:pairwise:global},  establishes the equivalence of the pairwise and global  $\Hil$-Markov properties when the covariance matrix is non-degenerate. Note that this is the $L^2$ analogue of a similar well-known result for CIGs \cite[Thm.~3.9, p.~35]{Lauritzen1996}.

\begin{lem}\label{lem:pairwise:global}
	Assuming $\Sigma \succ 0 $, the pairwise $\Hil$-Markov property implies the global $\Hil$-Markov property. 
\end{lem}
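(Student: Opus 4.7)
}
The plan is to mirror the classical Lauritzen-style proof (Theorem~3.9 of \citet{Lauritzen1996}) that pairwise Markov implies global Markov for positive distributions, but phrased entirely in terms of projection operators. The key algebraic ingredients are the semi-graphoid/graphoid axioms for the ternary relation $(A\perp B\mid S)$ defined by $P_A P_S^\perp P_B = 0$: symmetry, decomposition, weak union, and contraction hold unconditionally, while the \emph{intersection} axiom
\[
(A\perp B\mid S\cup C)\ \text{and}\ (A\perp C\mid S\cup B)\ \Longrightarrow\ (A\perp B\cup C\mid S)
\]
requires $\Sigma\succ 0$ (this is exactly where the assumption enters and plays the role of strict positivity of the density in the classical proof). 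These axioms are either already recorded in Appendix~\ref{app:detials} or follow straightforwardly from Lemma~\ref{lem:Gram:char:gen} combined with standard Schur-complement manipulations on the positive-definite matrix $\Sigma$.

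The first step is a graph-theoretic reduction to the case $A\cup B\cup S = [d]$. Since $S$ separates $A$ from $B$, every connected component of $G\setminus S$ contains vertices from at most one of $A, B$. Absorb each such component into $A$ or $B$ accordingly (components meeting neither $A$ nor $B$ can be assigned arbitrarily) to obtain $\widetilde A\supset A$ and $\widetilde B\supset B$ with $\widetilde A\cup \widetilde B\cup S = [d]$ and $S$ still separating $\widetilde A$ from $\widetilde B$. By decomposition, it suffices to prove $(\widetilde A\perp \widetilde B\mid S)$, so we henceforth assume $A\cup B\cup S = [d]$.

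The second step is a reverse induction on $|S|$ (equivalently, induction on $|A|+|B|$) in this reduced situation. The base case is $|A|=|B|=1$, i.e.\ $A=\{i\}$, $B=\{j\}$, $S=[d]_{ij}$, which is precisely the pairwise $\Hil$-Markov hypothesis~\eqref{eq:pairwise:L2} together with $i\nsim j$ (forced by $S$ separating $A$ from $B$). For the inductive step, assume without loss of generality $|B|\ge 2$, pick $j\in B$, and set $B' = B\setminus\{j\}$. Then $S\cup\{j\}$ separates $A$ from $B'$, and $S\cup B'$ separates $A$ from $\{j\}$; both separating sets are strictly larger than $S$, so by the inductive hypothesis
\[
(A\perp B'\mid S\cup\{j\})\quad\text{and}\quad (A\perp \{j\}\mid S\cup B').
\]
The intersection property then yields $(A\perp B'\cup\{j\}\mid S) = (A\perp B\mid S)$, which closes the induction.

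The main obstacle, and the only place the hypothesis $\Sigma\succ 0$ is actually used, is the intersection axiom for partial orthogonality. I expect to verify it either by invoking the Gram-matrix characterization of Lemma~\ref{lem:Gram:char:gen} (reducing the statement to a Schur-complement identity whose validity relies on the invertibility of the relevant principal submatrices of $\Sigma$), or directly at the operator level by observing that $P_S^\perp P_{SC}^\perp = P_{SC}^\perp$ and that $P_{SC}^\perp = P_S^\perp - P_S^\perp Q P_S^\perp$ for the projection $Q$ onto $\ran(P_S^\perp P_C)$, which is well defined and behaves as expected precisely because $\Sigma\succ 0$ ensures that no nontrivial linear relations collapse the relevant subspaces. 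Everything else in the argument is purely combinatorial or uses only the semi-graphoid axioms, which follow at once from the self-adjointness and lattice properties of the $P_S$ recorded in Section~\ref{sec:proj:lattice}.
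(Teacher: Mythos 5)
Your plan is correct, but it takes a genuinely different route from the paper. The paper's proof transfers the whole problem to a Gaussian model: it builds an isometry $V$ between $\Span\{x_1,\dots,x_d\}$ and the span of a Gaussian vector $y\sim N(0,\Sigma)$, shows that partial orthogonality is preserved under this isometry ($P_AP_S^\perp P_B=0 \iff Q_AQ_S^\perp Q_B=0$ for the corresponding projections $Q$), and then invokes the classical pairwise-implies-global theorem for conditional independence graphs of positive densities \cite[Thm.~3.9]{Lauritzen1996}, translating back via the equivalence of orthogonality and independence for Gaussians. You instead re-prove that classical theorem directly at the operator/Gram-matrix level: semi-graphoid axioms plus the intersection axiom, the component-absorption reduction to $A\cup B\cup S=[d]$, and the downward induction on $|S|$. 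Your route is more self-contained and more in the operator-theoretic spirit of the paper; the paper's route is shorter because it outsources the combinatorial induction to the classical result. Two caveats. First, the graphoid axioms are \emph{not} recorded in Appendix~\ref{app:detials}, so the intersection axiom --- the only place $\Sigma\succ 0$ enters --- must actually be proved, and it is the crux of your argument. The Gram-matrix route does work: writing $\Sigma_{X,Y\mid Z}:=\Sigma_{X,Y}-\Sigma_{X,Z}\Sigma_Z^{-1}\Sigma_{Z,Y}$ and using the update identity $\Sigma_{B,A\mid S}=\Sigma_{B,A\mid SC}+\Sigma_{B,C\mid S}\Sigma_{C\mid S}^{-1}\Sigma_{C,A\mid S}$, the two hypotheses give $\Sigma_{B,A\mid S}=M\,\Sigma_{B,A\mid S}$ with $M=\Sigma_{B,C\mid S}\Sigma_{C\mid S}^{-1}\Sigma_{C,B\mid S}\Sigma_{B\mid S}^{-1}$; positive definiteness of $\Sigma_{BC\mid S}$ forces the spectral radius of $M$ to be strictly less than one, hence $\Sigma_{B,A\mid S}=0$, and symmetrically $\Sigma_{C,A\mid S}=0$. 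Your sketched operator-level alternative for intersection is vaguer and I would not rely on it as stated. Second, if you were to prove intersection by realizing $\Sigma$ as a Gaussian covariance and citing the probabilistic intersection property, you would essentially have reconstructed the paper's transfer argument --- so the Schur-complement verification is what makes your proof genuinely independent.
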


The PCG, as well as the pairwise and global $\Hil$-Markov properties, can be thought of as being defined based on (1) a vector $x \in \Hil^d$ with a particular Gram matrix $\Sigma$, or more abstractly based on (2) a family of projection operators on the subspaces generated by $x$. We have implicitly taken the first viewpoint, in which case we can characterize the PCG in terms of $\Sigma$. There are however subtle differences between (1) and (2). A vector $x$ uniquely identifies both a Gram matrix $\Sigma$ and a  collection of subspaces, or equivalently a collection of projection operators $\{P_A\}$. However, neither $\Sigma$ nor $\{P_A\}$ uniquely identifies the other. Nevertheless both are useful in encoding $\Hil$-Markov properties as illustrated in Lemma~\ref{lem:Gram:char:pair}.
For example, it is clear from the projection viewpoint, that rescaling any of the components of $x$ does not change the subspaces and hence the $\Hil$-Markov properties. This implies the following:
\begin{lem}\label{lem:diag:rescaling}
	$D \Sigma D$ has the same $\Hil$-Markov properties as $\Sigma$ for any diagonal matrix $D$, with nonzero diagonal entries.
\end{lem}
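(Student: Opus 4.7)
}

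The plan is to realize both Gram matrices by a common choice of underlying vector in $\Hil^d$, differing only by a componentwise rescaling, and then argue that the projection operators (and hence all partial orthogonality relations) are identical. Concretely, fix $x=(x_1,\dots,x_d)\in\Hil^d$ whose Gram matrix is $\Sigma$, and for a nonsingular diagonal $D=\diag(d_1,\dots,d_d)$ with $d_i\neq 0$, define a new vector $y=(y_1,\dots,y_d)$ by $y_i:=d_i\,x_i$. A direct inner-product calculation gives
\begin{align*}
(\ip{y_i,y_j}_\Hil)_{i,j} \;=\; (d_i d_j \ip{x_i,x_j}_\Hil)_{i,j} \;=\; D\Sigma D,
\end{align*}
so $y$ is a concrete realization of a vector in $\Hil^d$ whose Gram matrix is $D\Sigma D$.

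Next I would observe that for every $S\subset[d]$, the set $\{y_i:i\in S\}=\{d_i x_i:i\in S\}$ has the same linear span as $\{x_i:i\in S\}$, because each $y_i$ is a nonzero scalar multiple of $x_i$. Denoting the projection onto $\Span\{y_i:i\in S\}$ by $P_S^{(y)}$ and the corresponding projection for $x$ by $P_S^{(x)}$, we therefore have $P_S^{(y)}=P_S^{(x)}$ for every $S\subset[d]$. In particular, $P_S^{(y)\perp}=P_S^{(x)\perp}$ as well.

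Finally, since Definition~\ref{defn:PO}, Definition~\ref{def:pairwise:L2:abs:PCG}, and Definition~\ref{defn:global:Markov:L2} are all phrased entirely in terms of relations of the form $P_A P_S^\perp P_B=0$ built from these projections, replacing $x$ by $y$ (equivalently, $\Sigma$ by $D\Sigma D$) leaves every such relation unchanged. Hence the PCG and the pairwise and global $\Hil$-Markov properties of $D\Sigma D$ coincide with those of $\Sigma$.

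There is no real obstacle here beyond unwinding the definitions; the only thing to be careful about is the point flagged just before the lemma, namely that $\Sigma$ alone does not uniquely determine the projection operators. This is handled precisely by exhibiting the common realization $y_i=d_i x_i$, so that both Gram matrices $\Sigma$ and $D\Sigma D$ are associated with the \emph{same} family $\{P_S\}$ of projections.
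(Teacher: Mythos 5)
Your argument is correct and is essentially the paper's own: the text preceding the lemma proves it by exactly this observation, namely that rescaling the components of $x$ leaves every subspace $\Span\{x_i : i \in S\}$, hence every projection $P_S$, and hence every relation $P_A P_S^\perp P_B = 0$ unchanged. Your write-up simply makes explicit the realization $y_i = d_i x_i$ of the Gram matrix $D\Sigma D$, which is a reasonable way to formalize the paper's remark.
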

Although this can be verified directly in terms of $\Sigma$, using the characterization of Lemma~\ref{lem:Gram:char:gen}, the projection viewpoint makes this immediately clear. A major theme of this paper is showing the usefulness of the projection interpretation in understanding the algebraic features of the $\Hil$-Markov property.

\subsubsection{PCG Examples}\label{sec:pcg:examples}
Let us now give more concrete examples of the abstract PCG of Definition~\ref{def:pairwise:L2:abs:PCG}, which go beyond the familiar case of Example~\ref{exa:usual:rvs}, and also show the utility of Lemma~\ref{lem:Gram:char:pair}.

\begin{figure}[t]
	\def\pcgscale{.16}
	\begin{minipage}{.8\textwidth}
		\includegraphics[width=\pcgscale\linewidth]{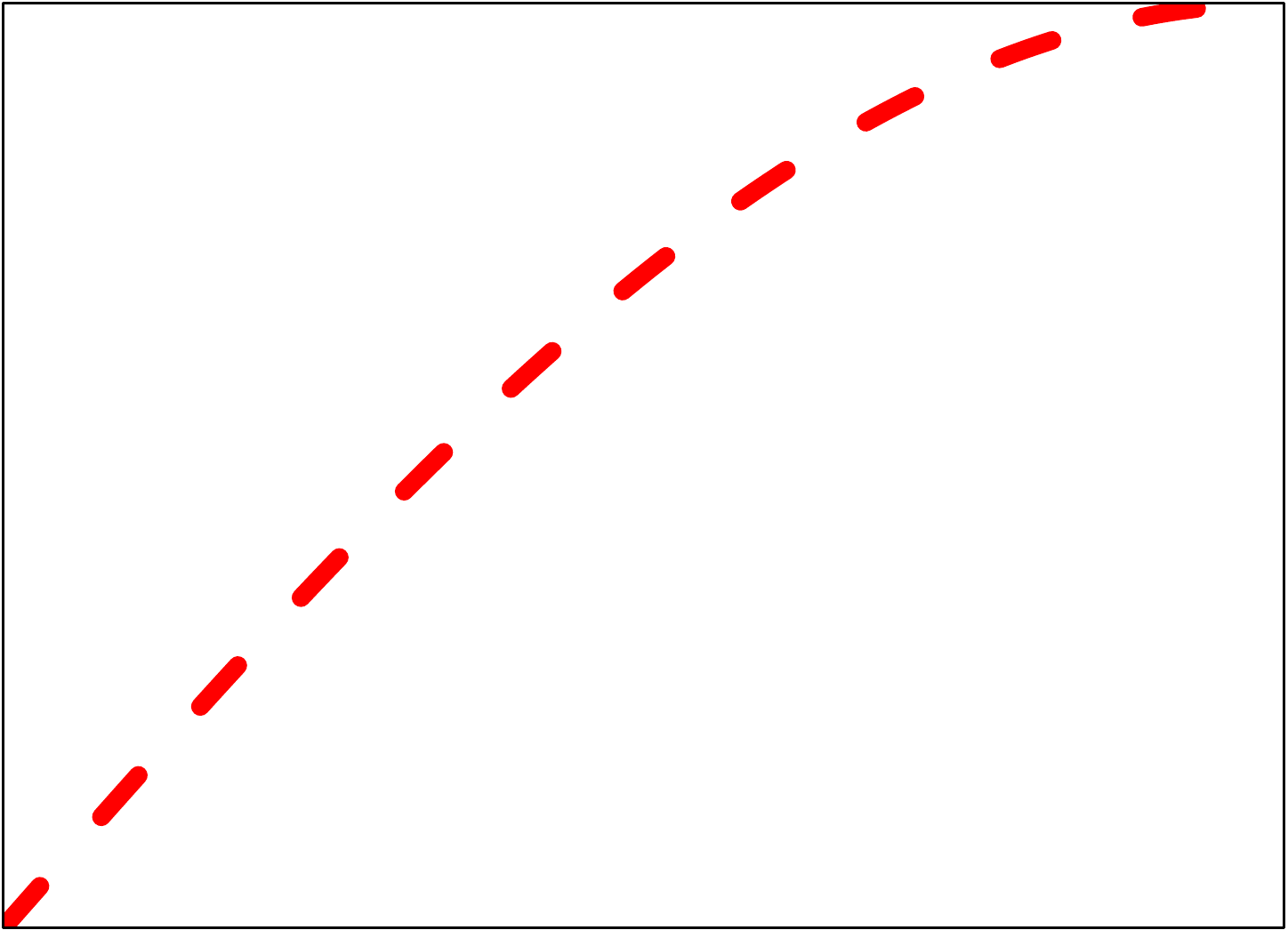}
		\includegraphics[width=\pcgscale\linewidth]{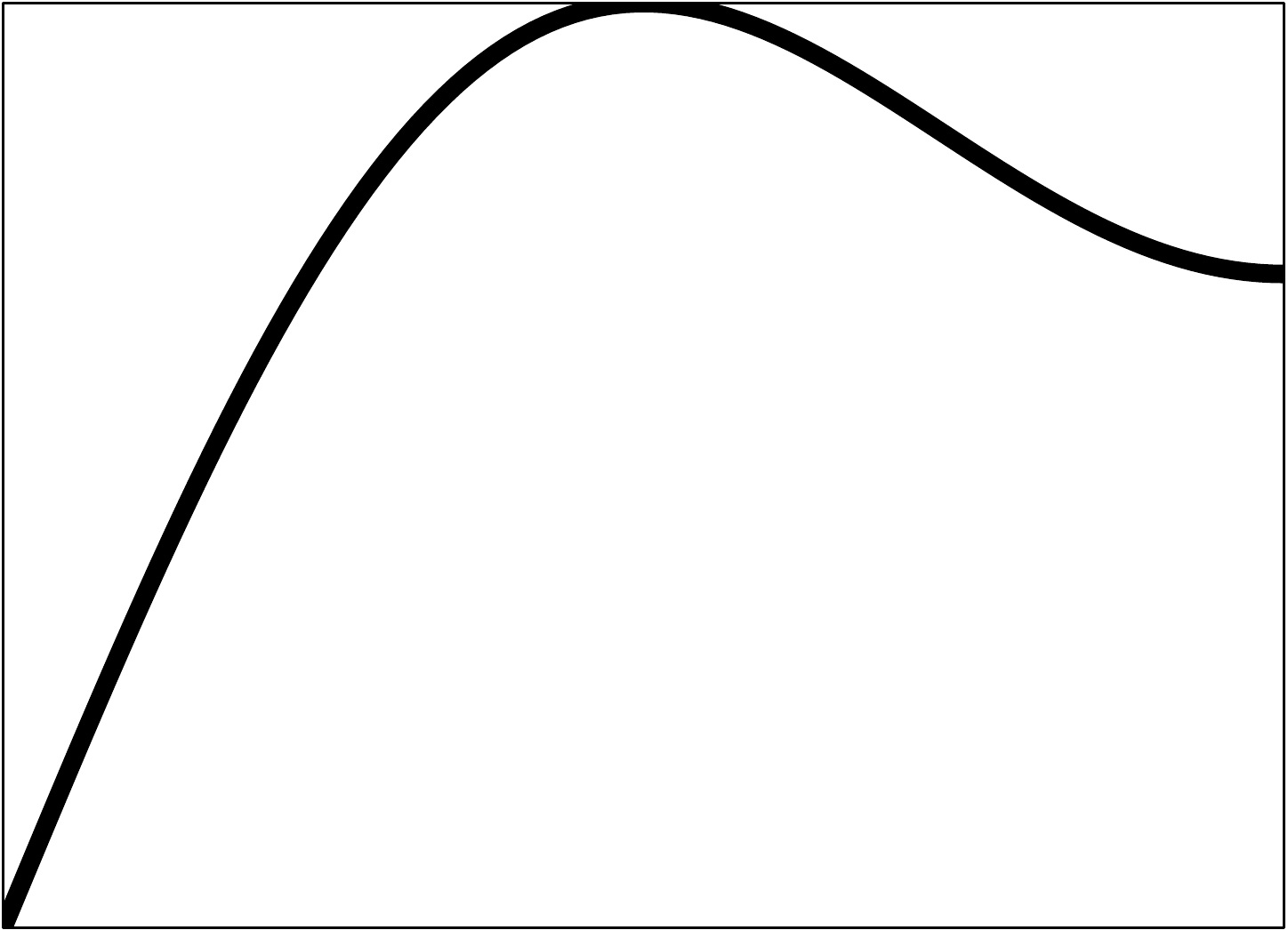}
		\includegraphics[width=\pcgscale\linewidth]{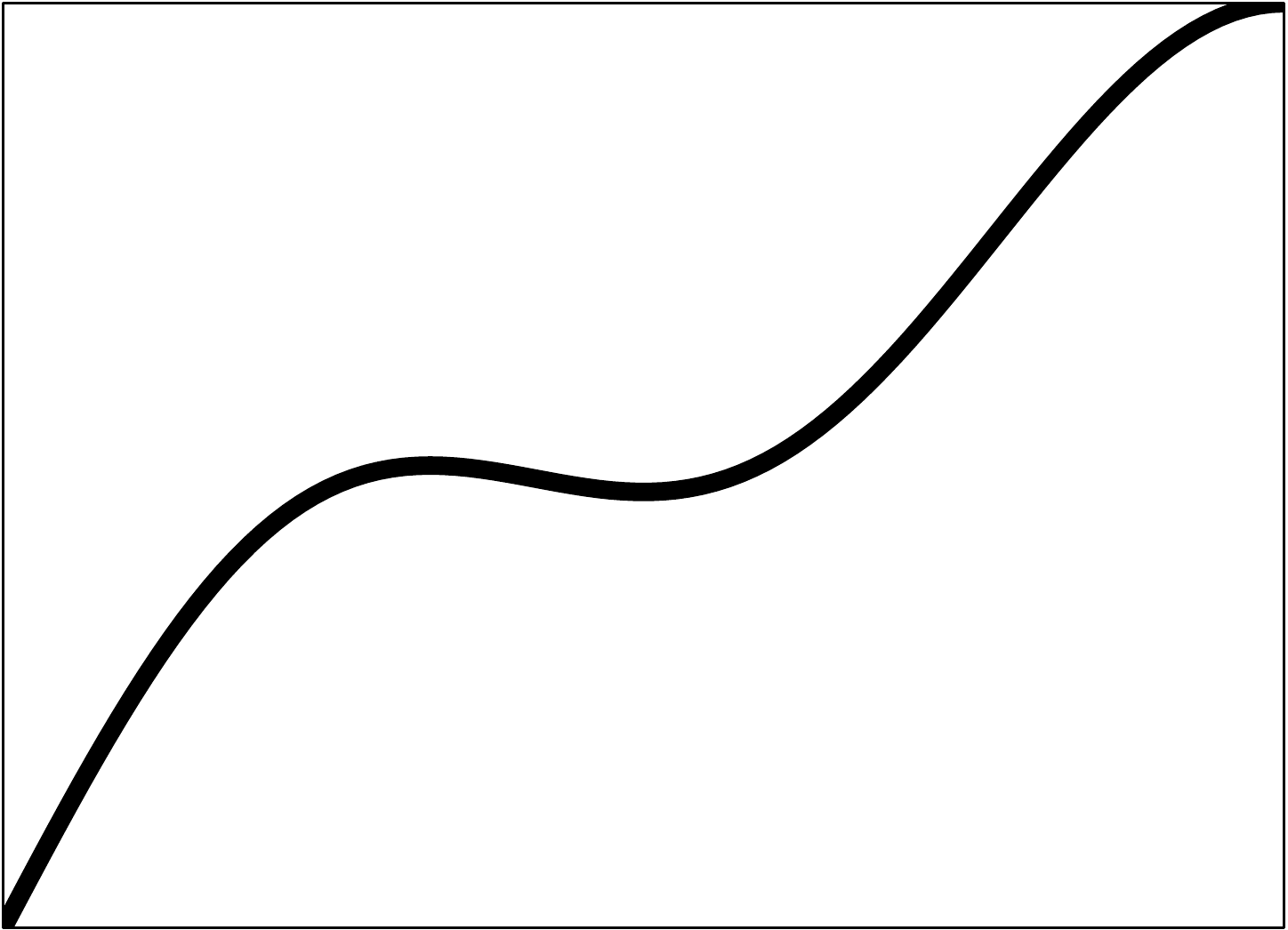}
		\includegraphics[width=\pcgscale\linewidth]{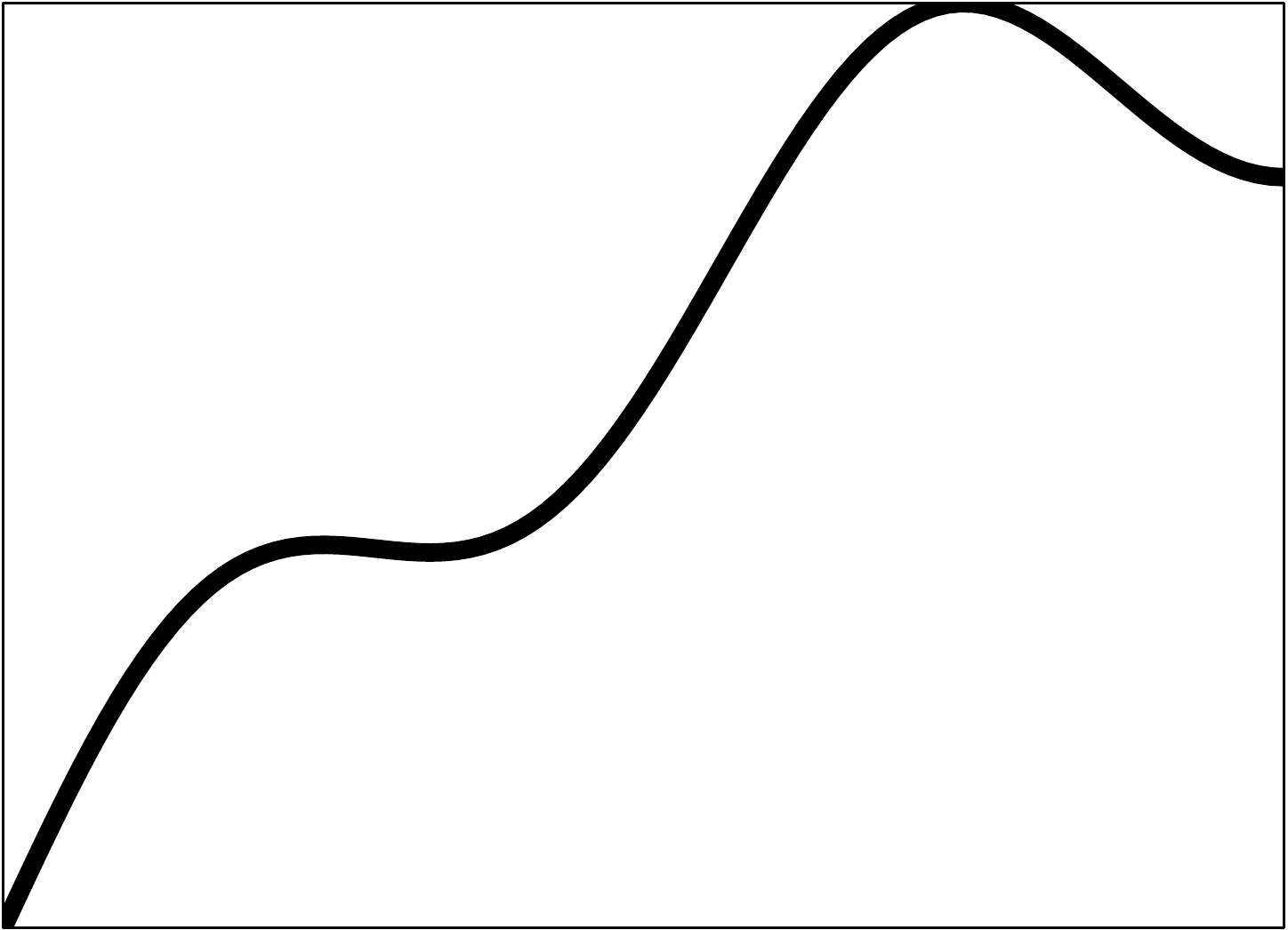}
		\includegraphics[width=\pcgscale\linewidth]{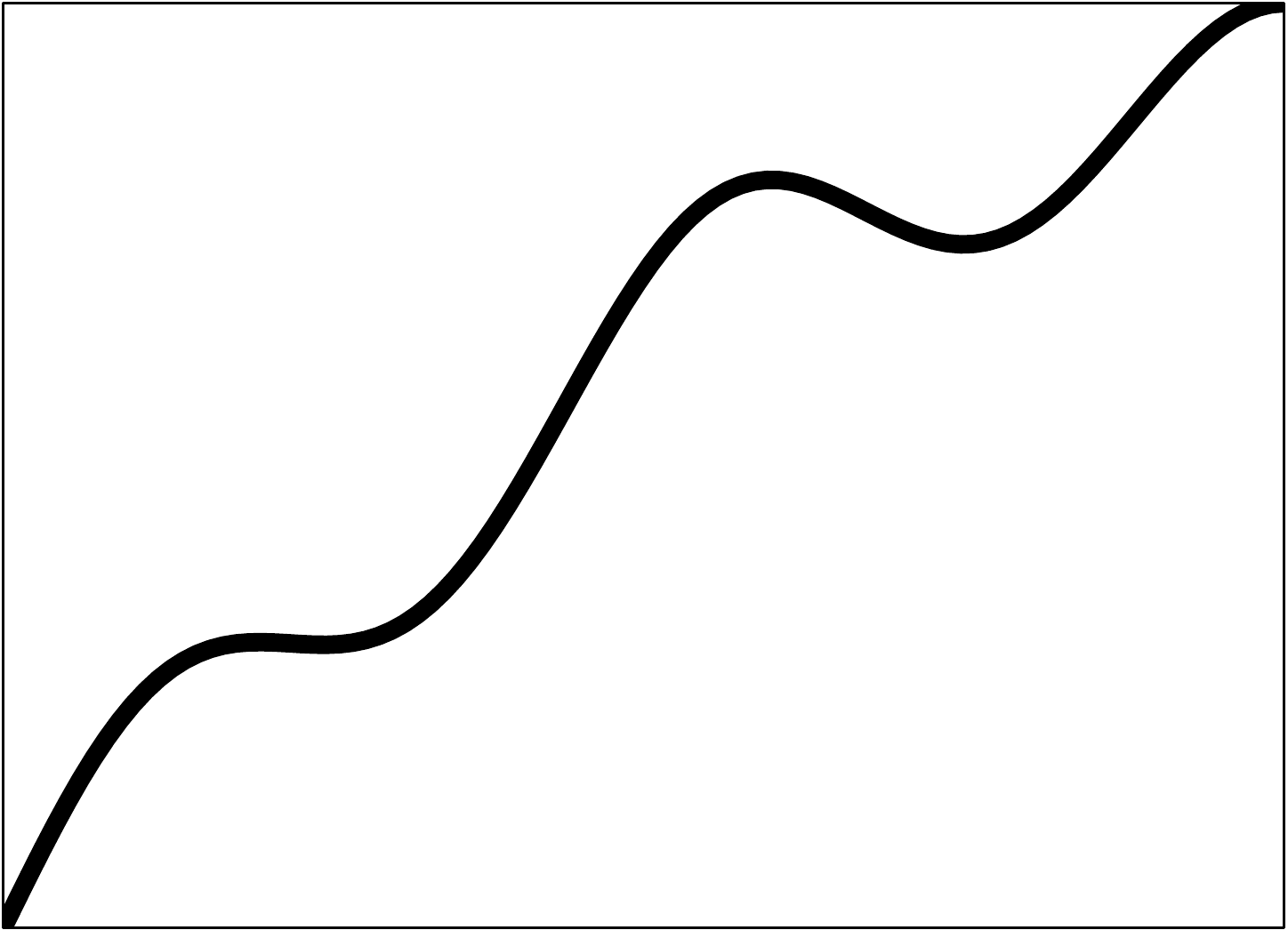}
		\includegraphics[width=\pcgscale\linewidth]{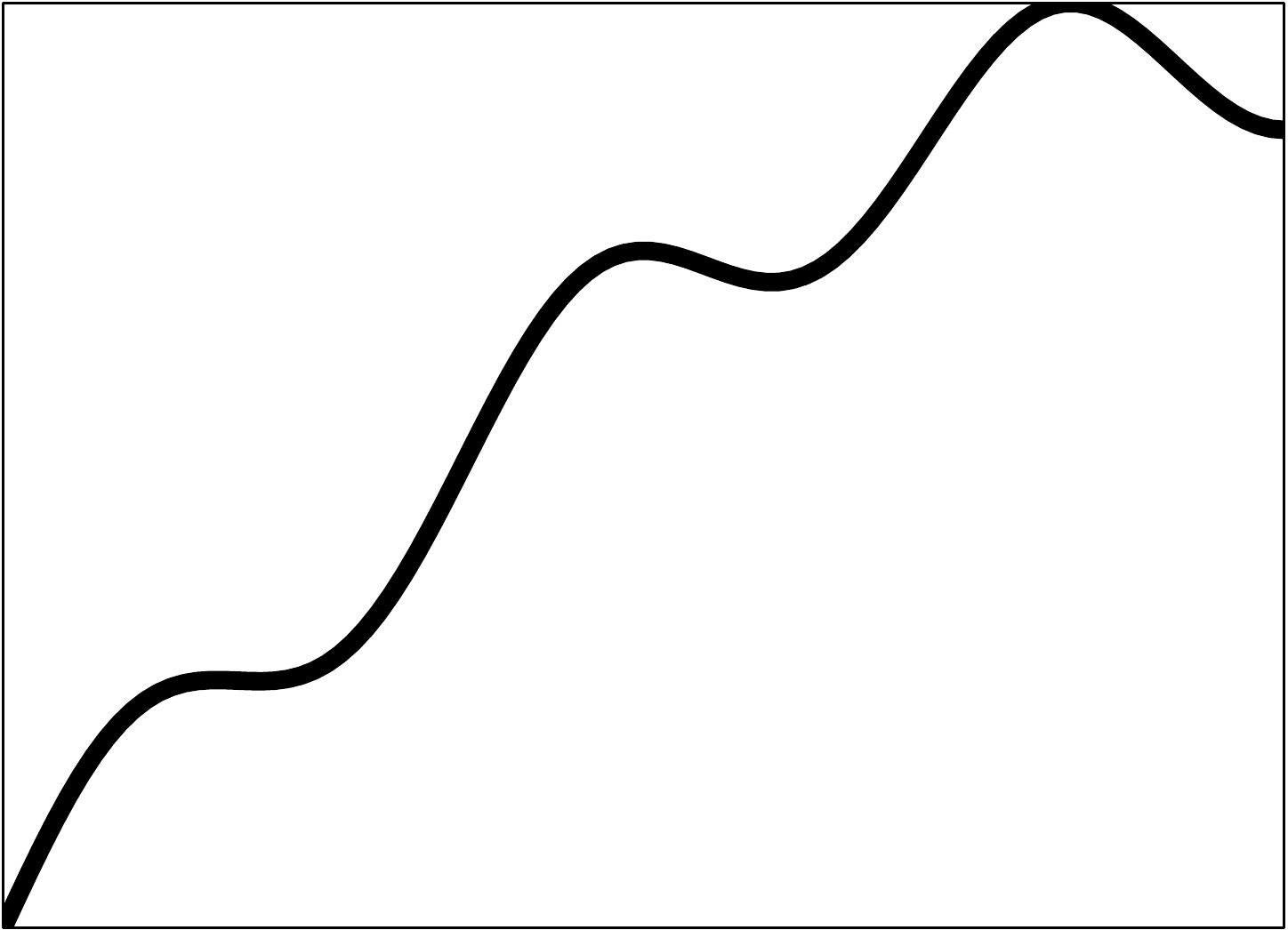}\\[1ex]
		\includegraphics[width=\pcgscale\linewidth]{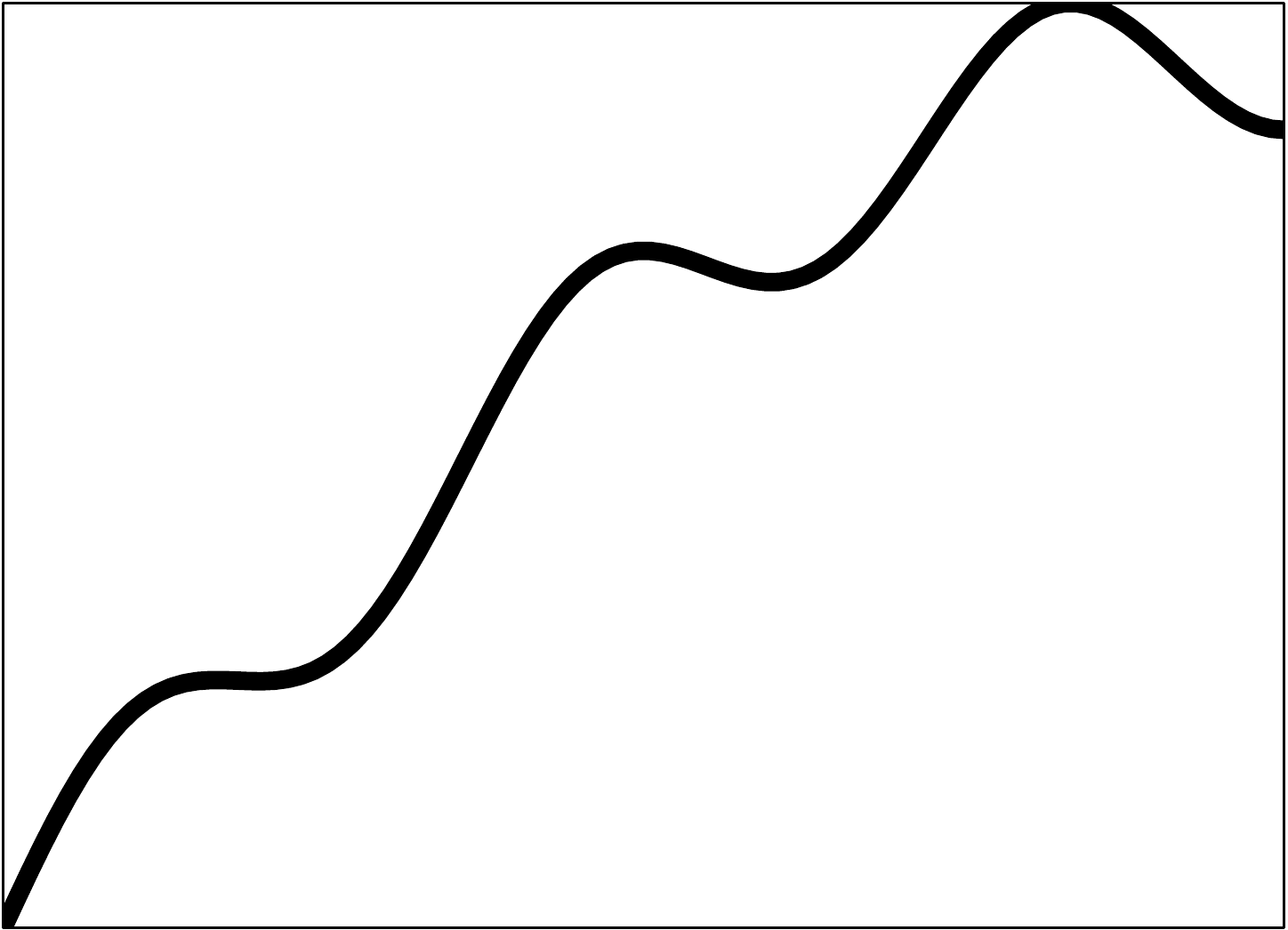}
		\includegraphics[width=\pcgscale\linewidth]{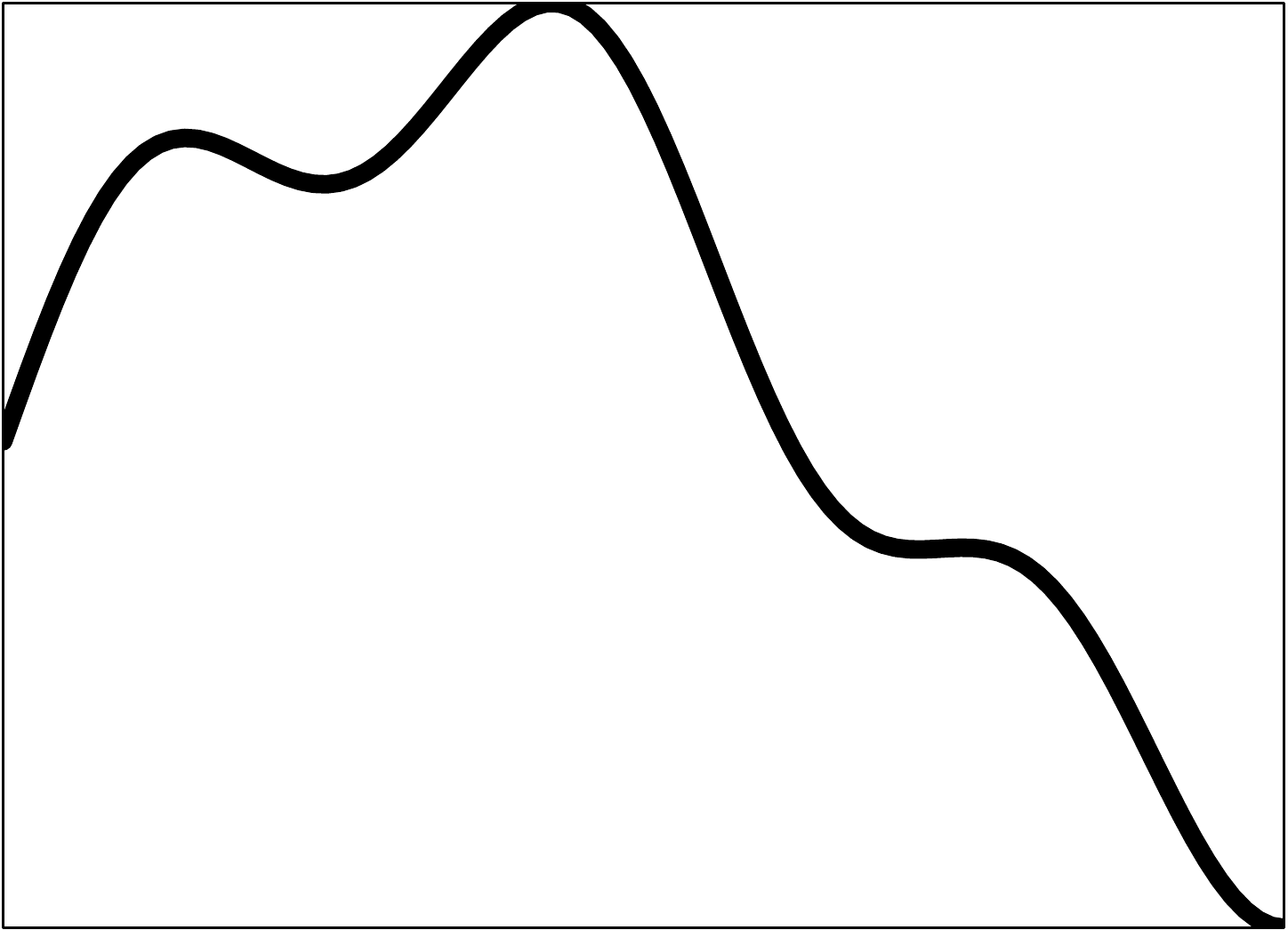}
		\includegraphics[width=\pcgscale\linewidth]{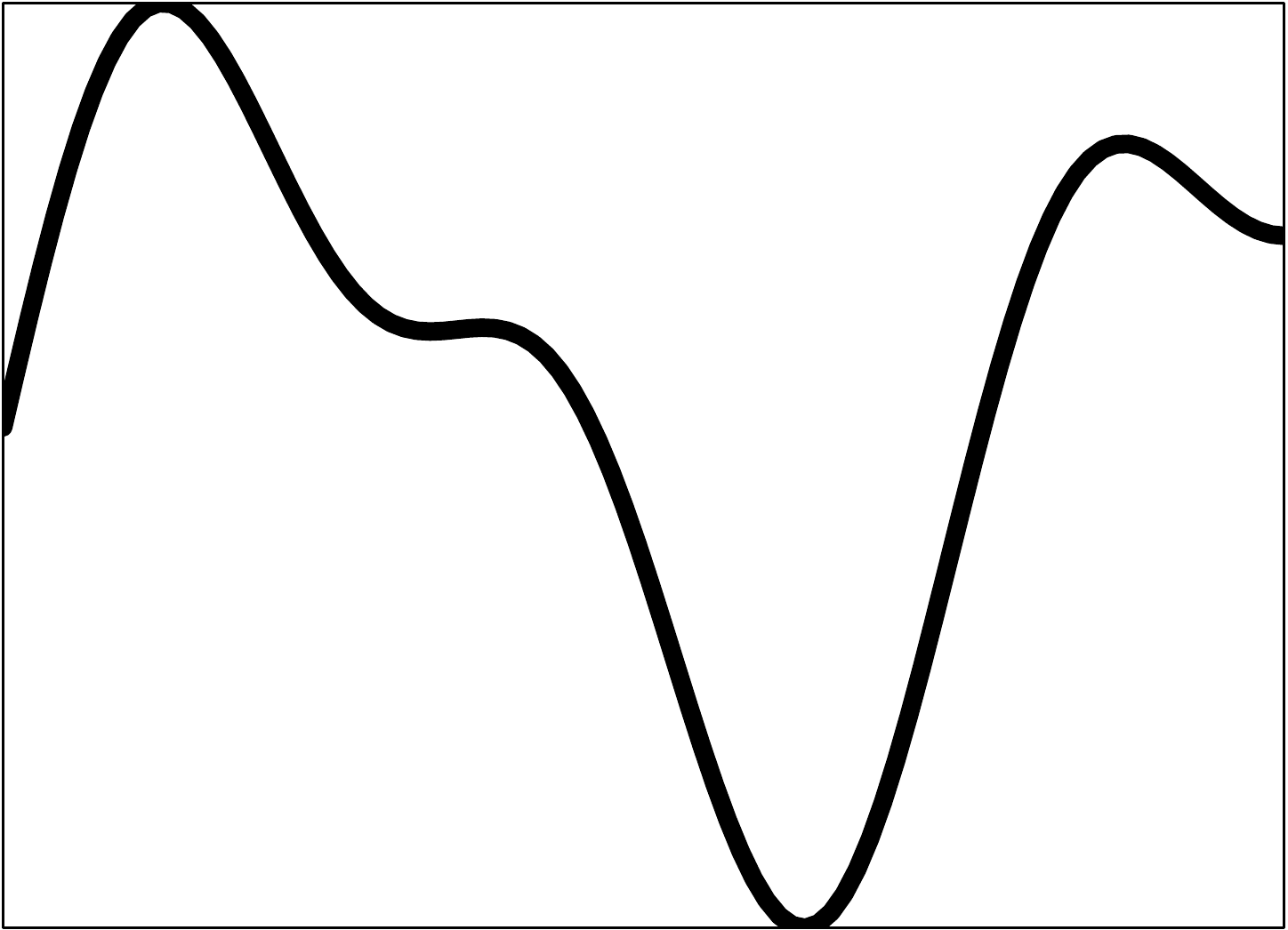}
		\includegraphics[width=\pcgscale\linewidth]{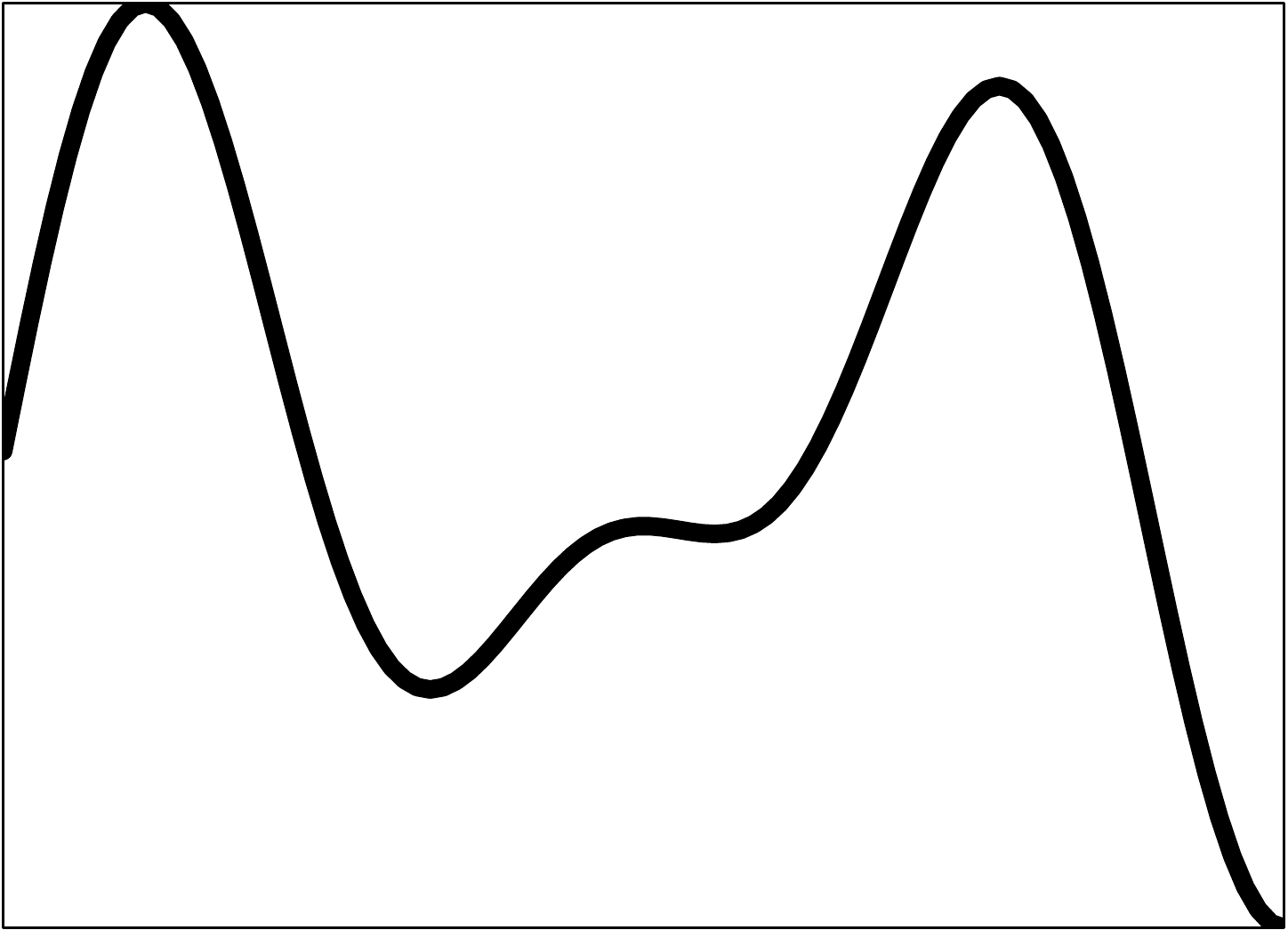}
		\includegraphics[width=\pcgscale\linewidth]{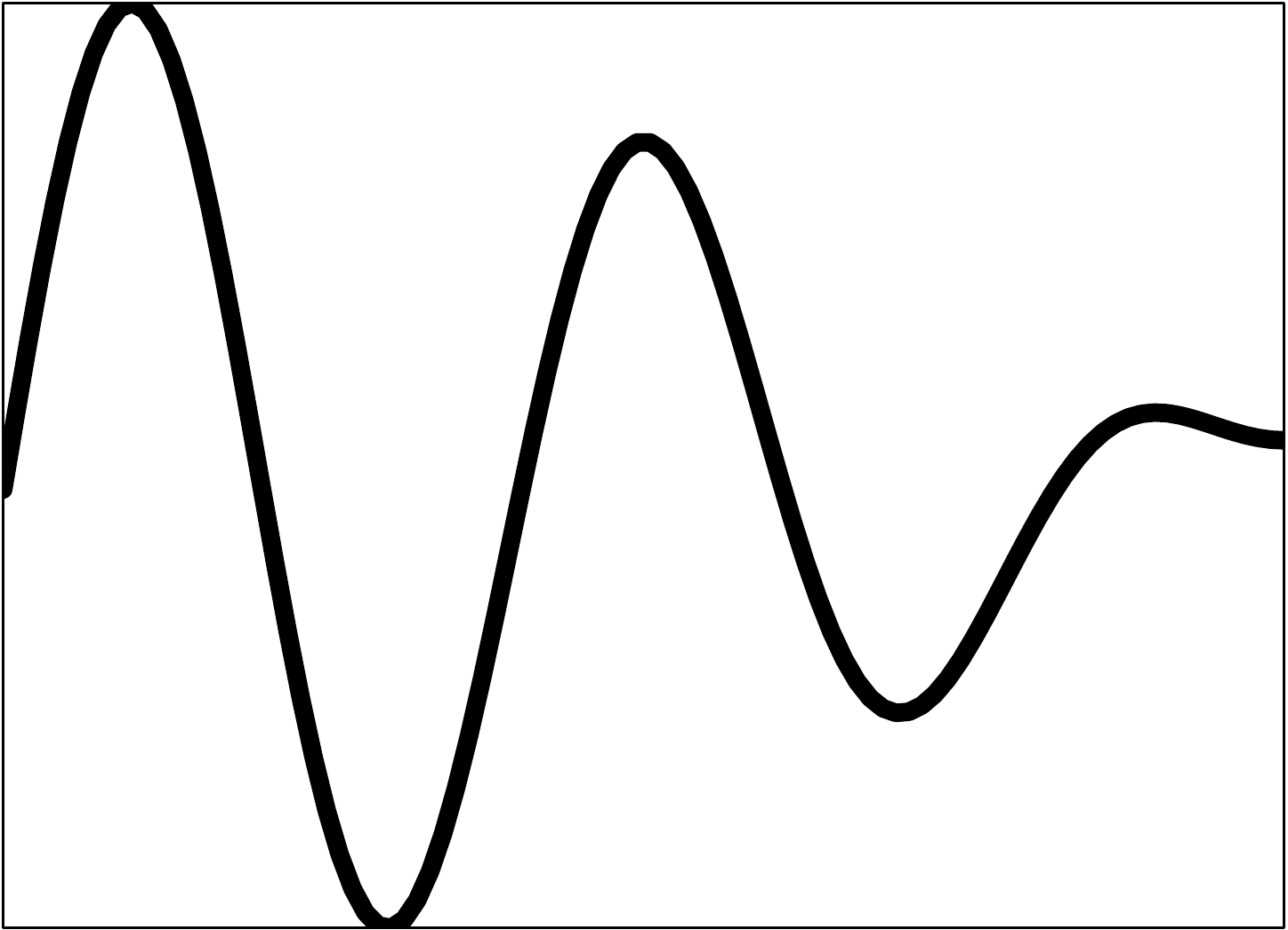}
		\includegraphics[width=\pcgscale\linewidth]{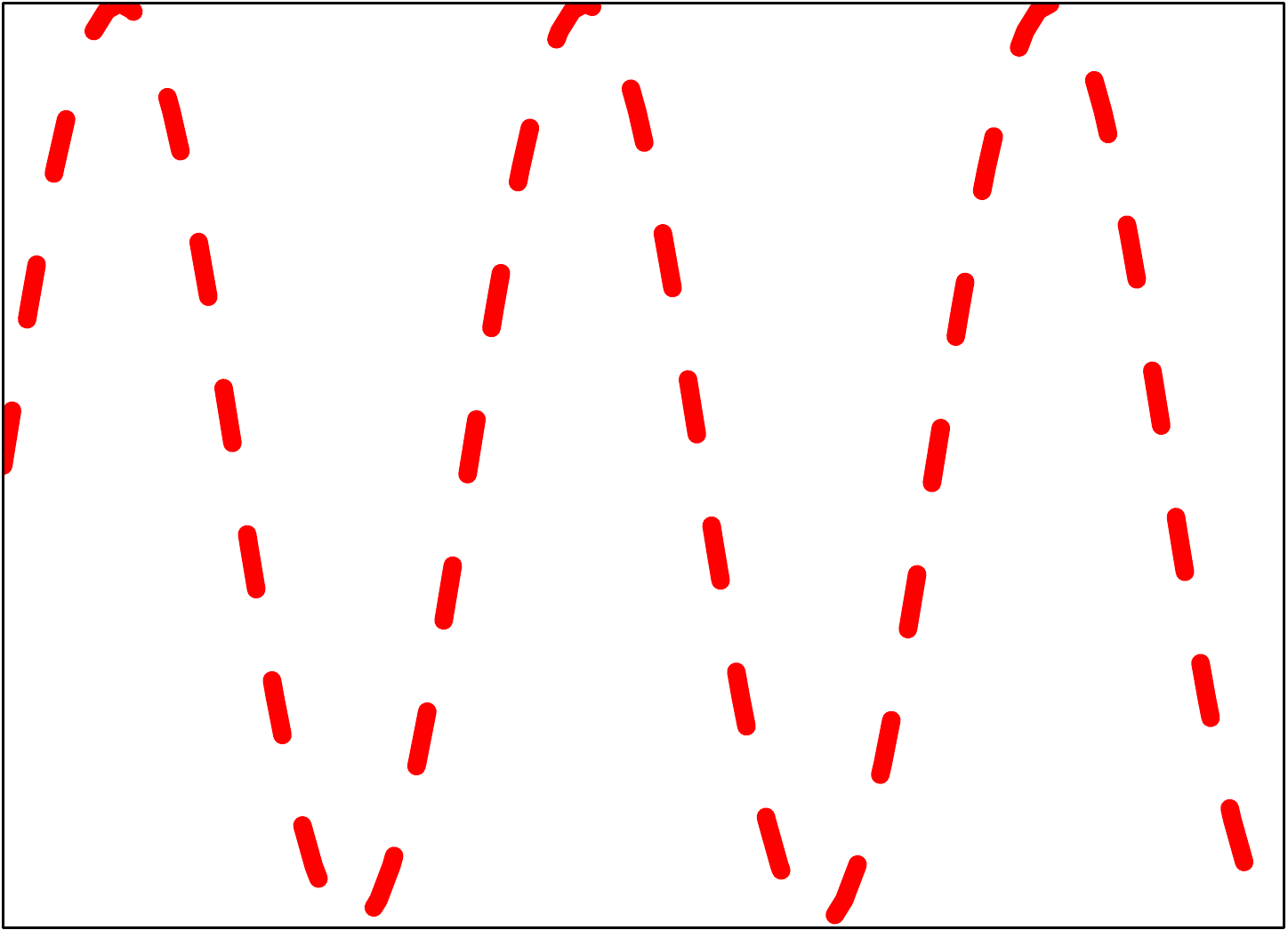}
	\end{minipage}\hfill
	\begin{minipage}{.18\textwidth}
		\centering
		\begin{tikzpicture}[scale=0.6]
		\label{star_poa}
		\def \n {20}
		\def \N {8}
		\def \radius {1.5cm}
		\def \rd {1mm}
		\def \rer {4mm}
		
		\def \margin {8} 
		
		\node[draw, circle, thick,red, dotted] at (360:0mm) (ustar) {};
		\foreach \i [count=\ni from 0] in {0,1,2,3,4,5}{
			\node[draw, circle] at ({\ni*60}:\radius) (u\ni) {};
			\draw (ustar)--(u\ni);
		}
		
		\end{tikzpicture}   
	\end{minipage}
	\caption{Examples of functions in $H^1([0,1])$ that form a 6-nodes star-shaped PCG. Each row corresponds to one star-shaped graph, and in each case the function that acts as the central node is shown with dashed red line. See Example~\ref{exa:deterministic} for the details of the constructions.}
	\label{fig:sobolev:star}
\end{figure}

\begin{exa}[PCG for deterministic objects]\label{exa:deterministic}
	The general setup of Definition~\ref{defn:pcg} allows us to define PCGs for purely deterministic objects. For example, let us take $\Hil = \reals^n$ with the usual Euclidean inner product and take $\{\gv_j, j \in [d]\}$ to be a collection of vectors in $\reals^n$. The resulting PCG encodes a form of partial orthogonality among these vectors. For example, let $\{e_1,\dots,e_n\}$ be an orthonormal basis of $\reals^n$ and let $\gv_j = e_1 + e_j$ for $j \in [d]$, assuming $n \ge d$. It is clear that for $j\neq 1$ and any subset $A \subset [d]_j$ containing $1$, $P_{A} x_j = P_1 \gv_j$  and $P_1^\perp \gv_j = e_j$. Hence, for distinct $i,j \neq 1$ and $1 \in A \subset [d]_{ij}$, we have $\gv_i^T P_{A}^\perp \gv_j = \gv_i^T P_1^\perp \gv_j = 0$. In other words, ``conditional'' on any subset $[d]_{ij}$ containing $1$, $i$ and $j$ are orthogonal, i.e., there is no edge  between them in the PCG. One can argue with some more work that all the other edges (i.e., between $1$ and any other node) are present. Hence the PCG is star-shaped with node 1 at the center. The corresponding Gram matrix and its inverse are
	\begin{align}\label{eq:Gram:star}
	\Sigma = 
	\frac1{\gamma - \norm{u}^2} 
	\begin{pmatrix}
	1 & u \\
	u & (\gamma - \norm{u}^2)I +uu^T
	\end{pmatrix}, \quad 
	\Sigma^{-1} = 
	\begin{pmatrix}
	\gamma & -u^T \\
	-u & I
	\end{pmatrix},
	\end{align}
	where $\gamma = d/4$ and $u = \frac12 1_{d-1}$. Here, $1_{d-1}$ is the all ones vector in $\reals^{d-1}$, and we have $\gamma - \norm{u}^2 = 1/4$. The star shape of the PCG is immediately clear from~\eqref{eq:Gram:star} in view of~\eqref{eq:inv:Gram}, or equivalently Lemma~\ref{lem:Gram:char:pair}.
	
	Similarly, one can take $\Hil = \Fc$ for a Hilbert space of functions and let $\{\gv_j\}$ be functions in $\Fc$. For example, take $\Hil = H^1([0,1])$, the Sobolev space of order $1$, defined as the space of absolutely continuous functions $f :[0,1]  \to \reals$ with derivative $f' \in L^2([0,1])$, and $f(0) = 0$. The inner product is $\ip{f,g}_\Hil := \ip{f',g'}_{L^2([0,1])}=\int_0^1 f'(t) g'(t)dt$, and the corresponding norm is a measure of smoothness of the functions. An orthonormal basis for this space is $e_k(t) = \mu_k\sin (t/\mu_k)$ for $k = 1,2,\dots$, where $\mu_k = 2/((2k-1)\pi)$. The exact same construction used earlier, namely $x_j(t) = e_1(t) + e_j(t), \; t \in [0,1],\; j\in [d]$, can be used to obtain a star PCG for the underlying functions, since one gets the exact same Gram matrix as in~\eqref{eq:Gram:star}. In this case, in contrast to the finite-dimensional setting, we can even take $d = \infty$ to have a star PCG with infinitely many leaves. The top row in Figure~\ref{fig:sobolev:star} shows the functions in this construction for $d = 6$. Also shown as the second row is the collection $x_j (t) = e_6(t) + e_j(t), j=1,\dots,6$ which has the same star-shaped PCG with $e_6$ as the central node.\qed
\end{exa}

\begin{exa}[PCG for random vectors]\label{exa:rand:vec}	For a probability measure $\pr$, and $n \in \nats$, let $\Hil = (L^2(\pr))^n$, the $L^2$ space of random vectors $\rv \in \reals^n$ with bounded second moments: $\ex \norm{\rv}_2^2 < \infty$. The inner product here is given by $\ip{X,Y}_\Hil = \ex \ip{X,Y}_{\reals^n} = \sum_{i=1}^n \ex (X_i Y_i)$ where $\ip{\cdot,\cdot}_{\reals^n}$ is the usual Euclidean inner product in $\reals^n$. Fix integers $d$ and $m$, and take a  sequence $\{u_{kj},\; k \in [m], \; j\in [d] \} \subset \reals^n$ of deterministic vectors, a sequence $\{Z^{kj},\; k \in [m], \; j\in [d] \} \subset L^2(\pr)$ of zero-mean (scalar) random variables, and let 
	$X_j = \sum_{k=1}^m Z^{kj} u_{kj} \in \reals^n$ for $ j = 1,\dots,d$.
	Clearly each $X_j \in  (L^2(\pr))^n$ and we have 
	\begin{align*}
	\ip{X_i,X_j}_\Hil = \sum_{k=1}^m \sum_{k'=1}^m \ex[Z^{ki} Z^{k'j}] \ip{u_{ki},u_{k'j}}_{\reals^n}, \quad i,j \in [d]
	\end{align*}
	which can model complex correlations both via the covariance matrices $S_{k,k'} = (\ex[Z^{ki} Z^{k'j}]) \in \reals^{d \times d}$ and the relative positions of $\{u_{kj}\}$ in $\reals^n$. Letting $V_j = \Span\{u_{1j},\dots,u_{mj}\}$, we observe that $X_j \in V_j$, that is $X_j$ is a random element of $V_j$. Thus, we can use this setup to simultaneously model subspace clustering and dimension reduction, especially when assuming $m \ll n$ so that $V_j$s are of much lower dimension than the ambient space, and when some pairs of subspaces are either identical or have large intersections. For example, to see the application to clustering, consider the case where $V_j = V^{(1)}$ for some $j$s and $V_j = V^{(2)}$ for others, with $V^{(1)} \neq V^{(2)}$. Then, the vectors $\{X_j\}$ are naturally divided into two clusters, and their PCG will contain information about the two clusters.
	
	The PCG of Definition~\ref{defn:pcg} for $X = (X_1,\dots,X_d)$ has a random vector, $X_j$, on each node.   The geometry of the subspaces $V_1,\dots,V_d$ affects this PCG of $X$.  For example, when the subspaces are mostly orthogonal, there will be a lot of missing edges in the PCG: If $V_i \perp V_j$ for $i \in A$ and $j \in B$, then,  $\ip{X_i,X_j}_\Hil = 0$ for all $i\in A,\; j \in B$, irrespective of the correlation structure of $Z$. Lemma~\ref{lem:Gram:char:pair} (with $S=[d]_{ij}$), then, implies that there is no edge between $A$ and $B$ in the PCG (since the inverse Gram matrix has a zero block on indices $A\times B$).\qed

\end{exa}

\begin{exa}[PCG for random functions]\label{exa:random:func}
	In the setup of Example~\ref{exa:rand:vec}, we can replace $\reals^n$ with a Hilbert space $\Fc$  of real-valued functions on domain $\Xc$, that is, $\reals^{\Xc}$. We take $\Hil = (L^2(\pr))^{\Xc}$, a space of random functions (or random processes) on domain $\Xc$, with $\ip{F,G}_\Hil = \ex \ip{F,G}_{\Fc}$.  Similar to Example~\ref{exa:rand:vec}, take a deterministic sequence  $\{f_{kj} \in \Fc,\; k\in [m], j\in [d]\}$, (with $m = \infty$ a possibility when $\Fc$ is an infinite-dimensional space) and let $F_j = \sum_{k=1}^m Z^{kj} f_{kj}$. All the discussions of the Example~\ref{exa:rand:vec} go through. The PCG in this case has random functions $\{F_j\}$ as its nodes. Depending on what the inner product of $\Fc$ measures, the absence of an edge could have different meanings. For example, if $\Fc$ is a Sobolev space where the norm measures some form of smoothness of the function, an absence of an edge between $F_i$ and $F_j$ includes some information about the relative smoothness of the pair of functions. 
	
	For example, let $\Fc$ be the Sobolev space of Example~\ref{exa:deterministic} with basis functions $e_1,e_2,\dots$ given there. Consider the star construction with $e_1$ at the center but with random weights: $x_j = Z^{1j} e_1 + Z^{2j} e_j$ for $j\in [d]$. Let $S_{k,k'} = (\ex[Z^{ki} Z^{k'j}]) \in \reals^{d \times d}$ for $k,k' = 1,2$. Note that $S_{2,1} = S_{1,2}^T$. Let $\delta_{i,j} = 1\{i\neq j\}$ be the Kronecker delta function. It is not hard to see that 
	\begin{align*}
	\Sigma_{i,j} := \ip{x_i,x_j} = \big[ S_{1,1} + S_{1,2}^T \delta_{i,1} + S_{1,2}\delta_{1,j} + S_{2,2}\delta_{i,j} \big]_{i,j}, \quad i,j\in[d].
	\end{align*}
	Thus, the off-diagonal elements of the first row of $\Sigma$ ($\Sigma_{1,i},i\neq1$) are determined by the corresponding elements of $S_{1,1} + S_{1,2}^T$, while the off-diagonal elements of $\Sigma$ outside the first row and column are determined by those of $S_{1,1}$ alone. It is clear for example, that elements of $S_{2,2}$ besides $[S_{2,2}]_{1,1}$ have no effect on the Gram matrix. The deterministic case of Example~\ref{exa:deterministic} corresponds to $S_{1,1} = S_{1,2} = $ all-ones $d \times d$ matrix.  The other extreme $S_{1,1} = S_{1,2} = I_d$ leads to the empty PCG. Many other possibilities exist between these two extremes.
	\qed
\end{exa}

We note that Examples~\ref{exa:rand:vec} and~\ref{exa:random:func} provide useful statistical frameworks for modeling multivariate dependencies among random functions or other high-dimensional objects, while retaining some familiar aspects of regression analysis and PCGs. In this sense, they provide avenues for extending classical multivariate statistical analysis to collections of higher-order objects (vectors, matrices, functions, etc.).
\section{Proofs of auxiliary results}

We recall the following notational conventions: For a matrix $\Sigma \in \reals^{d \times d}$, and subsets $A,B \subset [d]$, we use $\Sigma_{A,B}$ for the submatrix on rows and columns indexed by $A$ and $B$, respectively. Single index notation is used for principal submatrices, so that $\Sigma_{A} = \Sigma_{A,\,A}$. For example, $\Sigma_{i,j}$ is the $(i,j)$th element of $\Sigma$ (using the singleton notation), whereas $\Sigma_{ij} = \Sigma_{ij,\,ij}$ is the $2\times 2$ submatrix on $\{i,j\}$ and $\{i,j\}$.

\subsection{Proof of Lemma~\ref{lem:Gram:char:pair}} \label{sec:proof:proj:Sigma}
Let $\Hil$ be the underlying Hilbert space and $x_{i}\in\Hil$ such that $\Sigma = (\ip{x_i,x_j}) \succ 0$. Define the operator  $\Lc_S : \reals^{|S|} \to \Hil$  by $\Lc_S a = \sum_{i \in S} a_i x_i$. The adjoint operator $\Lc_S^* : \Hil \to \reals^{|S|}$ is given by $\Lc^*_S y = (\ip{y,x_i})_{i \in S}$. Since $\Sigma\succ 0$, we have $\Sigma_S = (\ip{x_i,x_j})_{i,j\in S} \succ 0$ for all $S \subset [d]$. By considering $\Sigma_{S}:\reals^{|S|}\to\reals^{|S|}$ as an operator, we have the identification $\Sigma_S = \Lc_S^* \Lc_S$. To simplify notation, let $\Lc_i = \Lc_{\{i\}}$, so that $\Lc_i a = a x_i$ for any $a \in \reals$ and  $\Lc_i^* y = \ip{y,x_i}$ for any $y \in \Hil$. Then, 
\begin{align}\label{eq:PS:in:terms:of:LS}
    P_S = \Lc_S (\Lc_S^* \Lc_S)^{-1} \Lc_S^* = \Lc_S \Sigma_S^{-1} \Lc_S^*.
\end{align} 
(To see~\eqref{eq:PS:in:terms:of:LS}, note that the projection $P_S$ is characterized by the residual $x - P_S x$, for any $x \in \Hil$, being orthogonal to $\ran(P_S) = \ran(\Lc_S)$, that is, $x - P_S x \in [\ran(\Lc_S)]^\perp = \ker(\Lc_S^*)$. Thus, $P_S$ is characterized by $\Lc_S^*(I - P_S) = 0$ which is satisfied by~\eqref{eq:PS:in:terms:of:LS}. Alternatively, write $P_S x = \Lc_S a$ so that $\Lc_S^*( x - \Lc_S a) = 0$ and solve for $a$.) 

\smallskip
It follows that $P_i P_S^\perp P_j = 0$ is equivalent to $\Lc_i \Sigma_i^{-1} \Lc_i^* P_S^\perp \Lc_j \Sigma_j^{-1} \Lc_j^* = 0$. Note that $\Sigma_{i}$ and $\Sigma_j$ are both positive scalars ($i$th and $j$th diagonal entries of $\Sigma$). Hence we can take them out and get the equivalent statement $\Lc_i  \Lc_i^* P_S^\perp \Lc_j \Lc_j^* = 0$. We also note that 
$   \Lc_i  (\Lc_i^* P_S^\perp \Lc_j) \Lc_j^* = (\Lc_i^* P_S^\perp \Lc_j) \Lc_i\Lc_j^*$
since the expression in parentheses is a scalar. $\Lc_i \Lc_j^*$ is not the identically zero operator, hence we get the equivalent statement
\begin{align}
    \begin{split}\label{eq:temp:578}
     0 = \Lc_i^* P_S^{\perp} \Lc_j = \Lc_i^* (I-P_S) \Lc_j  &= 
     \Lc_i^* \Lc_j - \Lc_i^*\Lc_S \Sigma_S^{-1} \Lc_S^*  \Lc_j  \\
     &= \Sigma_{ij} - \Sigma_{S,i}^T \Sigma_S^{-1} \Sigma_{S,j}
    \end{split}
\end{align}
where $\Sigma_{S,i} \in \reals^{|S|}$ is the vector obtained from rows indexed by $S$ in column $i$ of $\Sigma$. By a Schur complement argument, \eqref{eq:temp:578} in turn is equivalent to the determinant of
$
    \big(\begin{smallmatrix}
    \Sigma_{ij} & \Sigma_{S,i}^T \\
    \Sigma_{S,j} & \Sigma_S
    \end{smallmatrix}\big) = \Sigma_{Si,Sj}
$ being zero, as desired.

\subsection{Proof of Proposition~\ref{prop:proj:Sigma:gen}}\label{sec:proof:proj:Sigma:gen}
    Let us start by computing $P_S x_i$ using the notation used in the proof of Lemma~\ref{lem:Gram:char:pair}. Since $x_j = \Lc_j 1$, it is enough to write $P_S \Lc_i = \Lc_S \Sigma_S^{-1} \Lc_S^* \Lc_j = \Lc_S (\Sigma_S^{-1} \Sigma_{S,j})$ (cf. \eqref{eq:PS:in:terms:of:LS}). This means that $\Sigma_S^{-1} \Sigma_{S,j}$ is the coefficient of expansion of $P_A x_j$ in the basis of $\{x_i: i \in S\}$.  That is, $[\beta_j(S)]_S = \Sigma_S^{-1} \Sigma_{S,j}$ in the notation of Definition~\ref{defn:nhbd}. 
    
    We can now obtain an equivalent statement to $P_A P_S^\perp P_B =0$ in terms of the Gram matrix $\Sigma$. The condition is equivalent to $P_A P_S^\perp x_j =0$ for all $j \in B$. This in turn is equivalent to $P_A x_j = P_A P_S x_j $. Since $P_S x_j = \sum_{i \in S} [\beta_j(S)]_i x_i$, we have
    \begin{align*}
        P_A P_S x_j = \sum_{i \in S} [\beta_j(S)]_i (P_A x_i) = 
        \sum_{i \in S} [\beta_j(S)]_i \Big(\sum_{k \in A} [\beta_i(A)]_k x_k\Big).
    \end{align*}
    Using $P_A x_j = \sum_{k \in A} [\beta_j(A)]_k x_k$ and equating the coefficients of $x_k$, we obtain the system of equations given in (b).
    
    To see the equivalence of (b) and (c), let $I_{S,i}$ be the subvector of the identity matrix $I \in \reals^{d \times d}$ on $S \times \{i\}$. Note that we have $[\beta_j(S)]_i = I_{S,i}^T \Sigma_{S}^{-1} \Sigma_{S,j} = \Sigma_{j,S} \Sigma_S^{-1} I_{S,i}$, for all $i \in S$ (in fact true for all $i \in [d]$), where the second equality is by symmetry of $\Sigma$. The RHS of~\eqref{eq:system:of:beta} is equal to 
    \begin{align*}
        \sum_{i \in S} \big(\Sigma_{j,S} \Sigma_S^{-1} I_{S,i}\big) 
        \big(\Sigma_{i,A} \Sigma_A^{-1} I_{A,k} \big) = 
    \Sigma_{j,S} \Sigma_S^{-1} \sum_{i \in S} \big(I_{S,i} 
        \Sigma_{i,A}\big) \Sigma_A^{-1} I_{A,k},
    \end{align*}
    the latter summation being equal to $I_{S,S} \Sigma_{S,A} = \Sigma_{S,A}$. Similarly, the LHS of~\ref{eq:system:of:beta} is equal to $\Sigma_{j,A} \Sigma_A^{-1} I_{A,k}$. Hence,~\ref{eq:system:of:beta} is equivalent to
    \begin{align*}
         \Sigma_{j,A} \Sigma_A^{-1} I_{A,k} =
          \Sigma_{j,S} \Sigma_S^{-1} \Sigma_{S,A} \Sigma_A^{-1} I_{A,k}, \quad \forall k \in A,\; j \in B,
    \end{align*}
    or in matrix form $\Sigma_{B,A} \Sigma_A^{-1} I_{A,A} =
    \Sigma_{B,S} \Sigma_S^{-1} \Sigma_{S,A} \Sigma_A^{-1} I_{A,A}$. Dropping the identity $I_{A,A}$ and rearranging we have $(\Sigma_{B,A}  - \Sigma_{B,S} \Sigma_S^{-1} \Sigma_{S,A}) \Sigma_A^{-1} = 0$ which is equivalent to (c).
    
    Let us now show the equivalence of~\ref{eq:PAPSperpPB} and~\ref{eq:pairwise:beta}. First we note that~\ref{eq:PAPSperpPB} is equivalent to $P_i P_S^\perp P_j =0$ for all $i \in A$ and $j \in B$. Alternatively, it is equivalent to $\ip{x_i, P_S^\perp x_j}_\Hil = 0$ for all $i \in A$ and $j \in B$. Let us fix $i\in A$ and $j \in B$, and show that
    \begin{align*}
        \ip{x_i, P_S^\perp x_j}_\Hil =  0 \iff [\beta_j(Si)]_i = 0
    \end{align*}
    which establishes $\ref{eq:PAPSperpPB}\!\iff\!\ref{eq:pairwise:beta}$. By definition of SEM coefficients in~\eqref{defn:nhbd}, we have $P_{Si}\, x_j = \sum_{k \in Si} [\beta_j(Si)]_k x_k$. To simplify notation, let us write $\alpha =  \beta_j(Si)$ and $u = P_{Si} x_j$. We have
    \begin{align}\label{eq:temp:86969}
        u = \sum_{k \in Si} \alpha_k x_k,\quad x_j = P_{Si}^\perp x_j + u.
    \end{align}
    Applying $P_S^\perp$ to both sides of the first equation, we get $P_S^\perp u = \alpha_i P_S^\perp x_i$. (Since, $P_S^\perp x_k =0,\; \forall k\in S$.) Taking the inner product with $x_i$ (dropping $\Hil$ subscript for simplicity), we obtain
    \begin{align*}
        \alpha_i \ip{x_i,P_S^\perp x_i} &= \ip{x_i,P_S^\perp u} \\
        &= \ip{x_i, P_S^\perp (x_j - P_{Si}^\perp x_j)} \\
        &= \ip{x_i, P_S^\perp x_j} -\ip{x_i, P_{Si}^\perp x_j} &&(\text{Since $P_{Si}^\perp \le P_S^\perp$, i.e. $P_{Si}^\perp   P_S^\perp = P_{Si}^\perp$}) \\
        &= \ip{x_i, P_S^\perp x_j} -\ip{P_{Si}^\perp x_i,  x_j} && (\text{Projections are self-adjoint.})\\
        &= \ip{x_i, P_S^\perp x_j} &&(\text{Since $P_{Si}^\perp x_i=0$.})
    \end{align*}
    Note that $P_{Si}^\perp \le P_S^\perp$ is equivalent to $P_S \le P_{Si}$ which is perhaps easier to see. Since assumption $\Sigma \succ 0$ implies $\ip{x_i,P_S^\perp x_i} > 0$, we have the equivalence of $\alpha_i = 0$ and $\ip{x_i, P_S^\perp x_j} = 0$ which is the desired result. Note that we  have also established~\eqref{eq:explit:pairwise:SEM}.
    The proof is complete.

\subsection{Proof of Lemma~\ref{lem:pairwise:global}}
 \label{sec:proof:pairwise:global}
The idea is to reduce the general case to the case where the underlying random variables are Gaussian, in which case PCG is the same as CIG. The result the follows that for CIGs.

The first step is to show that PCGs are preserved under isometries between Hilbert spaces. Let us recall some facts about isometries: An operator $L \in B(\Hil_1,\Hil_2)$ between two Hilbert spaces is called an isometry if $\vnorm{L \xi}_{\Hil_2} = \vnorm{\xi}_{\Hil_1}$ for all $\xi \in \Hil_1$, i.e. it preserves norms. Equivalently (using polarization identity), it is an isometry if $\ip{L\xi,L\eta}_{\Hil_2} = \ip{\xi,\eta}_{\Hil_1}, \forall \xi,\eta \in \Hil_1$, i.e., it preserves inner products. One has that $L$ is an isometry iff $L^* L = I_{\Hil_1}$. 

\textbf{Step 1} (PCGs are preserved by isometries): Let $\Hil_0 := \Span\{x_1,\dots,x_d\} \subset L^2(\pr)$.
Let $\Kc$ be a $d$-dimensional Hilbert space and let us define the linear operator $V : \Kc \to \Hil_0$ by taking a linearly independent set $\{y_1,\dots,y_d\} \subset \Kc$ with and letting $V y_i = x_i, \forall i \in [d]$. We can further assume that $\{y_i\}$ are chosen such that $\ip{y_i,y_j}_\Kc = \Sigma_{ij} = \ip{x_i,x_j}_\Hil$. It then follows that $V$ is an isometry, hence $V^*V = I_\Kc$. On the other hand, $V$ is also surjective, hence a unitary operator, hence $VV^* = I_{\Hil_0}$.

Let $Q_A = V^* P_A V$. We claim that this is the (orthogonal) projection operator onto the span of $y_A := \{y_i : i \in A\}$.
We have, for $i \in A$,  $Q_A y_i = V^* P_A x_i = V^*x_i = V^* V y_i = y_i$. On the other hand, assume $z \perp_{\Kc} y_i, \forall i \in A$. Then, for all $i \in A$, $ 0 = \ip{z,y_i}_{\Kc} = \ip{Vz,Vy_i}_{\Hil_0} = \ip{Vz,x_i}_{\Hil_0}$. It follows that $P_A V z = 0$, hence $Q_A z=0$.

This shows that any of the pairwise or global $\Hil$-Markov properties are preserved under isometries:
Note that $Q_A^\perp := I_{\Kc} - Q_A = V^*(I_{\Hil_0} - P_A)V = V^* P_A^\perp V$. We conclude that $Q_A Q_S^\perp Q_B = V^* (P_A P_S^\perp P_B) V$ and $P_A P_S^\perp P_B = V (Q_A Q_S^\perp Q_B) V^*$. Hence,
\begin{align}\label{eq:temp:567}
P_A P_S^\perp P_B = 0 \iff Q_A Q_S^\perp Q_B = 0.
\end{align}

\medskip
\textbf{Step 2} (Reduction to Gaussian case): Now take $y = (y_1,\dots,y_d) \sim N(0,\Sigma)$, say on the same probability space, and let $\Kc = \Span\{y_1,\dots,y_d\}$. By~\eqref{eq:temp:567}, and using the assumption that $x$ satisfies pairwise $L^2$ property w.r.t. some $G$, we conclude that $y$ also satisfies pairwise $L^2$ property w.r.t. $G$. Then, due to the equivalence of orthogonality and independence for Gaussian random variables, $y$ satisfies usual pairwise Markov property (defined via conditional independence) w.r.t. $G$. That is, $G$ is a CIG for $y$.

Invoking~\eqref{eq:temp:567} again, it is enough to show that if $A$ and $B$ are separated by $S$, then $Q_A Q_S^\perp Q_B = 0$, which is equivalent to $\ip{u,Q_S^\perp v}_{\Kc} = 0, \forall u \in \Span\{y_A\}, \; v \in \Span\{y_B\}$. Since $S$ separates $A$ and $B$, we know that $u$ and $v$ are independent given $y_S$. This follows from~\cite[Thm.~3.9, p.~35]{Lauritzen1996} for CIGs, since $N(0,\Sigma)$ has a.e. positive density w.r.t. Lebesgue measure, assuming  $\Sigma \succ 0$.  In particular, 
\begin{align}\label{eq:temp:345}
\ex[u v | y_S] = \ex[u|y_S] \,\ex[v|y_S] = (Q_S u)(Q_S v)
\end{align}
where the second equality uses Gaussianty again to write $\ex[u|y_S]$ as the $L^2$ projection to the linear span of $y_S$.
Taking expectation on both sides of~\eqref{eq:temp:345}, we have $\ip{u,v}_\Kc = \ip{Q_S u,Q_S v}_\Kc$ from which it follows that $\ip{Q_S^\perp u,Q_S^\perp v}_\Kc=0$, which in turn is equivalent to $\ip{ u,Q_S^\perp v}_\Kc=0$ since $Q_S^\perp$ is self-adjoint and idempotent.

\subsection{Equivalence of the two directed PCG definitions}\label{sec:dPCG:equiv}
In this appendix, we show that the two definitions of a directed PCG given in~\eqref{eq:dPCG:def1} and~\eqref{eq:dPCG:def2} are equivalent (under the acyclicity assumption). Recall that $\pa_j$ and $N_j$ denote, respectively, the sets of parents and non-descendants of node $j$ in the underlying graph $G$. 

 Assume first that~\eqref{eq:dPCG:def1} holds and take $i,j \in [d]$. Since $G$ is a DAG, one of $i$ and $j$ is a non-descendant of the other, say $i \in N_j$. (If $i$ and $j$ are adjacent, this means that $i$ is a parent of $j$.) Then, any parent of $i$ will be a non-descendant of $j$, that is, $\pa_i \subset N_j$. It follows that $P_k P_{\pa_j}^\perp P_j = 0$ for $k \in \{i\} \cup \pa_i$. In particular, $P_{\pa_i} P_{\pa_j}^\perp P_j = 0$ since $P_{\pa_i} = \bigvee_{k \in \pa_i} P_k$ (see Lemma~\ref{lem:sup:P:Q}), and $P_i P_{\pa_j}^\perp P_j = 0$. Since we have 
 \begin{align}
    \begin{split}\label{eq:dPCG:equiv:identity}
    P_i P_{\pa_j}^\perp P_j 
    &= P_i(P_{\pa_i} + P_{\pa_i}^\perp) P_{\pa_j}^\perp P_j \\
    &= P_i P_{\pa_i} P_{\pa_j}^\perp P_j + P_i P_{\pa_i}^\perp P_{\pa_j}^\perp P_j  \\
    &\stackrel{(c)}{=} P_i P_{\pa_i}^\perp P_{\pa_j}^\perp P_j,
    \end{split}
 \end{align}
 we obtain  $P_i P_{\pa_i}^\perp P_{\pa_j}^\perp P_j = 0$, that is~\eqref{eq:dPCG:def2} holds for $i$ and $j$.

 Now assume that~\eqref{eq:dPCG:def2} holds and take $j \in [d]$ and $i \in N_j$. Note that it is enough to establish identity~\eqref{eq:dPCG:equiv:identity}.  An ancestor of $i$ is any node that has a directed path to $i$; let us call such a path, a path \emph{from the top} to $i$. Let the depth of a node $i$ be the length of the longest path from the top to it. We proceed by the induction on the depth of node $i$. For depth zero, we have $\pa_i = \emptyset$, hence $P_{\pa_i} = 0$ and~\eqref{eq:dPCG:equiv:identity} holds trivially. Now assume that we have the result for all non-descendants of $j$ at depth at most $r-1$, and let $i \in N_j$ have depth $r$. Any $k \in \pa_i$ is at depth at most $r-1$ and belongs to $N_j$, hence by the induction hypothesis $P_k P_{\pa_j}^\perp P_j = 0$ for $ k\in \pa_j$. It follows that $P_{\pa_i} P_{\pa_j}^\perp P_j = 0$, hence~\eqref{eq:dPCG:equiv:identity}(c) holds which then means that \eqref{eq:dPCG:equiv:identity} holds as a whole. The proof is complete.

\section{Abstract lattice theorem}\label{sec:abs:lattice}

In this appendix, we present an interesting generalization of the lattice Theorem~\ref{thm:lattice}. The result and its argument are due to Tristan Bice~\citep{Tristan16}.
Let $A$ be a von Neumann algebra and $P$ be its projection lattice, ordered by $p \le q \iff p = pq \iff p = qp$, the latter equivalence being a result of the self-adjointness of $p$ and $q$. The convention is to use lower case letters for elements of $A$ and $P$. Note that a general element $b \in A$ is a bounded operator on the underlying Hilbert space, i.e., $b \in B(\Hil)$. Similarly $p \in P$ is a projection operator in $B(\Hil)$.
 
\begin{thm}
     $Q = \{q \in P:\, p a = q a\}$ is a complete sublattice of $P$,  $\forall a \in A$ and $p \in P$.
 \end{thm}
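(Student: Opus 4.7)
The plan is to mirror the proof of Theorem~\ref{thm:lattice}, with the element $a \in A$ playing the role of $x_j$ and the fixed projection $p \in P$ playing the role of $P_S$. The pivotal object will be the projection $r \in P$ onto the closed range of $b := pa$. Since $\ran(b) \subset \ran(p)$ we have $r \le p$; and for any $q \in Q$, the identity $qb = q(qa) = qa = b$ shows that $b$ lies in $\ran(q)$, hence $r \le q$. In particular $rb = b$, and $qb = b$ for every $q \in Q$.

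For closure under arbitrary meets, I would take $\{q_i\}_{i \in I} \subset Q$ and set $q := \bigwedge_i q_i$. Since $r \le q_i$ for every $i$, we get $r \le q$, and therefore $qb = q(rb) = (qr)b = rb = b$. Picking any $i_0 \in I$, the relation $q \le q_{i_0}$ gives $qq_{i_0} = q$, so
\begin{align*}
qa = qq_{i_0} a = q(q_{i_0} a) = qb = b = pa,
\end{align*}
which shows $q \in Q$.

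For closure under arbitrary joins, put $q := \bigvee_i q_i$ and study the defect $c := qa - b$. For each $i$ the relations $q_i q = q_i$ (since $q_i \le q$) and $q_i b = b$ yield
\begin{align*}
q_i c = q_i q a - q_i b = q_i a - b = b - b = 0.
\end{align*}
Hence $\ran(c) \subset \bigcap_i \ker(q_i) = \ker\!\bigl(\bigvee_i q_i\bigr) = \ker(q)$, which is the statement $qc = 0$. On the other hand, since $r \le q_i \le q$ we again have $qb = b$, so $qc = qa - qb = qa - b = c$. Combining gives $c = 0$, i.e.\ $qa = pa$, so $q \in Q$.

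The main obstacle, as in Theorem~\ref{thm:lattice}, will be the join step: one must argue that the join ``sees enough'' of $a$ to reproduce $pa$, even though each individual $q_i$ only knows how to reconstruct $b$ from $a$ on its own range. The trick is that the defect $c$ is annihilated by every $q_i$, so $\ran(c)$ lies in the common kernel, which by the De~Morgan identity for projections equals $\ker(q)$; the identity $qc = c$, available because $qb = b$, then collapses $c$ to zero. The whole argument uses only that the projection lattice of $A$ is complete and that meets and joins correspond to intersections and closed linear spans of ranges, both standard in any von Neumann algebra, so no extra hypothesis on $A$ is required.
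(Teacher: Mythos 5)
Your proof is correct. The meet half is essentially the paper's argument: both hinge on the observation that the range projection $r=[pa]$ satisfies $r\le q$ for every $q\in Q$ and that $rb=b$ forces the infimum to reproduce $pa$. Where you diverge is the join half. The paper dispatches it in one line by duality: since $pa=qa\iff p^\perp a=q^\perp a$, the set $Q^\perp=\{q^\perp:q\in Q\}$ has the same form and is therefore closed under infima, and order reversal of $q\mapsto q^\perp$ converts this into closure of $Q$ under suprema. You instead run a direct computation on the defect $c=qa-b$, showing $q_ic=0$ for all $i$ (so $\ran(c)\subset\bigcap_i\ker(q_i)=\ker(q)$ by the De Morgan identity for projection ranges) while $qc=c$ (from $qb=b$), forcing $c=0$. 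Both are valid; the paper's complementation trick is shorter and purely order-theoretic, while your argument is more explicit about \emph{why} the join succeeds --- the part of $qa$ not already accounted for by $b$ is annihilated by every $q_i$ and hence orthogonal to $\ran(q)$, yet fixed by $q$. One cosmetic remark: the phrase ``$b$ lies in $\ran(q)$'' should read ``$\ran(b)\subset\ran(q)$,'' which is what $qb=b$ actually gives and what you use to conclude $r\le q$; and, as in the paper, the closure claims should be understood for nonempty families, since the empty meet and join ($1$ and $0$) need not belong to $Q$.
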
 
\begin{proof}
    
    Let $[b]$ be the range projection of any $b \in A$, i.e., projection onto the closure of the range of $b$. For any $q \in P$ and $a \in A$, we have $[qa] \le q$ (since the range of $qa$ is included in the range of $q$). Also note the identity (2) $b = [b] b, \; \forall b \in A$.

    If $R \subset Q$, then for all $q \in R$, we have (1) $[pa] = [qa] \le q$, hence $[pa]$ is lower bound on $R$. Letting $r := \bigwedge R$, by definition of infimum, $ [pa] \le r \le q, \; \forall q \in R$, hence $[qa] \le r \le q, \forall q \in R$ by (1).  Hence,
    \begin{align*}
        ra &= r qa & (\text{By} \; r \le q \iff r = rq) \\
        & = r[qa] qa & (\text{By (2) with $b = qa$}) \\
        & = [qa] qa & (\text{By}\; [qa] \le r \iff [qa] = r [qa]) \\
        & = qa & (\text{By (2) with $b = qa$)}\\
        &= pa,
    \end{align*}
    showing that $r \in Q$. So, $Q$ is closed under infima. 

    Since $pa = qa \iff p^\perp a = q^\perp a$  (recall $p^\perp = 1 - p$), it follows that $Q^\perp := \{q^\perp :\; q \in Q\}$ is also closed under infima. But since $q^\perp \le p^\perp \iff p \le q$, it follows that $Q$ is closed under suprema, finishing the proof.
\end{proof}

\end{document}